\numberwithin{equation}{section}
\newcommand{\be}{\begin{equation}}
\newcommand{\ee}{\end{equation}}
\newcommand{\benn}{\begin{equation*}}
\newcommand{\eenn}{\end{equation*}}
\newcommand{\bea}{\begin{eqnarray}}
\newcommand{\eea}{\end{eqnarray}}
\newcommand{\beann}{\begin{eqnarray*}}
\newcommand{\eeann}{\end{eqnarray*}}
\newtheorem{theorem}{Theorem}[section]
\newtheorem{corollary}[theorem]{Corollary}
\newtheorem{lemma}[theorem]{Lemma}
\newtheorem{definition}[theorem]{Definition}
\newtheorem{remark}[theorem]{Remark}
\newtheorem{example}[theorem]{Example}
\newtheorem{assumptions}[theorem]{Assumptions}
\newcommand{\qed}{\hfill $\Box$\smallskip}
\def\R{\mathbb{R}}
\def\N{\mathbb{N}}
\def\P{\mathbb{P}}
\newcommand{\B}{\mathbb{B}} 
\newcommand{\W}{\mathbf{W}}
\newcommand{\WW}{\mathbb{W}}
\newcommand{\gubnorm}[2]{\left\|#1,#2 \right\|_{W,2\gamma,\alpha}}
\newcommand{\gubnormpar}[3]{\left\|#1,#2 \right\|_{W,2\gamma,#3}}
\def\cB{\mathcal{B}}
\def\cC{\mathcal{C}}
\def\cD{\mathcal{D}}
\def\cF{\mathcal{F}}
\def\cH{\mathcal{H}}
\def\cL{\mathcal{L}}
\def\cM{\mathcal{M}}
\def\cO{\mathcal{O}}
\def\cR{\mathcal{R}}
\def\cX{\mathcal{X}}
\def\txtc{{\textnormal{c}}}
\def\txtd{{\textnormal{d}}}
\def\txte{{\textnormal{e}}}
\def\txts{{\textnormal{s}}}
\def\txtD{{\textnormal{D}}}
\def\Id{{\textnormal{Id}}}
\def\ra{\rightarrow}
\def\I{\infty}
\title{Center Manifolds for Rough Partial Differential Equations}
\author{Christian Kuehn\thanks{Technical University of Munich, 
		Department of Mathematics, 85748 Garching bei M\"unchen, Germany. E-Mail: ckuehn@ma.tum.de}~~and~Alexandra 
	Neam\c tu\thanks{University of Konstanz, Department of Mathematics and Statistics, 
		Universit\"atsstra\ss{}e 10 78464 Konstanz, Germany. E-Mail: alexandra.neamtu@uni-konstanz.de}}
\begin{document}

\maketitle

\begin{abstract}
We prove a center manifold theorem for rough partial differential equations (rough PDEs). The class of rough PDEs we consider contains as a key subclass reaction-diffusion equations driven by nonlinear multiplicative noise, where the stochastic forcing is given by a $\gamma$-H\"older rough path, for $\gamma\in(1/3,1/2]$. Our proof technique relies upon the theory of rough paths and analytic semigroups in combination with a discretized Lyapunov-Perron-type method in a suitable scale of interpolation spaces. The resulting center manifold is a random manifold in the sense of the theory of random dynamical systems (RDS). We also illustrate our main theorem for reaction-diffusion equations as well as for the Swift-Hohenberg equation.
\end{abstract}

\textbf{Keywords:} center manifold, rough path, evolution equation, interpolation spaces, Lyapunov-Perron method.\medskip

\textbf{Mathematics Subject Classification (2020)}: 60H15, 60G22, 60L20, 60L50, 37L55.

\section{Introduction}
\label{sec:intro}

Center manifolds, as well as the easier stable and unstable manifolds, are key technical tools in dynamical systems theory~\cite{GH}. The idea is to split the dynamics into exponentially attracting, exponentially repelling and neutral directions near a steady state. This splitting can often be obtained locally on the level of a linearized system. If the linearized operator has no spectrum on the imaginary axis then the steady state is called hyperbolic. In the hyperbolic situation, quite classical stable and unstable manifold theory as well as local topological equivalence between the linearized and the nonlinear system exist for many classes of evolution equations~\cite{GH,Henry,BatesJones}. In the non-hyperbolic situation, when spectrum on the imaginary axis appears, we actually need more involved center manifold theory~\cite{Carr}. Although this situation may appear non-generic at first, it is well-understood that it is generic in differential equations with parameters, where it is of crucial importance to obtain center manifolds to study bifurcation problems~\cite{GH,Kuznetsov}. Furthermore, center manifold theory can yield effective dimension reduction near a steady state if there are only attracting and center directions, which is a concept that can be extended to entire manifolds of steady states, e.g., in the context of slow manifolds for multiple time scale systems~\cite{Fenichel,Kuehn}.\medskip

For stochastic differential equations, there already some results regarding center manifolds, see~\cite{Arnold,Boxler1,Boxler2,Roberts} for stochastic ordinary differential equations (SODEs) and~\cite{ChenRobertsDuan,ChekrounLiuWang, DuanWang,BloemkerWang} for SPDEs. However, stochastic center manifold theory is still far less well-developed in comparison to deterministic ordinary differential equations (ODEs) or partial differential equations (PDEs). The aim of this work is to investigate center manifolds for semilinear rough evolution equations, where our main motivation arises from semilinear reaction-diffusion SPDEs, which are included as a particular subclass in our results. Our work here extends our earlier results obtained in the finite-dimensional setting established in~\cite{KN}. Moreover we emphasize that these results yield the existence of center manifolds in suitable interpolation spaces, which naturally arise in the context of parabolic PDEs. Therefore, this abstract framework is not restricted to Hilbert spaces. In summary, this work allows us to substantially extend the theory developed in~\cite{KN} as well as the invariant manifold theory for SPDEs~\cite{DuanLuSchmalfuss, GarridoLuSchmalfuss,CaraballoDuanLuSchmalfuss,ChenRobertsDuan, Neamtu}. \medskip 

There are several major technical difficulties one encounters, when trying to establish center manifold results for stochastic partial differential equations (SPDEs). A first conceptual difficulty is to employ the concept of random dynamical systems (RDS)~\cite{Arnold} for SPDEs. It is well-known that an It\^{o}-type SODE generates an RDS under reasonable assumptions~\cite{Arnold,Kunita,Scheutzow}. However, the generation of an RDS from an It\^{o}-type SPDE has been a long-standing open problem, mostly since Kolmogorov's theorem breaks down for random fields parametrized by infinite-dimensional Hilbert spaces~\cite{Mohammed}. As a consequence it is not trivial, how to obtain a RDS from a general SPDE. This problem was fully solved only under very restrictive assumptions on the structure of the noise driving the SPDE.~For instance, if one deals with purely additive noise or certain particular multiplicative Stratonovich noise, there are standard transformations which reduce the SPDE to a random PDE.~Since this random PDE can be solved pathwise it is straightforward to obtain an RDS. However, for nonlinear multiplicative noise, this technique is no longer applicable, not even if the random input is a Brownian motion.~As a consequence of this issue, dynamical aspects for SPDEs, e.g.~invariant manifolds, have not been investigated in their full generality. In the finite-dimensional case there are results concerning invariant manifolds for delay equations using rough paths~\cite{Riedel1, Riedel2} and center manifolds in~\cite{KN}. Rough path techniques provide a very natural framework to obtain RDS from SPDEs driven by  general multiplicative noise since the usual problems with the nullsets do not appear in a pathwise approach. For instance, there are results regarding the existence of random dynamical systems generated by rough PDEs
with transport~\cite{Hofmanova1,Hofmanova2}, nonlinear multiplicative~\cite{HesseNeamtu2} and nonlinear conservative noise~\cite{FehrmanGess}.  In this work we go beyond the of existence of RDS in the infinite-dimensional setting and establish a center manifold theorem (Theorem~\ref{manifold}).  \\

The second main, more technical, obstacle we encounter is due to the fact that one wants to include the case, when the analytic semigroup $(S_t)_{t\geq 0}$ generated by the linear part of the SPDE is no longer H\"older continuous in zero. However, this regularity is required in order to introduce the rough convolution and to obtain an expansion of the solution in terms of H\"older-continuous functions. There are several approaches to deal with this problem~\cite{Gubinelli,GubinelliLejayTindel,GubinelliTindel}, or to work with modified H\"older spaces which compensate the time-singularity in zero, as in~\cite{HesseNeamtu1}, or to consider more space-regularity in order to compensate the missing time-regularity as in~\cite{GHairer}. 
More precisely,  in order to define the rough convolution
$\int_0^t S(t-s)Y_s~\txtd \W_s$ with respect to a $\gamma$-H\"older rough path $\W=(W,\mathbb{W})$, one needs the notion of a controlled rough path~\cite{Gubinelli}, which is a pair $(Y,Y')$ of $\gamma$-H\"older continuous functions
satisfying an abstract Taylor-like expansion in terms of H\"older regularity
given by
\begin{align*}
Y_t= Y_s+Y'_s W_{s,t}+ R^{Y}_{s,t},
\end{align*}
where the remainder $R^Y_{s,t}$ is supposed to be $2\gamma$-H\"older-regular. Due to the lack of regularity of the semigroup $(S_t)_{t\geq 0}$ in zero, it is a challenging task to find an appropriate meaning of a controlled rough path. The main idea is to consider controlled rough paths on a scale of Banach spaces $(\cB_\alpha)_{\alpha\in\mathbb{R}}$ satisfying the following interpolation inequality which means that
\begin{align*}
|x|^{\alpha_3-\alpha_1}_{\alpha_2} \lesssim |x|^{\alpha_3-\alpha_2}_{\alpha_1} |x|^{\alpha_2-\alpha_1}_{\alpha_3}
\end{align*}
 for $\alpha_1\leq \alpha_2\leq \alpha_3$ and $x\in  \cB_{\alpha_3}$.
The advantage of this approach is that it allows one to view the semigroup as a bounded operator on all these spaces and exploit space-time regularity specific to the parabolic setting. Such an approach was exploited in~\cite{GHN} in the context of non-autonomous rough PDEs and in~\cite{GHairer},
where the semigroup was directly incorporated in the definition of the controlled rough path. Following the approach of controlled rough paths in interpolation spaces, we manage to prove the existence of center manifolds for parabolic rough PDEs based on the Lyapunov-Perron method. This is the key analytical contribution of this work.\\

The paper is structured as follows: In Section~\ref{heuristics}, we provide an overview regarding our setting, compare it formally to an existing approach, and we motivate how to set up the iteration procedure to construct the center manifold. In Section~\ref{preliminaries} we collect preliminaries concerning evolution equations and controlled rough paths. In Section~\ref{sect:est:sol}, we prove a-priori estimates for the solution of rough evolution equations, which will be key components to justify the existence of a fixed point for the Lyapunov-Perron method. In Section~\ref{sect:rds}, we present the background from RDS, construct suitable random cocycles, and define random center invariant manifolds. In Section~\ref{lcm}, we finally set up a discrete Lyapunov-Perron method and prove the existence of a center manifold. We present some examples in Section~\ref{appl}.\medskip

\textbf{Acknowledgments:} CK acknowledges support by a Lichtenberg 
Professorship.   AN thanks Felix Hummel for helpful discussions regarding interpolation spaces.

\section{Heuristic Overview}
\label{heuristics}

This section provides an overview of the existing foundations for center manifold theory for stochastic partial differential equations and it describes the main goal and the strategy of this work.

\subsection{A Classical Construction}
\label{sec:classical}

In the context of random invariant sets, certain classes of SPDEs have been studied in the literature; see~\cite{Arnold,DuanLuSchmalfuss,ChekrounLiuWang,ChenRobertsDuan} and the references therein. We recall the main ideas that have been used so far in proving the existence of random center manifolds for such SPDEs, given by
\begin{equation}
\label{eqstrat}
\begin{cases}
\txtd u = (A u + f (u)) ~\txtd t + u \circ \txtd \tilde{B}_{t}\\
u(0)=\xi
\end{cases}
\end{equation}
on a separable Hilbert space $H$. Here the linear operator $A$ generates a $C_{0}$-semigroup $(S_t)_{t\geq 0}$ on $H$, $f$ is a locally Lipschitz nonlinear term with $f(0)=0=f'(0)$, and $\tilde{B}$ denotes a two-sided real-valued Brownian motion; note that $f(0)=0$ ensures that $u=0$ is a steady state of~\eqref{eqstrat}. Suppose the spectrum of the linear operator $A$ consists of finitely many eigenvalues with zero real part, and all other eigenvalues have strictly negative real parts, i.e.~$\sigma(A)=\sigma^{\txtc}(A)\cup \sigma^{\txts}(A)$, where $\sigma^{\txtc}(A)=\{\lambda \in \sigma(A)\mbox{ : } \mbox{Re}(\lambda)=0\}$ and $\sigma^{\txts}(A)=\{\lambda\in \sigma(A) \mbox{ : } \mbox{Re}(\lambda)<0 \}$. The subspaces generated by the eigenvectors corresponding to these eigenvalues are denoted by $H^{\txtc}$ respectively $H^{\txts}$ and are referred to as {\em center} and {\em stable} subspace. These subspaces provide an invariant splitting of $H=H^{\txtc}\oplus H^{\txts}$.
We denote the restrictions of $A$ on $H^{\txtc}$ and $H^{\txts}$ by $A_{\txtc}:=A|_{\cH^{\txtc}}$ and $A_{\txts}:=A|_{\cH^{\txts}}$. Since $H^{\txtc}$ is finite-dimensional we obtain that $S^{\txtc}(t):=\txte^{tA_{\txtc}}$ is a {\em group} of linear operators on $H^{\txtc}$. Moreover, there exist projections $P^{\txtc}$ and $P^{\txts}$ such that $P^{\txtc} + P^{\txts} = \mbox{Id}_{H}$ and $A_{\txtc} =A|_{\cR(P^{\txtc})}$ and $A_{\txts}=A|_{\cR(P^{\txts})}$, where $\cR$ denotes the range of the corresponding projection. Additionally, we impose the following dichotomy condition on the semigroup. We assume that there exist two exponents $\gamma$ and $\beta$ with $-\beta^*<0\leq \gamma^*<\beta^*$ and constants $M_{\txtc},M_{\txts}\geq 1$, such that
\begin{align}
&\|S^{\txtc}(t)  x\|_{H} \leq M_{\txtc} \txte^{{\gamma^*} t} \|x\|_{H}, 
~~~\mbox{  for } t\leq 0 \mbox{ and } x\in H;\label{gamma:h}\\
& \|S^{\txts}(t) x\|_{H} \leq M_{\txts} \txte^{-{\beta^*} t} \|x\|_{H}, \label{beta:h}
~~\mbox{for } t\geq 0 \mbox{ and } x\in H.
\end{align}
Furthermore, we introduce the stationary Ornstein-Uhlenbeck process, i.e.~the stationary solution of the Langevin equation
$$ \txtd z_t =-z~\txtd t + \txtd \tilde{B}_t,$$
which is given by
$$ z(\theta_{t}\tilde{B})=\int\limits_{-\infty}^{t} \txte^{-(t-s)}~\txtd\tilde{B}_s=\int\limits_{-\infty}^{0} \txte^{s}~\txtd\theta_{t}\tilde{B}_{s}.$$
Here $\theta$ denotes the usual Wiener-shift, i.e.~$\theta_{t}{\tilde{B}}_{s}:=\tilde{B}_{t+s} -\tilde{B}_{t}$ for $s,t\in\mathbb{R}$. In this case, using the Doss-Sussmann transformation $u^{*}:=u\txte^{-z(\tilde{B})}$, the SPDE~\eqref{eqstrat} reduces to the non-autonomous random differential equation
\begin{equation}\label{ou}
\txtd u =( A u + z(\theta_{t}{\tilde{B}}) u + g (\theta_{t}{\tilde{B}},u) )~\txtd t,
\end{equation}
where we dropped the $*$-notation and set $g(\tilde{B},u):=\txte^{-z(\tilde{B})}f(\txte^{z(\tilde{B})}u)$. Note that no stochastic integrals appear in~\eqref{ou} and one can prove the existence of center manifolds for~\eqref{ou} almost like in the deterministic setting, using the Lyapunov-Perron method. More precisely, one infers that the continuous-time Lyapunov-Perron transform for~\eqref{ou} is given by
\begin{align}
\label{lpeinfach}
J({\tilde{B}},u,\xi)[t] & := S^{\txtc}_t \txte^{\int\limits_{0}^{t} 
	z(\theta_{\tau}{\tilde{B}}) ~\txtd \tau} P^{\txtc}\xi   
+ \int\limits_{0}^{t} S^{\txtc}_{t-r}  \txte^{\int\limits_{r}^{t} 
	z(\theta_{\tau}{\tilde{B}}) ~\txtd \tau} P^{\txtc} 
g(\theta_{r}{\tilde{B}}, u(r))~\txtd r\nonumber\\
& +\int\limits_{-\infty}^{t} S^{\txts}_{t-r} \txte^{\int\limits_{r}^{t} 
	z(\theta_{\tau}{\tilde{B}}) ~\txtd \tau} P^{\txts} g(\theta_{r}{\tilde{B}}, 
u(r))~\txtd r.
\end{align}
Further details regarding this operator can be found in~\cite{WanngDuan},~\cite[Sec.~6.2.2]{DuanWang},~\cite[Ch.4]{ChekrounLiuWang} and the references specified therein. The next natural step is to show that~\eqref{lpeinfach} possesses a fixed-point in a certain function space. One possible choice turns out to be $BC^{\eta,z}(\mathbb{R}^{-};H)$, see~\cite[p.~156]{DuanWang}. This space is defined as
\benn
BC^{\eta,z}(\mathbb{R}^{-};H):=\left\{u:\mathbb{R}^{-}\to H, 
~u ~\mbox{is continuous and } \sup\limits_{t\leq 0}\txte^{-\eta t 
	-\int\limits_{0}^{t}z(\theta_{\tau}{\tilde{B}})~\txtd \tau } 
\|u(t)\|_{H}<\infty\right\}
\eenn
and is endowed with the norm
\begin{align*}
||u||_{BC^{\eta,z}} := \sup\limits_{t\leq 0}~\txte^{-\eta t 
	-\int\limits_{0}^{t}z(\theta_{\tau}{\tilde{B}})~\txtd \tau }\|u(t)\|_{H}.
\end{align*}
Here $\eta$ is determined from~\eqref{gamma:h} and~\eqref{beta:h}, namely one has $-\beta^*<\eta<0$. Note that the previous expressions are well-defined since 
\benn
\lim\limits_{t\to \pm\infty}\frac{|z(\theta_t{\tilde{B}})|}{|t|}=0, 
\eenn
according to~\cite[Lem.~2.1]{DuanLuSchmalfuss} and the references specified therein. Under a suitable smallness assumption on the Lipschitz constant of $f$ (gap condition) one can show that $J$ possesses a fixed-point $\Gamma(\cdot,\tilde{B},\xi)$ for $\xi\in H^{\txtc}$. Since a global Lipschitz condition on $f$ is quite restrictive in applications, one usually introduces a cut-off function to truncate the nonlinearity outside a random ball around the origin. This fixed-point characterizes the random center manifold $M^{\txtc}(\tilde{B})$ for~\eqref{ou}. More precisely, one can show that $M^{\txtc} (\tilde{B})$ can be represented by the graph of a function $h^{\txtc} (\tilde{B},\cdot)$, where $h^{\txtc} (\tilde{B},\xi)=P^{\txts}\Gamma(0,\tilde{B},\xi)$, i.e.
\begin{align}\label{h:strat}
h^\txtc (\tilde{B},\xi) = \int\limits_{-\infty}^{0} S^{\txts}_{-\tau} \txte^{\int\limits_{\tau}^{0}z(\theta_{r}\tilde{B})~\txtd r} P^{\txts} g (\theta_{\tau}\tilde{B},\Gamma(\tau,\tilde{B},\xi))~\txtd \tau,~~\mbox{ for  } \xi \in H^{\txtc} \cap B(0,\rho(\tilde{B})).
\end{align}
Here $B(0,\rho(\tilde{B}))$ denotes a random neighborhood of the origin, i.e.~the radius $\rho(\tilde{B})$ depends on the intensity/magnitude of the noise.

\begin{example}
\label{ex:cm:strat}
We now illustrate via a computational example, how the theory briefly introduced above can be applied. Let $a$ and $\sigma$ stand for two positive parameters and consider the reaction-difussion SPDE on $H:=L^{2}(0,\pi)$
\begin{align}\label{ex1}
\begin{cases}
	\txtd u = (\Delta u + u - a u^{3}) ~ dt + \sigma u \circ \txtd \tilde{B}_{t} \\
	u(0,t)=u(\pi,t)=0, ~\mbox{for } t \geq 0\\
	u(x,0)=u_{0}(x),~~~~~~~\mbox{for  } x\in(0,\pi).
\end{cases}
\end{align}
 Substituting $u:=u^{*}\txte^{\sigma z(\tilde{B})}$ and dropping the $*$-notation, we obtain the non-autonomous PDE with random coefficients
\begin{align}\label{random:pde}
\frac{\partial u}{\partial t} = \Delta u + u + \sigma z(\theta_{t}\tilde{B}) u - a \txte^{2\sigma z (\theta_{t}\tilde{B})}u^{3}.
\end{align}
The spectrum of $$A u:=\Delta u + u$$ with domain $D(A)=H^2(0,\pi)\cap H^{1}_{0}(0,\pi)$ is given by $\{1-n^{2}\mbox{ : } n\geq 1\}$ with corresponding eigenvectors $\{\sin(nx)\mbox{ : } n\geq 1\}$. The eigenvectors give us the center subspace $H^{\txtc} =\mbox{span}\{\sin x\}$ and the stable one  $H^{s}=\mbox{span}\{\sin(nx)\mbox{ : } n\geq 2\}$. Based upon the previous considerations, one can infer that~\eqref{ex1} has a local center manifold
$$M^{\txtc} (\tilde{B}) =\{b\sin x + h^{\txtc}(\tilde{B}, b \sin x)\}=\Bigg\{ b\sin x +\sum\limits_{n=2}^{\infty} c_{n}(\tilde{B},b)\sin (nx) \Bigg\}.$$
In this case, it is also possible to derive suitable approximation results for $h^{\txtc}$, namely one can show that $c_{n}(\tilde{B},b)=\mathcal{O}(b^{3})$ as $b\to 0$. Plugging this in~\eqref{random:pde} gives us a {\em non-autonomous} random ODE on the {\em center manifold}
\begin{align*}
\frac{\txtd b}{\txtd t} = \sigma z(\theta_{t}\tilde{B}) b - \frac{3}{4} a b^3 \txte^{2\sigma z (\theta_{t}\tilde{B})}+  \cO(b^{5}).
\end{align*}
Since $-u$ is also a solution for~\eqref{random:pde} we have that $c_{n}(\tilde{B},b)=0$ for $n$ even. Therefore, one has the following approximation of $h$
\begin{align*}
	h^{\txtc}(\tilde{B}, b\sin x) = c_{3}(\tilde{B},b)\sin 3x + \cO (b^{5}).
\end{align*}
\end{example}

\subsection{Our Goal}

The approach presented in the previous section is somewhat limited in applicability due to use of a Doss-Sussmann transformation to a non-autonomous random PDE. The main goal of this work is to extend center manifold theory for SPDEs to equations driven by rough noise and to recover the Stratonovich case mentioned above as a special case. More precisely, we aim to obtain center manifolds for equations of the form
\begin{align}\label{h:aim}
\begin{cases}
\txtd u = (A u + F(u))~\txtd t + G(u)~\txtd W,\\
u(0)=\xi,
\end{cases}
\end{align}
where $G$ is {\em nonlinear} and the noisy input $W$ is supposed to be more \emph{irregular} than a Brownian motion, i.e.~$W\in C^{\gamma}$ for $\gamma\in(1/3,1/2)$. This includes Brownian motion but applies to a much wider class of Gaussian processes.~Examples for $G$ are polynomials with smooth coefficients (see Section~\ref{appl}), or integral operators with a smooth kernel as discussed in~\cite[Section~7]{HesseNeamtu1}. As a first step we have to rigorously prove the existence of center manifolds for~\eqref{h:aim}. Further works will be devoted to approximation results and to related problems in bifurcation theory, see for example~\cite{B1,B2}.\\

In contrast to the previous technique in Section~\ref{sec:classical}, we are \emph{not going to transform}~\eqref{h:aim} to a random PDE, but we are going to \emph{work directly} with its mild solution
\begin{align}\label{h:mild}
u_{t} =S_t \xi + \int\limits_{0}^{t} S_{t-s}F(u_{s})~\txtd s + \int\limits_{0}^{t} S_{t-s} G(u_{s})~\txtd W_{s}.
\end{align}
This is possible due to the {\em pathwise construction} of the stochastic integral in~\eqref{h:mild}. The Lyapunov-Perron map in this case is given by 
\begin{align*}
J(W,u,\xi)[t]: &= S^{\txtc}_t P^{\txtc} \xi + \int\limits_{0}^{t} S^{\txtc}_{t-r} P^{\txtc} F(u_{r})~\txtd r + \int\limits_{0}^{t} S^{\txtc}_{t-r} P^{\txtc} G(u_r)~\txtd W_{r}\\& +\int\limits_{-\infty}^{t} S^{\txts}_{t-r} P^{\txts} F (u_{r})~\txtd r + \int\limits_{-\infty}^{t}S^{\txts}_{t-r}P^{\txts} G(u_{r})~\txtd W_{r}.
\end{align*}
Due to the stochastic integrals appearing above, the technical challenge consists in finding an appropriate framework to formulate the fixed-point problem for $J$. After this is established, one should intuitively be able to show that the fixed-point $\Gamma$ of $J$ characterizes the local center manifold. More precisely, the random manifold should have a graph structure, where the function $h^{\txtc}=P^{\txts}\Gamma(0,W,\xi)$ is given by
\begin{align*}
h^{\txtc}(W,\xi) = \int\limits_{-\infty}^{0} S^{s}_{-\tau} P^{\txts} F(\Gamma(\tau, W,\xi))~\txtd \tau + \int\limits_{-\infty}^{0}S^{\txts}_{-\tau} P^{\txts} G (\Gamma(\tau,W,\xi))~\txtd W_{\tau},
\end{align*}
for $\xi \in H^{\txtc}\cap B(0,\rho(W))$ similarly to~\eqref{h:strat}. Naturally, the size of the random ball should depend on the growth of the nonlinear terms $F$ and $G$ and on the random input $W$. The following sections provide the necessary tools and rigorously prove these heuristic considerations. Moreover, the center manifold theory developed in this work is applicable to rough PDEs in interpolation spaces and it is not restricted to the Hilbert space-valued setting.


\section{Preliminaries}
\label{preliminaries}

We fix $T>0$, let $\alpha\in\R$ and consider on a scale of Banach spaces $\cB_\alpha$ the equation 
\begin{equation}~\label{rpde}
\begin{cases}
\txtd Y_t= A Y_t ~\txtd t + F(Y_t)~\txtd t + G(Y_t)~\txtd \textbf{W}_t,~~t\in[0,T]\\
Y_{0}=\xi\in \cB_\alpha,
\end{cases}
\end{equation}
where $Y:[0,T]\ra \cB_\alpha$ is the unknown, the linear part defined via the operator $A$ with domain $\cB_1:=D(A)$ generates an analytic $C_{0}$-semigroup $(S_t)_{t\geq 0}$ on a separable Banach space $\cB$. 
 The scale of Banach spaces $(\cB_\alpha)_{\alpha\in\R}$, the nonlinear drift and diffusion coefficients $F$ and $G$ will be discussed further below, and the noise $\textbf{W}$ is a $\gamma$-H\"older $d$-dimensional rough path for $\gamma\in(\frac{1}{3},\frac{1}{2})$ and some fixed $d\geq 1$. This case includes the Brownian motion and the fractional Brownian motion for $H\in(\frac{1}{3},\frac{1}{2}]$.\medskip

\textbf{Notation:} As commonly met in the rough path theory, we use the notation $Y_t$ instead of $Y(t)$ and $Y_{s,t}:=Y_{t}-Y_{s}$ stands for an increment. \\
Keeping this in mind we specify that the $d$-dimensional noisy input $W=(W^{1}, \ldots W^{d})$ is assumed to be a $\gamma$-H\"older rough path $\textbf{W}:=(W,\mathbb{W})$, for $\gamma\in(1/3,1/2]$. More precisely,
\begin{align*}
W\in C^{\gamma}([0,T];\mathbb{R}^{d}) ~~\mbox{ and } ~~ \mathbb{W}\in C^{2\gamma}([0,T]^2;\mathbb{R}^{d}\otimes\mathbb{R}^{d})
\end{align*}
and the connection between $W$ and $\mathbb{W}$ is given by Chen's relation
\begin{align}\label{chen}
\mathbb{W}_{s,t}- \mathbb{W}_{s,u}-\mathbb{W}_{u,t}=W_{s,u}\otimes W_{u,t},
\end{align}
where we used the increment notation $W_{s,t}=W_{t}-W_{s}$ introduced above. The term $\mathbb{W}$ is referred to as second-order process or L\`evy-area. The process can be interpreted as the iterated integral~\cite[Chapter 10]{FritzHairer}
\begin{align*}
\mathbb{W}_{s,t}=\int\limits_{s}^{t} (W_{r}- W_s) \otimes \txtd W_{r} .
\end{align*}
Throughout this manuscript we assume without loss of generality $d=1$ since the generalization to higher dimensions can be done component-wise and does not require any additional arguments. We further introduce an appropriate distance between two $\gamma$-H\"older rough paths.
\begin{definition}
	Let $J\subset\mathbb{R}$ be a compact interval, $\Delta_J:=\{ (s,t)\in J \times J : s\leq t \}$ and let $\mathbf{W}=(W,\WW)$ and $\mathbf{\tilde{W}}=(\tilde{W},\tilde{\WW})$ be two $\gamma$-H\"older rough paths. We introduce the $\gamma$-H\"older rough path (inhomogeneous) metric
	\begin{align}\label{rp:metric}
	d_{\gamma,J}(\mathbf{W},\mathbf{\tilde{W}} )
	:= \sup\limits_{(s,t)\in \Delta_J} \frac{|W_{s,t}-\tilde{W}_{s,t}|}{|t-s|^{\gamma}}
	+ \sup\limits_{(s,t) \in \Delta_{J}}
	\frac{|\mathbb{W}_{s,t}-\tilde{\mathbb{W}}_{s,t}|} {|t-s|^{2\gamma}}.
	\end{align}
We set $\rho_\gamma(\mathbf{W}):=d_{\gamma,[0,T]}(\mathbf{W},0)$ and denote the space of $\gamma$-H\"older rough paths by $\cC^{\gamma}([0,T];\mathbb{R})$.
\end{definition}
For more details on this topic consult~\cite[Chapter 2]{FritzHairer}. We stress that in our situation we always have that $W_0=0$ and therefore~\eqref{rp:metric} is a metric. \\

Since we consider parabolic rough PDEs, we work with the following function spaces similar to~\cite{GHairer,GHN}. These reflect a suitable interplay between space and time regularity available in the parabolic setting.
\begin{definition}\label{raum}
	A family of separable Banach spaces $(\cB_\alpha,|\cdot|_\alpha)_{\alpha\in\mathbb{R}}$ is called a monotone family of interpolation spaces if for $\alpha_1\leq \alpha_2$, the space $\cB_{\alpha_2}\subset \cB_{\alpha_1}$ with dense and continuous embedding and the following interpolation inequality holds for $\alpha_1\leq \alpha_2\leq \alpha_3$ and $x\in  \cB_{\alpha_3}$:
	\begin{align}\label{interpolation:ineq}
	|x|^{\alpha_3-\alpha_1}_{\alpha_2} \lesssim |x|^{\alpha_3-\alpha_2}_{\alpha_1} |x|^{\alpha_2-\alpha_1}_{\alpha_3}.
	\end{align}
\end{definition}
The main advantage of this approach is that we can view the semigroup $(S(t))_{t\geq 0}$ as a linear mapping between these interpolation spaces and obtain the following standard bounds for the corresponding operator norms. If $S:[0,T]\to \cL(\cB_{\alpha},\cB_{ \alpha+1})$ is such that for every $x\in \cB_{\alpha+1}$ and $t\in(0,T]$ we have that $|(S_t-\Id)x|_{\alpha} \lesssim t|x|_{\alpha+1}$ and $|S(t)x|_{\alpha+1}\lesssim t^{-1} |x|_{\alpha}$, then for every $\sigma\in[0,1]$ we have that $S(t)\in\cL(\cB_{\alpha+\sigma})$ and 
\begin{align}
|(S_t-\Id) x|_{\alpha}&\lesssim t^\sigma |x|_{\alpha+\sigma}\label{hg:1}\\
|S(t)x|_{\alpha+\sigma}&\lesssim t^{-\sigma}|x|_\alpha\label{hg:2}.
\end{align}
An example of such spaces is constituted by $\cB_\alpha=[\cB,\cB_1]_\alpha$, where $[\cdot,\cdot]_\alpha$ denotes the complex interpolation.
For further details regarding these interpolation spaces, see~\cite{Lunardi,GHN}. \\

 
Next, we have to specify a solution concept for~\eqref{rpde}. We rely on the mild formulation of~\eqref{rpde}, namely
\begin{equation}
\label{mild}
Y_{t} = S_{t}\xi + \int\limits_{0}^{t}S_{t-s}F(Y_{s})~\txtd s + \int\limits_{0}^{t}S_{t-s}G(Y_{s})~\txtd \textbf{W}_s.
\end{equation}
In order to give a meaning to~\eqref{mild} we introduce in the following sequel some concepts and notations from rough paths theory~\cite{FritzHairer}.\\ 
{\bf Notations.} We always let $|\cdot|_{\gamma}$ denote the H\"older-norm of $W$, $|\cdot|_{2\gamma}$ the $2\gamma$-H\"older norm of $\mathbb{W}$, and $\|\cdot\|_{\gamma,\alpha}$ stands for the $\gamma$-H\"older norm in $\cB_{\alpha}$. As a convention, the first index in $\|\cdot\|_{\gamma,\alpha}$ describes the time-regularity and the second one refers to the space regularity. Furthermore, the symbol $|\cdot|_{\gamma}$ always indicates the regularity of the random input, i.e.~we use the notation $|\cdot|$
even if we refer to the $\gamma$-H\"older norm of $W$ or to the $2\gamma$-H\"older norm of the second order process $\mathbb{W}$. The symbol $\|\cdot\|_{\gamma,\alpha}$ will be exclusively used to indicate time and space regularity. The notation $\|\cdot\|_{\infty,\alpha}$ stands for the supremum-norm in $\cB_{\alpha}$. Furthermore, $C$ stands for a universal constant which varies from line to line. We write $a \lesssim b$ if there exists a constant $C>0$ such that $a\leq C b$. The constant $C$ is allowed to depend on $F$, $G$ and their derivatives and on the parameters $\gamma$, $\alpha$ and $\rho_\gamma(\W)$,
but can be chosen uniformly on compact intervals. For our purposes we will state most of the estimates on the time interval $[0,1]$.\\ 

We now describe the space of the paths that can be integrated with respect to $\textbf{W}$ and observe that the setting here is different from the rough ODE case, where we showed that for a pair of controlled rough paths $(U,U')$ the convolution with the semigroup $(S_{t-\cdot}U, S_{t-\cdot}U')$ remains again a controlled rough path, see \cite[Lemma~2.6.1]{KN}. In the finite dimensional case, this is possible due to the Lipschitz continuity in time of the semigroup. However, this property does not hold true in infinite dimensions because the semigroup is not H\"older continuous in zero. More precisely, due to~\eqref{hg:2} we have for $0<s\leq t \leq T$ that
\begin{align}
\|S_t\xi - S_{s} \xi\|_{\cB} =\|(S_{t-s}-\mbox{Id})S_s\xi\|_{\cB} &\leq C (t-s)^{2\gamma}\|\xi\|_{\cB_{2\gamma}}\label{s:hoelder}\\
& \leq  C(t-s)^{2\gamma} s^{-2\gamma}\|\xi\|_{\cB},\label{m:hoelder}
\end{align}
which indicates a singularity in zero for initial data $\xi\in\cB$. Due to this fact, it is a challenge task to find the right function space of controlled rough paths in which to set up a fixed-point argument to solve rough PDEs. 
Several approaches have been considered in the literature to overcome this obstacle. For instance:
\begin{itemize}
	\item [(P1)] take more regular initial data, i.e.~$\xi\in \cB_{2\gamma}$;
	\item [(P2)] solve~\eqref{rpde} as in~\cite{HesseNeamtu1} in a modified H\"older space which compensates the {\em time}-singularity occurring in~\eqref{m:hoelder}, e.g. $$C^{2\gamma}_{2\gamma}([0,T];\cB):=\left\{\sup\limits_{0<s<t\leq T}s^{2\gamma}\frac{\|U_{s,t}\|_\cB}{(t-s)^{2\gamma}}<\infty\right\};$$
	\item [(P3)] require higher {\em space} regularity and solve~\eqref{rpde} in a larger space containing $\cB$ and use regularizing properties of analytic semigroups to show that the solution actually belongs to $\cB$ as in~\cite{GHairer,GHN}.
\end{itemize}

	
Regarding this we introduce the following definition of a controlled rough path tailored to the parabolic structure of the  PDE we consider. For other approaches see~\cite{GHairer} and~\cite[Section 12.2.2]{FritzHairer}.
\begin{definition} {\em (Controlled rough path according to a monotone family $(\cB_\alpha)_{\alpha\in \R}$).}\label{def:crp}
	We call a pair $(U,U')$ a controlled rough path if
	\begin{itemize}
		\item $(U,U')\in C([0,T];\cB_\alpha) \times ((C[0,T];\cB_{\alpha-\gamma} ) \cap C^{\gamma}([0,T];\cB_{\alpha-2\gamma} ))$. The component $U'$ is referred to as the Gubinelli derivative\footnote{For smooth paths $U$ and $W$, the choice of $U'$ is not unique. However, one can show that for rough inputs $W$, $U'$ is uniquely determined by $U$, see~\cite[Remark~4.7 and Section~6.2]{FritzHairer}.} of $U$.
		\item the remainder  \begin{align}\label{remainder}
		R^U_{s,t}= U_{s,t} -U'_s W_{s,t}    
		\end{align}
		belongs to $ C^{\gamma}([0,T];\cB_{\alpha-\gamma})\cap C^{2\gamma}([0,T];\cB_{\alpha-2\gamma})$.
	\end{itemize}
\end{definition}

The space of controlled rough paths is denoted by $D^{2\gamma}_{W,\alpha}$ and endowed with the norm $\|\cdot\|_{W,2\gamma,\alpha}$ given by
\begin{align}\label{g:norm}
\gubnorm{U}{U'}= \left\|U \right\|_{\infty,\cB_\alpha} 
+ \|U' \|_{\infty,\cB_{\alpha-\gamma}}
+ \left\|U'\right\|_{\gamma,\cB_{\alpha-2\gamma}}
+ \left\|R^U \right\|_{2\gamma,\cB_{\alpha-2\gamma}}.
\end{align}
 In order to emphasize the time horizon we write $D^{2\gamma}_{W,\alpha}([0,T])$ instead of $D^{2\gamma}_{W,\alpha}$.
\begin{remark}
	\begin{itemize}
		\item [1).] Note that we do not make the H\"older continuity of $U$ part of the definition of a controlled rough path, since
		using~\eqref{remainder} one immediately obtains for $\theta \in \left\{\gamma,2\gamma \right\}$ that
		\begin{align}\label{est:hoelder:y}
		\left\|U \right\|_{\gamma,\cB_{\alpha-\theta}}
		\leq \left\|U' \right\|_{\infty,\cB_{\alpha-\theta}} \left\|W\right\|_{\gamma} + \left\|R^y \right\|_{\gamma,\cB_{\alpha-\theta}}.
		\end{align}
	\item[2).] One can show that on a monotone scale of interpolation spaces the norm in~\eqref{g:norm} is equivalent to the apparently stronger one introduced in~\cite{GHN} which additionally includes $\|R^U\|_{\gamma,\alpha-\gamma}$. A proof of this statement can be found in~\cite{HocquetN} and relies on the interpolation inequality~\eqref{interpolation:ineq}.
			\item [3).] Definition~\ref{def:crp} states that $(U,U')\in D^{2\alpha}_{W,\gamma}$ is controlled by $W$ according to the monotone family of interpolation spaces $(\cB_\alpha)_{\alpha\in\mathbb{R}}$ as in~\cite{GHN}. One can make the semigroup $(S_t)_{t\geq 0}$ part of the definition of the controlled rough path as in~\cite{GHairer}. We work with Definition~\ref{def:crp}, since it reflects the appropriate space-time regularity of the solution and stays closer to the finite-dimensional setting~\cite{FritzHairer, Gubinelli}. Moreover, for the existence of center manifolds we will apply a cut-off technique to~\eqref{rpde}. For this argument it is also convenient not to incorporate the semigroup in the definition of the controlled rough path.
	\end{itemize}
\end{remark}

Throughout this section $(U,U')$ is going to denote an arbitrary controlled rough path and $(Y,Y')$ is used to refer to the solution of~\eqref{rpde}. Given a controlled rough path, one can introduce the rough integral as follows.

\begin{theorem}
\label{integral} Let $(U,U')\in D^{2\gamma}_{W,\alpha}$. Then 
\begin{align}\label{Gintegral}
\int\limits_{s}^{t} S_{t-r}U_{r}~\txtd \textbf{W}_{r} :=\lim\limits_{|\mathcal{P}|\to 0} \sum\limits_{[u,v]\in\mathcal{P}} S_{t-u}U_{u}W_{u,v} + S_{t-u}U'_{u}\mathbb{W}_{u,v},
\end{align}
where $\mathcal{P}$ denotes a partition of $[s,t]$. For $0\leq \beta< 3\gamma$ the following estimate
\begin{align}
\label{estimate:integral}
\Bigg\| \int\limits_{s}^{t} S_{t-r} U_{r}~\txtd\textbf{W}_{r} - S_{t-s}U_{s}W_{s,t} -S_{t-s}U'_{s}\mathbb{W}_{s,t} \Bigg\|_{\cB_{\alpha-2\gamma+\beta}} \lesssim \rho_\gamma(\W) \gubnorm{U}{U'} (t-s)^{3\gamma-\beta}
\end{align}
holds true. 
\end{theorem}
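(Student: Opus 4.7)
The plan is to realize the rough convolution via a parabolic version of Gubinelli's sewing lemma applied to the germ
\begin{align*}
\Xi_{u,v}:= S_{t-u} U_{u} W_{u,v} + S_{t-u} U'_{u}\mathbb{W}_{u,v},\qquad s\leq u\leq v\leq t,
\end{align*}
which encodes the formal second-order Taylor expansion of the integral on $[u,v]$. The first step is to compute the sewing defect $\delta\Xi_{u,m,v}:=\Xi_{u,v}-\Xi_{u,m}-\Xi_{m,v}$ for $u\leq m\leq v$. Using $W_{u,v}=W_{u,m}+W_{m,v}$ together with Chen's relation~\eqref{chen}, an algebraic rearrangement yields the four-term identity
\begin{align*}
\delta\Xi_{u,m,v} = (S_{t-u}-S_{t-m}) U_{m} W_{m,v} - S_{t-u} R^{U}_{u,m} W_{m,v} + (S_{t-u}-S_{t-m}) U'_{m} \mathbb{W}_{m,v} - S_{t-u} (U'_{m}-U'_{u}) \mathbb{W}_{m,v},
\end{align*}
in which the controlled-rough-path data $R^{U}$, the increments of $U'$, and the semigroup differences appear in a form accessible to the regularity estimates~\eqref{hg:1}--\eqref{hg:2}.

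Next I would estimate each of the four summands in $\cB_{\alpha-2\gamma+\beta}$. For the semigroup-difference terms I would use the factorization $S_{t-u}-S_{t-m}=(S_{m-u}-\Id)S_{t-m}$ and apply~\eqref{hg:1} with a suitable interpolation index $\sigma\in[0,1]$ to extract a factor of $(m-u)^{2\gamma-\beta}$, while~\eqref{hg:2} guarantees that $S_{t-m}$ maps $U_{m}$ and $U'_{m}$ boundedly into the required space. The two remaining terms are handled directly from the bounds $\|R^{U}\|_{2\gamma,\alpha-2\gamma}$ and $\|U'\|_{\gamma,\alpha-2\gamma}$ encoded in $\gubnorm{U}{U'}$. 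Combining these with $|W|_{\gamma},|\mathbb{W}|_{2\gamma}\leq \rho_\gamma(\W)$ produces
\begin{align*}
\|\delta\Xi_{u,m,v}\|_{\alpha-2\gamma+\beta}\lesssim \rho_\gamma(\W)\,\gubnorm{U}{U'}\,(v-u)^{3\gamma-\beta}.
\end{align*}
Since $\gamma>1/3$, the exponent $3\gamma-\beta$ exceeds $1$ for $\beta$ close to zero, so the classical Gubinelli sewing lemma produces the Riemann-sum limit in~\eqref{Gintegral} and at the same time delivers the estimate~\eqref{estimate:integral}.

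The principal technical obstacle is the regime where $\beta$ is close to $3\gamma$, for then the natural choice $\sigma=2\gamma-\beta$ in~\eqref{hg:1} becomes negative and the exponent $3\gamma-\beta$ may drop below $1$. To handle this I would split the semigroup difference by using both~\eqref{hg:1} and~\eqref{hg:2} simultaneously, trading the missing time regularity for space regularity via the smoothing action of $S_{t-m}$ on the monotone scale $(\cB_\alpha)_{\alpha\in\R}$, so that no singularity in $t-m$ survives in the final bound. This is precisely the parabolic analogue of the sewing argument in the spirit of~\cite{GHairer,GHN}, and it is the point at which the interpolation inequality~\eqref{interpolation:ineq} of Definition~\ref{raum} becomes essential to absorb the negative powers of time into controlled shifts along the scale.
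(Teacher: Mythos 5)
A point of orientation first: the paper does not prove Theorem~\ref{integral} at all; it is recalled from the works cited around it (\cite{GHN,HN21}, see also \cite{GHairer}), and those proofs follow exactly the route you outline, namely a sewing argument for the germ $\Xi_{u,v}=S_{t-u}U_uW_{u,v}+S_{t-u}U'_u\WW_{u,v}$. Your algebra is correct: Chen's relation \eqref{chen} together with $U_{u,m}=U'_uW_{u,m}+R^U_{u,m}$ gives precisely your four-term identity for $\delta\Xi_{u,m,v}$, and estimating each term in $\cB_{\alpha-2\gamma}$ yields $\|\delta\Xi_{u,m,v}\|_{\alpha-2\gamma}\lesssim\rho_\gamma(\W)\gubnorm{U}{U'}(v-u)^{3\gamma}$ with $3\gamma>1$, so the classical sewing lemma does produce the limit \eqref{Gintegral} and the case $\beta=0$ of \eqref{estimate:integral}.

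The genuine gap is the treatment of $\beta>0$, which is the actual content of the theorem. Two things go wrong with your plan. First, even for moderate $\beta$ your term-by-term estimates are not available as stated: $R^U_{u,m}$ and $U'_{u,m}$ are controlled by \eqref{g:norm} only in $\cB_{\alpha-2\gamma}$, so placing them in $\cB_{\alpha-2\gamma+\beta}$ requires either the interpolation inequality \eqref{interpolation:ineq} (combined with the equivalent stronger norm mentioned in the remark following Definition~\ref{def:crp}) or the smoothing bound \eqref{hg:2}; the interpolation route gives a clean bound $(v-u)^{3\gamma-\beta}$ only for $\beta\leq\gamma$, and the classical sewing lemma then applies only while $3\gamma-\beta>1$, i.e.\ $\beta<3\gamma-1$, which is a small part of $[0,3\gamma)$ since $\gamma<1/2$. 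Second, and more fundamentally, for the remaining range the singularity you hope to avoid is unavoidable: the controlled-path data have a hard spatial ceiling ($U_m\in\cB_\alpha$, $U'_m\in\cB_{\alpha-\gamma}$, increments and remainder only in $\cB_{\alpha-2\gamma}$), so landing $\delta\Xi_{u,m,v}$ in $\cB_{\alpha-2\gamma+\beta}$ forces the use of \eqref{hg:2} for $S_{t-m}$ or $S_{t-u}$ and produces factors such as $(t-m)^{-(\beta-2\gamma)}$ or $(t-u)^{-\beta}$ in the defect bound; moreover the classical Gubinelli sewing lemma cannot output the exponent $3\gamma-\beta<1$ in \eqref{estimate:integral} in any case, since it needs a defect exponent exceeding $1$ and returns a remainder of that same order. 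What the cited proofs actually do is establish a weighted defect bound of the type $\|\delta\Xi_{u,m,v}\|_{\alpha-2\gamma+\beta}\lesssim (t-v)^{-\beta}(v-u)^{3\gamma}$ alongside the unweighted bound in the lower space, and then rerun the sewing/dyadic-partition summation so that the singular weight, paired with the gain $(v-u)^{3\gamma}$ and with the subintervals adjacent to $t$ handled separately, integrates out to exactly $(t-s)^{3\gamma-\beta}$. This singular (weighted) sewing step is the crux of Theorem~\ref{integral}; your final paragraph asserts its conclusion (``no singularity in $t-m$ survives'') rather than proving it, and as literally stated that claim is false.
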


We emphasize that the stochastic convolution increases the spatial regularity of the controlled rough path, see~\cite[Corollary 4.6]{GHN} and Lemma 3.5 in~\cite{HN21}. We recall this result, which will be used later on.
\begin{corollary}\label{cor:higherreg}
	Let $(U,U')\in\cD^{2\gamma}_{W,\alpha}$, $T\in[0,1]$ and $0\leq \sigma<\gamma$. Then the integral map
	\begin{align*}
	(U,U')\mapsto (Z,Z') := \Big( \int_0^\cdot S_{\cdot-r}U_r~\txtd \W_r, U_\cdot \Big)
	\end{align*}
	maps $\cD^{2\gamma}_{W,\alpha}$ into itself. Moreover
	\begin{align*}
 \gubnormpar{Z}{Z'}{\alpha+\sigma} \leq |U_0|_{\alpha} + |U'_0|_{\alpha-\gamma} + C T^{\gamma-\sigma} (1+\rho_\gamma(\W)) \gubnorm{U}{U'}.
	\end{align*}

\end{corollary}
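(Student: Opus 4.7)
The plan is to bound each of the four summands in $\gubnormpar{Z}{Z'}{\alpha+\sigma}$ separately, with the choice $Z'_t := U_t$ for the Gubinelli derivative. The two $Z'$-summands, $\|Z'\|_{\infty,\cB_{\alpha+\sigma-\gamma}}$ and $\|Z'\|_{\gamma,\cB_{\alpha+\sigma-2\gamma}}$, are regularity claims about $U$ in spaces whose indices lie strictly between the ones already controlled by $\gubnorm{U}{U'}$, since $\sigma<\gamma$. Writing $U_t = U_0 + U_{0,t}$ and applying the interpolation inequality~\eqref{interpolation:ineq} to $U_{s,t}$ between $\cB_\alpha$ (via $\|U\|_{\infty,\cB_\alpha}$) and $\cB_{\alpha-\gamma}$ (via $\|U\|_{\gamma,\cB_{\alpha-\gamma}}$, itself controlled by $\|U'\|_\infty|W|_\gamma+\|R^U\|_{\gamma,\cB_{\alpha-\gamma}}$ through~\eqref{est:hoelder:y} and the norm equivalence of Remark~2) will deliver these bounds, with the $T^{\gamma-\sigma}$ factor emerging from the gap in H\"older exponents and the $|U_0|_\alpha$ contribution arising as the $t=0$ constant.

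To control $\|Z\|_{\infty,\cB_{\alpha+\sigma}}$, I apply Theorem~\ref{integral} at $s=0$ with $\beta:=2\gamma+\sigma$ (admissible as $\sigma<\gamma$):
\benn
Z_t = S_t U_0\, W_{0,t} + S_t U'_0\, \WW_{0,t} + r_{0,t},\qquad |r_{0,t}|_{\alpha+\sigma}\lesssim \rho_\gamma(\W)\,\gubnorm{U}{U'}\,t^{\gamma-\sigma}.
\eenn
The leading two terms are estimated via the smoothing bound~\eqref{hg:2}: $|S_t U_0|_{\alpha+\sigma}\lesssim t^{-\sigma}|U_0|_\alpha$ paired with $|W_{0,t}|\leq \rho_\gamma(\W)\,t^\gamma$, and $|S_t U'_0|_{\alpha+\sigma}\lesssim t^{-\gamma-\sigma}|U'_0|_{\alpha-\gamma}$ paired with $|\WW_{0,t}|\leq \rho_\gamma(\W)\,t^{2\gamma}$. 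Each of the three contributions is then of order $t^{\gamma-\sigma}$, proving $\|Z\|_{\infty,\cB_{\alpha+\sigma}}\lesssim T^{\gamma-\sigma}(1+\rho_\gamma(\W))\gubnorm{U}{U'}$.

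For the remainder $R^Z_{s,t}=Z_{s,t}-U_s W_{s,t}$, I use the semigroup translation identity
\benn
Z_{s,t}=(S_{t-s}-\Id)Z_s+\int_s^t S_{t-r}U_r\,\txtd\W_r,
\eenn
combined with Theorem~\ref{integral} applied to the integral with $\beta:=\gamma$, so that the cubic-order remainder sits in $\cB_{\alpha-\gamma}\hookrightarrow \cB_{\alpha+\sigma-2\gamma}$ with $(t-s)^{2\gamma}$-regularity. The resulting four pieces $(S_{t-s}-\Id)Z_s$, $(S_{t-s}-\Id)U_s W_{s,t}$, $S_{t-s}U'_s\WW_{s,t}$, and the Taylor remainder are treated respectively by~\eqref{hg:1} with exponent $2\gamma$ together with the previously established $\|Z\|_\infty$-bound, by~\eqref{hg:1} with exponent $2\gamma-\sigma$, by the embedding $\cB_{\alpha-\gamma}\hookrightarrow\cB_{\alpha+\sigma-2\gamma}$, and by the Taylor estimate itself. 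Each produces a factor $(t-s)^{2\gamma}$ times a coefficient of the desired order $T^{\gamma-\sigma}(1+\rho_\gamma(\W))\gubnorm{U}{U'}$.

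The main difficulty is the bookkeeping of spatial-regularity gains against time-regularity losses: one must verify that the chosen $\beta$-values in Theorem~\ref{integral} respect $\beta<3\gamma$ and that the semigroup smoothing exponents stay in $[0,1]$, all of which reduce to the hypothesis $\sigma<\gamma$ (together with $\gamma\leq 1/2$). For terms in which the semigroup provides no spatial gain (most notably $S_{t-s}U'_s\WW_{s,t}$), one splits $|U'_s|_{\alpha-\gamma}\leq |U'_0|_{\alpha-\gamma}+T^\gamma\|U'\|_{\gamma,\cB_{\alpha-2\gamma}}$ and exploits $T\leq 1$ to bound $T^\gamma\leq T^{\gamma-\sigma}$; this is precisely the mechanism producing the $|U_0|_\alpha+|U'_0|_{\alpha-\gamma}$ summand outside the $T^{\gamma-\sigma}$ factor in the final estimate, with everything else absorbed into $CT^{\gamma-\sigma}(1+\rho_\gamma(\W))\gubnorm{U}{U'}$.
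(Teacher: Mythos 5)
You should first note that the paper does not prove this corollary at all: it is recalled verbatim from the literature (\cite[Corollary 4.6]{GHN} and \cite[Lemma 3.5]{HN21}), so your argument can only be measured against the standard proof there, which your outline broadly resembles. Several of your steps are correct and well chosen: the bound on $\|Z\|_{\infty,\cB_{\alpha+\sigma}}$ via Theorem~\ref{integral} at $s=0$ with $\beta=2\gamma+\sigma<3\gamma$, combined with \eqref{hg:2} applied to $S_tU_0W_{0,t}$ (exponent $\sigma$) and $S_tU'_0\WW_{0,t}$ (exponent $\gamma+\sigma<1$), is exactly right, as is the four-piece decomposition of $R^Z_{s,t}$ and the treatment of $(S_{t-s}-\Id)Z_s$ and $(S_{t-s}-\Id)U_sW_{s,t}$ via \eqref{hg:1} with exponents $2\gamma$ and $2\gamma-\sigma$.

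However, three steps do not deliver the claimed estimate as written. (a) For the Taylor remainder of the rough convolution you take $\beta=\gamma$ in Theorem~\ref{integral}; this gives a bound $\lesssim \rho_\gamma(\W)\gubnorm{U}{U'}(t-s)^{2\gamma}$ in $\cB_{\alpha-\gamma}\hookrightarrow\cB_{\alpha+\sigma-2\gamma}$ with \emph{no} factor $T^{\gamma-\sigma}$, so this piece alone already violates the asserted form of the estimate; the correct choice is $\beta=\sigma$, which places the remainder directly in $\cB_{\alpha+\sigma-2\gamma}$ with exponent $3\gamma-\sigma=2\gamma+(\gamma-\sigma)$. (b) Your auxiliary inequality $|U'_s|_{\alpha-\gamma}\leq|U'_0|_{\alpha-\gamma}+T^\gamma\|U'\|_{\gamma,\cB_{\alpha-2\gamma}}$ is false: the increment $U'_{0,s}$ is only controlled in the \emph{weaker} space $\cB_{\alpha-2\gamma}$, and a $\cB_{\alpha-\gamma}$-norm cannot be dominated by a $\cB_{\alpha-2\gamma}$-norm. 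What the term $S_{t-s}U'_s\WW_{s,t}$ actually requires is $|U'_s|_{\alpha+\sigma-2\gamma}$, and the missing ingredient is the interpolation inequality \eqref{interpolation:ineq} with endpoints $\alpha-2\gamma$ and $\alpha-\gamma$ applied to $U'_{0,s}$, which yields $|U'_{0,s}|_{\alpha+\sigma-2\gamma}\lesssim s^{\gamma-\sigma}\gubnorm{U}{U'}$ and in particular explains why the gain is $T^{\gamma-\sigma}$ and not your $T^\gamma$. (c) The same defect appears in the component $\|Z'\|_{\gamma,\cB_{\alpha+\sigma-2\gamma}}$: interpolating $U_{s,t}$ between $\cB_\alpha$ and $\cB_{\alpha-\gamma}$, as you propose, can only reach indices in $[\alpha-\gamma,\alpha]$, whereas $\alpha+\sigma-2\gamma<\alpha-\gamma$; one has to split $U_{s,t}=U'_sW_{s,t}+R^U_{s,t}$ and interpolate each piece between $\cB_{\alpha-2\gamma}$ and $\cB_{\alpha-\gamma}$ (for $R^U$ using the equivalent norm of Remark~2 which also controls $\|R^U\|_{\gamma,\cB_{\alpha-\gamma}}$). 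Your mechanism is correct only for $\|Z'\|_{\infty,\cB_{\alpha+\sigma-\gamma}}$. These gaps are repairable with the tools already in the paper, but as it stands the proposal does not establish the $T^{\gamma-\sigma}$-smallness that is the whole point of the corollary; a last, minor remark is that the corrected argument produces the initial-data terms with a factor $(1+\rho_\gamma(\W))$ rather than the displayed constant $1$, which is harmless for the way the corollary is used later.
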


\textbf{Assumptions on drift and diffusion coefficients}:
\begin{itemize}
	\item [(\textbf{G})] 
	Let $\theta\in\{0,\gamma,2\gamma\}$ and $0\leq\sigma<\gamma$. The nonlinear diffusion coefficient $G:\cB_{\alpha-\theta}\to\cL(\R,\cB_{\alpha-\theta-\sigma})$ is Lipschitz and three times Fr\`echet  differentiable with bounded derivatives, i.e. $\|D^k G\|_{\cL(\cB^{\otimes k}_{\alpha-\theta},\cB_{\alpha-\theta-\sigma}) } <\infty$ for $k\in\{1,2,3\}$. Furthermore, we assume that
	\begin{align}
	\label{eq:dyncond1}
	G(0)=\txtD G(0)=\txtD^2 G(0)=0.
	\end{align}
In order to ensure global-in-time existence of solutions of~\eqref{rpde} we additionally assume that the derivative of
	$$\txtD G(\cdot)G(\cdot):\cB_{\alpha-\gamma}\to \cB_{\alpha-2\gamma-\sigma} $$ is bounded. This condition is satisfied in particular if $G$ itself is bounded or linear~\cite{HesseNeamtu2,HN21}.
	\item [(\textbf{F})] The drift term $F:\cB_{\alpha}\to \cB_{\alpha-\delta}$ for $\delta\in[0,1)$ is Lipschitz continuous. Furthermore, we assume that 	
	\be
	\label{eq:dyncond2}
	F(0)=\txtD F(0)=0.
	\ee
\end{itemize}

The conditions \eqref{eq:dyncond1}-\eqref{eq:dyncond2} guarantee that our main rough PDE~\eqref{rpde} has a steady state at $0\in \cB$; of course, up to a translation, we could take this point anywhere in $\cB$ but we fix it at $0$ for convenience. The global Lipschitz assumptions on the drift and diffusion coefficients can be weakened. However, for our setting, the above assumption suffices, since we will truncate the nonlinear terms in a neighbourhood of the origin as illustrated in Section~\ref{sect:est:sol}. Our main goal is to show that under these assumptions the rough PDE~\eqref{rpde} has a local center manifold in $\cB_\alpha$ for small initial data belonging to $\cB_\alpha$.


\section{The truncated rough PDE}
\label{sect:est:sol}


The next step is to modify $F$ and $G$ such that their Lipschitz constants become small. More precisely, for a fixed $R>0$ we need to compose $F$ and $G$ with a smooth cut-off function $\chi_{R}$. Here $R$ denotes the size of the ball around zero and in our case it will depend on the size of the noise, i.e.~$R=R\Big(|W|_{\gamma,[0,1]},|\mathbb{W}|_{2\gamma,[0,1]^2}\Big)$. Since we develop only a local theory, the exact size of this random ball is not crucial, since this can be chosen small enough as required in the fixed-point argument. In this setting, we recall that a composition of a controlled rough path with a smooth function is a well-defined operation~\cite[Lemma~7.3]{FritzHairer} for rough ODEs and~\cite[Lemma 4.7]{GHN} for rough PDEs. \\
The main novelty here is to define the composition of a controlled rough path $(U,U')\in\cD$ with a smooth cut-off function. Due to this procedure we obtain a rough PDE with path dependent coefficients. Therefore we have to show by means of fixed-point arguments that such an equation is well-posed.\\

For notational simplicity we set $\cD:=D^{2\gamma}_{W,\alpha}$ and define
\begin{align*}
\chi(U):=\begin{cases} U, &\mbox{ if  } \|(U,U')\|_{\mathcal{D}}\leq 1/2,\\
0, &\mbox{ if  } \|(U,U')\|_{\mathcal{D}}\geq 1.
\end{cases}
\end{align*}
For instance $\chi$ can be obtained as
\begin{align*}
\chi(U) = U f(\|U,U'\|_{\mathcal{D}}),
\end{align*}
where $f:\mathbb{R}^{+}\to[0,1]$ is a three-times continuously differentiable cut-off function with bounded derivatives, see~\cite{KN} for particular examples. In this case one has
\begin{align*}
(\chi (U), \chi (U)' ) =\begin{cases}
(U,U'), &\mbox{ if } \|(U,U')\|_{\cD} \leq 1/2\\
0, &\mbox{ if } \|(U,U')\|_{\cD} \geq 1.
\end{cases}
\end{align*}
Furthermore, for $R>0$ we set 
\begin{align*}
\chi_{R}(U) = R~\chi\Big(\frac{1}{R} U\Big)
\end{align*}
such that 
\begin{align*}
(\chi_{R} (U), \chi_{R} (U)' ) =\begin{cases}
 (U,U'), &\mbox{ if } \|(U,U')\|_{\cD} \leq R/2\\
0, &\mbox{ if } \|(U,U')\|_{\cD} \geq R.
\end{cases}
\end{align*}
We denote $F_{R}:=F\circ \chi_{R}$, $G_{R}:=G\circ \chi_{R}$ and have
\begin{align*}
F_{R}(U) = F (U) \mbox{ and } G_{R}(U)=G(U), ~\mbox{ if }~ \|U,U'\|_{\cD} \leq R/2.
\end{align*}
We set for $t\in [0,1]$:
$$\overline{F}(U)(t): = F(U_t) \mbox { and } \overline{G}(U)(t):=G(U_t)$$
and introduce the truncation as
$$ F_R(U):= \overline{F}\circ \chi_{R}(U) \mbox{ respectively } G_R:=\overline{G}\circ \chi_{R}(U).$$
This means that we have $$F_{R}(U)(t) = \overline{F}(\chi_{R}(U))(t)= F(\chi_{R}(U)_t )= F(U_t f (\|U,U'\|/R))$$
and analogously, $$G_{R}(U)(t) = \overline{G}(\chi_{R}(U))(t)= G(\chi_{R}(U)_t  ) = G(U_tf(\|U,U'\|/R) ),$$
where we removed for simplicity the index $\cD$ from $\|\cdot,\cdot\|_{\cD}$. 
The Gubinelli derivative of $G_R$ can be computed according to the chain rule~\cite[Lem.~7.3]{FritzHairer} as
\begin{align*}
(G_{R}(U))' = \txtD G (\chi_{R}(U)) (\chi_{R}(U))' = \txtD G (Uf (\|U,U'\|/R)) U' f(\|U,U'\|/R),
\end{align*}
since $f$ is a constant with respect to time.\\
Evaluating $(G_R(U))'$ at a time $t\in[0,1]$, we obtain
\begin{align*}
(G_{R}(U)(t))' = \txtD G (\chi_{R}(U)_t) (\chi_{R}(U)_t)' = \txtD G (U_tf (\|U,U'\|/R)) U'_t f(\|U,U'\|/R).
\end{align*}
By the definition of $\chi_{R}$ we have that $F_{R}(U)=\overline{F}(U)$ and $G_{R}(U)=\overline{G}(U)$ if $\|(U,U')\|_{\cD}\leq R/2$.
With this notation, the first component of the mild solution of~\eqref{rpde} equivalently rewrites as
\begin{align}\label{equiv:rde}
Y_t = S_t\xi + \int\limits_{0}^{t}S_{t-r}\overline{F}(Y)(r)~\txtd r + \int\limits_{0}^{t}S_{t-r}\overline{G}(Y)(r)~\txtd \textbf{W}_r. 
\end{align}
We now argue in several steps that the modified rough PDE~\eqref{rpde} has a unique solution in the space of controlled rough paths. Note that the coefficients $F_R$ and $G_R$ are now path dependent. We show that $F_R$ and $G_R$ are Lipschitz continuous with Lipschitz constants $L_F(R)$ and $L_G(R)$ such that $L_F(R)\to 0$ respectively $L_G(R)\to 0$ as $R\to 0$. Recall that $\cD=D^{2\gamma}_{W,\alpha}$ and that the universal constant $C$ is allowed to depend on $F$, $G$ and their derivatives. 
\begin{lemma}\label{drift:cutoff}
	Let $(U,U'), (\tilde{U},\tilde{U}')\in\cD$. Then there exists a constant $L_F(R)=L_F[R,F,\chi]$ such that $L_F(R)\to 0$ as $R\to 0 $ and that
	\begin{align}\label{fr}
	\Big\|\Big(\int_0^\cdot S_{\cdot-r}(F_{R}(U)(r) - F_{R}(\widetilde{U})(r)))~\txtd r,0\Big)\Big\|_{\cD}  &\leq L_F(R) \|U-\widetilde{U}, U'-\widetilde{U}'\|_{\cD}.
	\end{align}
\end{lemma}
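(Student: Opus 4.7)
The plan is to reduce the $\cD$-norm estimate to a pointwise Lipschitz bound on $F_R$ and then to run the standard analytic-semigroup estimates for the drift convolution. Setting
$$Z_t := \int_0^t S_{t-r}\bigl(F_R(U)(r) - F_R(\widetilde U)(r)\bigr)\,\txtd r,$$
and recalling that the Gubinelli derivative is taken to be zero, the norm $\|(Z,0)\|_\cD$ collapses to $\|Z\|_{\infty,\cB_\alpha} + \|Z\|_{2\gamma,\cB_{\alpha-2\gamma}}$, since the remainder $R^Z_{s,t}$ coincides with the increment $Z_{s,t}$ itself. Hence only these two seminorms need to be controlled.

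The core step is a pointwise Lipschitz estimate for $F_R$. Writing $\chi_R(U)_r = f_U\,U_r$ and $\chi_R(\widetilde U)_r = f_{\widetilde U}\,\widetilde U_r$ with scalars $f_U := f(\|(U,U')\|_\cD/R) \in [0,1]$ and $f_{\widetilde U}$ analogously, I would decompose
$$\chi_R(U)_r - \chi_R(\widetilde U)_r = f_U\,(U_r - \widetilde U_r) + (f_U - f_{\widetilde U})\,\widetilde U_r.$$
Since $f$ is Lipschitz with some constant $L_f$, one has $|f_U - f_{\widetilde U}| \leq (L_f/R)\|(U-\widetilde U, U'-\widetilde U')\|_\cD$, and on the support of $f_{\widetilde U}$ the a priori bound $\|\widetilde U_r\|_\alpha \leq R$ absorbs the $R^{-1}$, giving
$$\|\chi_R(U)_r - \chi_R(\widetilde U)_r\|_\alpha \lesssim \|(U - \widetilde U, U' - \widetilde U')\|_\cD.$$
Composing with $F$ and exploiting that $F(0) = \txtD F(0) = 0$ forces the Lipschitz constant of $F$ restricted to $\{\|x\|_\alpha \leq R\}$ to tend to zero as $R\to 0$, yields a pointwise bound
$$\|F_R(U)(r) - F_R(\widetilde U)(r)\|_{\alpha-\delta} \leq L_F(R)\,\|(U - \widetilde U, U' - \widetilde U')\|_\cD$$
with $L_F(R) \to 0$ as $R \to 0$.

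The final step is to convolve against the semigroup. The sup-norm in $\cB_\alpha$ follows from $\|S_{t-r}\|_{\cL(\cB_{\alpha-\delta},\cB_\alpha)} \lesssim (t-r)^{-\delta}$ together with integrability on $[0,1]$ (since $\delta < 1$). For the $2\gamma$-Hölder seminorm of $Z$ in $\cB_{\alpha-2\gamma}$ I would decompose
$$Z_t - Z_s = \int_s^t S_{t-r}\bigl(F_R(U)(r) - F_R(\widetilde U)(r)\bigr)\,\txtd r + \int_0^s (S_{t-s} - \Id)\,S_{s-r}\bigl(F_R(U)(r) - F_R(\widetilde U)(r)\bigr)\,\txtd r,$$
bounding the first integral using the embedding $\cB_{\alpha-\delta} \hookrightarrow \cB_{\alpha-2\gamma}$ (or $\|S_{t-r}\|_{\cL(\cB_{\alpha-\delta},\cB_{\alpha-2\gamma})} \lesssim (t-r)^{-(\delta-2\gamma)}$ when $\delta > 2\gamma$), and the second via~\eqref{hg:1} with $\sigma = 2\gamma$ combined with~\eqref{hg:2}. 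On $[0,1]$ both pieces scale as $(t-s)^{2\gamma}$ times $L_F(R)\,\|(U - \widetilde U, U' - \widetilde U')\|_\cD$.

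The main obstacle is the $R$-independence of the Lipschitz constant of the cutoff: the derivative of $f$ produces a factor $R^{-1}$ that must be cleanly cancelled by the a priori bound $\|\widetilde U_r\|_\alpha \leq R$ valid on the support of the cutoff. Without this cancellation the vanishing Lipschitz constant of $F$ near zero would not transfer through the composition, and one could not achieve $L_F(R) \to 0$.
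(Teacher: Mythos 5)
Your proposal is correct and follows essentially the same route as the paper's proof: the reduction of the $\cD$-norm to the sup-norm plus the $2\gamma$-remainder norm (the Gubinelli derivative being zero), the splitting of the increment into $(S_{t-s}-\Id)\int_0^s S_{s-r}(\cdots)\,\txtd r+\int_s^t S_{t-r}(\cdots)\,\txtd r$ estimated via \eqref{hg:1}--\eqref{hg:2}, and the pointwise Lipschitz bound for $F_R$ in which the $R^{-1}$ coming from $\txtD f$ is cancelled by $\|\widetilde U_r\|_{\alpha}\lesssim R$ on the support of the cutoff while the condition $F(0)=\txtD F(0)=0$ supplies the vanishing factor as $R\to 0$. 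The only cosmetic difference is that the paper quantifies this smallness through the mean-value bound $\|\txtD F(\tau\chi_R(U)_r+(1-\tau)\chi_R(\widetilde U)_r)\|\lesssim R$ rather than by invoking the local Lipschitz constant of $F$ on the ball of radius $R$.
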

\begin{proof}
	Recalling~\eqref{g:norm} and the fact that the Gubinelli derivative of the deterministic integral is zero we have to estimate $\|\int_0^\cdot S_{\cdot-r}(F_R(U)(r)-F_R(\tilde{U}(r)))~\txtd r\|_{\infty,\alpha}$ and the $2\gamma$-norm of the remainder of this convolution in $\cB_{\alpha-2\gamma}$. For the latter we compute for $0\leq s \leq t\leq 1$
	\begin{align*}
&	\int_0^t S_{t-r}[F_R(U)(r)-F_R(\tilde{U})(r)]~\txtd r - 	\int_0^s S_{s-r}[F_R(U)(r)-F_R(\tilde{U})(r)]~\txtd r \\
&= (S_{t-s}-\text{Id}) \int_0^s S_{s-r}[ F_R(U)(r) - F_R(\tilde{U})(r)]~\txtd r + \int_s^t S_{t-r} [F_R(U)(r) - F_R(\tilde{U})(r)]~\txtd r.
	\end{align*}
Let $i=0,1,2$ and recall that $F:\cB_\alpha\to\cB_{\alpha-\delta}$. The first term entails
\begin{align}
&\Bigg\| (S_{t-s}-\text{Id}) \int_0^s S_{s-r}[ F_R(U)(r) - F_R(\tilde{U})(r)]~\txtd r\Bigg\|_{\cB_{\alpha-i\gamma}} \nonumber\\
&\leq \| S_{t-s} -\text{Id}\|_{\cL(\cB_\alpha,\cB_{\alpha-i\gamma})} \int_0^s \|S_{s-r}\|_{\cL(\cB_{\alpha-\delta},\cB_\alpha)} \| [ F_R(U)(r) - F_R(\tilde{U})(r)] \|_{\cB_{\alpha-\delta}}~\txtd r\nonumber\\
& \leq (t-s)^{i\gamma} \int_0^s (s-r)^{-\delta} \| [ F_R(U)(r) - F_R(\tilde{U})(r)] \|_{\cB_{\alpha-\delta}}~\txtd r\nonumber\\
& \leq (t-s)^{i\gamma} s^{1-\delta} \sup\limits_{r\in[0,1]} \| [ F_R(U)(r) - F_R(\tilde{U})(r)] \|_{\cB_{\alpha-\delta}}\label{f1}.
\end{align} 
Analogously we have for the second term
\begin{align}
&\int_s^t \|S_{t-r} [F_R(U)(r) - F_R(\tilde{U})(r)]\|_{\alpha-i\gamma}~\txtd r \nonumber\\ &\leq \int_s^t \|S_{t-r}\|_{\cL(\cB_{\alpha-\delta},\cB_{\alpha-i\gamma})} \| [ F_R(U)(r) - F_R(\tilde{U})(r)] \|_{\cB_{\alpha-\delta}}~\txtd r\nonumber\\
& \leq \int_s^t (t-r)^{i\gamma-\delta} \| [ F_R(U)(r) - F_R(\tilde{U})(r)] \|_{\cB_{\alpha-\delta}}~\txtd r \nonumber  \\
	& \leq (t-s)^{\min\{ 1, 1+i\gamma-\delta \}} \sup\limits_{r\in[0,1]} \| [ F_R(U)(r) - F_R(\tilde{U})(r)] \|_{\cB_{\alpha-\delta}}\label{f2}.
\end{align}
Therefore, we compute $\sup\limits_{r\in[0,1]} \| [ F_R(U)(r) - F_R(\tilde{U})(r)] \|_{\cB_{\alpha-\delta}}$.  We have

	\begin{align*}
\sup\limits_{r\in[0,1]} \|F_R(U)(r) - F_{R}(\widetilde{U})(r)\|_{\cB_{\alpha-\delta}} &=\sup\limits_{r\in[0,1]} \|F(\chi_{R}(U)_r) - F(\chi_{R}(\widetilde{U})_r) \|_{\cB_{\alpha-\delta}}.
\end{align*}
This further results in 
\begin{align}
&\|F(\chi_{R}(U)_r) - F(\chi_{R}(\widetilde{U})_r) \|_{\cB_{\alpha-\delta}}	\leq \int\limits_{0}^{1} \|\txtD F(\tau\chi_{R}(U)_r + (1-\tau)\chi_R(\widetilde{U})_t)\|_{\cB_{\alpha-\delta}}~\txtd \tau~ \|\chi_{R}(U)_r -\chi_{R}(\widetilde{U})_r\|_{\cB_{\alpha-\delta}}\nonumber\\
&\leq \| \txtD F\|_{\cL(\cB_{\alpha},\cB_{\alpha-\delta})} \max \{\|\chi_{R}(U)_r\|_{\cB_\alpha}, \|\chi_{R}(\widetilde{U})_r\|_{\cB_\alpha}\} \|\chi_R(U)_r - \chi_R(\widetilde{U})_r\|_{\cB_{\alpha-\delta}}\nonumber\\
& \leq C R \|\chi_R(U)_r - \chi_R(\widetilde{U})_r\|_{\cB_\alpha}.\label{here}
\end{align}

Here we use that $ \|\chi_{R}(U)_r\|_{\cB_\alpha} \leq \|\chi_{R}(U)\|_{\infty,\alpha} =\|U f(\|U,U'\|/R)\|_{\infty,\alpha}=\|U\|_{\infty,\alpha} f(\|U,U'\|/R)$ together with the fact that $\cB_{\alpha}\subset \cB_{\alpha-\delta}$ therefore $\|\cdot\|_{\cB_{\alpha-\delta}}\leq C\| \cdot \|_{\cB_\alpha}$. 
 
Furthermore
\begin{align*}
\|\chi_R(U)_r - \chi_R(\tilde{U})_r\|_{\cB_\alpha} & = \|U_r f(\|U,U'\|/R) - \tilde{U}_r f(\|\tilde{U},\tilde{U}'\|/R)\|_{\cB_\alpha}\\
& \leq \|U_r -\tilde{U}_r\|_{\cB_\alpha} f(\|U,U'\|/R) +\|\tilde{U}_r\|_{\cB_\alpha}~ |f(\|U,U'\|/R) - f(\|\tilde{U},\tilde{U}'\|/R) |\\
& \leq \|U-\tilde{U}\|_{\infty,\alpha} + R \|\txtD f\|_{\infty} (\|U,U'\|/R - \|\tilde{U},\tilde{U}'\|/R)\\
& \leq (1+\|\txtD f\|_{\infty})  \|U -\widetilde{U}, U'-\widetilde{U}'\|_{\cD}.
\end{align*}
Consequently, this further yields
\begin{align*}
\sup\limits_{r\in[0,1]} \|F_R(U)(r) - F_{R}(\widetilde{U})(r) \|_{\cB_{\alpha-\delta}}&\leq C R \|\chi_{R}(U) -\chi_{R}(\widetilde{U})\|_{\infty,\alpha} 
\leq C [F,\chi] ~R~ \|U -\widetilde{U}, U'-\widetilde{U}'\|_{\cD}.
\end{align*}
Combining this with~\eqref{f1} and~\eqref{f2} proves the statement. \qed
	\end{proof}

\begin{lemma}\label{diffusion:cutoff}
	Let $(U,U'), (\widetilde{U},\widetilde{U}')\in \cD$. Then there exists a constant $L_G(R)=L_G[R, G, \rho_\gamma(\W),\chi]$ such that $L_G(R)\to 0$ as $R\to 0$ and that
	\begin{align}\label{composition:r}
	\|G_{R}(U) - G_R(\widetilde{U}), (G_{R}(U)- G_R(\widetilde{U}))'\|_{{D^{2\gamma}_{W,\alpha-\sigma}}} \leq L_G (R) \|U-\widetilde{U}, U'-\widetilde{U}'\|_{\cD}.
		\end{align}
\end{lemma}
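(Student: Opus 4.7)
The plan is to combine two ingredients: (i) a uniform-in-$R$ Lipschitz estimate for the cutoff map $\chi_R$ on the controlled-rough-path space $\cD$, and (ii) a composition-with-a-smooth-function lemma for controlled rough paths on a monotone scale of interpolation spaces, applied to $G$ and then refined using the cancellations $G(0)=\txtD G(0)=\txtD^2 G(0)=0$ to extract the required factor of $R$. The first ingredient is obtained exactly as in the proof of Lemma \ref{drift:cutoff}: writing $\chi_R(U)=U\,f(\|U,U'\|_\cD/R)$ and using that $f$ and its derivatives up to second order are bounded, one verifies
\[
\|\chi_R(U),\chi_R(U)'\|_\cD \leq R, \qquad \|\chi_R(U)-\chi_R(\widetilde U),\chi_R(U)'-\chi_R(\widetilde U)'\|_\cD \leq C[\chi]\,\|U-\widetilde U,U'-\widetilde U'\|_\cD,
\]
with $C[\chi]$ independent of $R$. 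This reduces the problem to bounding the map $(V,V')\mapsto (G(V),\txtD G(V)V')$ as a Lipschitz map from a ball of radius $R$ in $\cD$ into $D^{2\gamma}_{W,\alpha-\sigma}$.

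For the composition estimate I would expand each of the four components of the norm $\gubnormpar{\cdot}{\cdot}{\alpha-\sigma}$ by Taylor expanding $G$ at intermediate points and distributing the space-regularity losses via the interpolation inequality \eqref{interpolation:ineq}. The Gubinelli derivative of $G(V)$ is $(G(V))'=\txtD G(V)V'$, and the remainder
\[
R^{G(V)}_{s,t} = \int_0^1\!\txtD G(V_s+\tau V_{s,t})\,\txtd\tau\cdot R^V_{s,t} + \int_0^1\!(1-\tau)\,\txtD^2 G(V_s+\tau V_{s,t})\,\txtd\tau\,[V_{s,t},V_{s,t}]
\]
is controlled in the $2\gamma$-Hölder norm on $\cB_{\alpha-\sigma-2\gamma}$ via the $2\gamma$-regularity of $R^V$ and the $\gamma$-regularity of $V$ (squared). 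The differences $G(V)-G(\widetilde V)$ and $(G(V))'-(G(\widetilde V))'$ are then treated by telescoping and inserting $\txtD^2 G$ at a mean value, yielding
\[
\gubnormpar{G(V)-G(\widetilde V)}{(G(V))'-(G(\widetilde V))'}{\alpha-\sigma} \leq C\,P(R,\rho_\gamma(\W))\,\|V-\widetilde V,V'-\widetilde V'\|_\cD,
\]
where $P$ is a polynomial whose coefficients depend on $\|\txtD^k G\|_\infty$ for $k\leq 3$. The factor of $R$ is then extracted from $G(0)=\txtD G(0)=\txtD^2 G(0)=0$: since $\|V_s\|_{\cB_{\alpha-\theta}}\leq R$ for $\theta\in\{0,\gamma,2\gamma\}$, Taylor's theorem yields $\|\txtD G(V_s)\|\lesssim R$, $\|\txtD^2 G(V_s)\|\lesssim R$ and $\|G(V_s)\|\lesssim R^3$, with identical bounds for $\widetilde V$. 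Substituting these into every building block of the expansion shows that each term carries an extra factor of $R$ beyond the Lipschitz factor $\|V-\widetilde V,V'-\widetilde V'\|_\cD$, so $L_G(R)=C[G,\chi,\rho_\gamma(\W)]\,R$, which tends to zero as $R\to 0$.

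The main technical obstacle is the bookkeeping in the second step, in particular the estimate for $R^{G(V)}-R^{G(\widetilde V)}$ in the $2\gamma$-Hölder norm on $\cB_{\alpha-\sigma-2\gamma}$. One needs a double Taylor expansion of $G$ at intermediate points between $V_s$, $V_s+\tau V_{s,t}$, $\widetilde V_s$ and $\widetilde V_s+\tau \widetilde V_{s,t}$, together with a careful distribution of space-regularity indices via \eqref{interpolation:ineq}, since $(G(V))'$ takes values in $\cB_{\alpha-\sigma-\gamma}$ while $R^V_{s,t}$ is only controlled in $\cB_{\alpha-2\gamma}$. Fortunately the composition part of this calculation is essentially classical and is carried out in the references appearing around Theorem \ref{integral} and Corollary \ref{cor:higherreg}, so the present proof mainly amounts to invoking a standard composition result and then reading off the $R$-dependence from the vanishing of $G$, $\txtD G$ and $\txtD^2 G$ at the origin.
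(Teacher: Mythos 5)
Your proposal is correct and takes essentially the same route as the paper's proof: a Lipschitz bound for the cut-off $\chi_R$ with constant independent of $R$ together with the smallness $\|\chi_R(U),\chi_R(U)'\|_{\cD}\leq R$, followed by composition estimates for $G$ (sup-norm, Gubinelli derivative and remainder components) in which the factor $R$ is read off from $G(0)=\txtD G(0)=\txtD^2 G(0)=0$ and the boundedness of $\txtD^3 G$ --- the paper merely carries out the ``classical'' composition bookkeeping explicitly (the four-term decomposition for the $\gamma$-norm of the derivative difference and the double Taylor expansion \`a la Hu--Nualart for the remainder difference) instead of citing it. One small slip: in your formula for $R^{G(V)}_{s,t}$ the first term should be $\txtD G(V_s)\,R^V_{s,t}$ rather than $\int_0^1 \txtD G(V_s+\tau V_{s,t})\,\txtd\tau\, R^V_{s,t}$, since the Gubinelli derivative of $G(V)$ is $\txtD G(V_s)V'_s$; the discrepancy is of order $|t-s|^{3\gamma}$ and harmless, but the exact identity is the one used in the paper.
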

\begin{proof}
	Regarding our assumption ({\bf G}) we can view the Fr\'echet derivative $\txtD^k G$ as an element of $\cL(\cB^{\otimes k}_{\alpha-i\gamma}, \cB_{\alpha-i\gamma-\sigma})$ for $k=1,2,3$ and $i=0,1,2$.  \\
	Due to~\eqref{g:norm} we have to estimate
	\begin{align*}
	&\|G_R(U)-G_R(\tilde{U})\|_{\infty,\alpha-\sigma} + \| (G_R(U) -G_R(\tilde{U}))'\|_{\infty,\alpha-\gamma-\sigma}\\& +\| (G_R(U) -G_R(\tilde{U}))'\|_{\gamma,\alpha-2\gamma-\sigma} 
	+ \|R^{G_R(U)-G_R(\tilde{U})} \|_{2\gamma,\alpha-2\gamma-\sigma}.
	\end{align*}
	We begin with the first term and write
	\begin{align*}
	\| G_R(U)-G_R(\tilde{U})\|_{\infty,\alpha-\sigma} =\sup\limits_{t\in[0,1]} \|G_R(U)(t) -G_R(\tilde{U})(t)\|_{\cB_\alpha-\sigma} =\sup\limits_{t\in[0,1]} \|G(\chi_{R}(U)_t) - G(\chi_{R}(\widetilde{U})_t) \|_{\cB_{\alpha-\sigma}}.
	\end{align*}
	Analogously to Lemma~\ref{drift:cutoff} and regarding that $\txtD G(0)=0$ we have
	\begin{align}\label{g1}
	&\|G(\chi_{R}(U)_r) - G(\chi_{R}(\widetilde{U})_r) \|_{\cB_{\alpha-\sigma}}	\leq \int\limits_{0}^{1} \|\txtD G(\tau\chi_{R}(U)_r + (1-\tau)\chi_R(\widetilde{U})_t)\|_{\cB_{\alpha-\sigma}}~\txtd \tau~ \|\chi_{R}(U)_r -\chi_{R}(\widetilde{U})_r\|_{\cB_{\alpha-\sigma}}\nonumber\\
	&\leq \| \txtD G\|_{\cL(\cB_{\alpha},\cB_{\alpha-\sigma})} \max \{\|\chi_{R}(U)_r\|_{\cB_\alpha}, \|\chi_{R}(\widetilde{U})_r\|_{\cB_\alpha}\} \|\chi_R(U)_r - \chi_R(\widetilde{U})_r\|_{\cB_{\alpha-\delta}}\nonumber\\
	& \leq C R \|\chi_R(U)_r - \chi_R(\widetilde{U})_r\|_{\cB_\alpha} \leq C R \|U-\tilde{U}\|_{\infty,\alpha} \leq CR \|U-\tilde{U},U'-\tilde{U}'\|_{\cD}.
	\end{align}
	For the terms containing the Gubinelli derivative we recall that for $t\in[0,1]$:
	\begin{align*}
	(G_{R}(U)(t))' = \txtD G (\chi_{R}(U)_t) (\chi_{R}(U)_t)' = \txtD G (U_tf (\|U,U'\|/R)) U'_t f(\|U,U'\|/R).
	\end{align*}
	
	We now investigate $\|(G_{R}(U) -G_R(\widetilde{U}))'\|_{\gamma,\alpha-2\gamma-\sigma}$. To this aim we let $0\leq s \leq t \leq 1$ and consider
	\begin{align*}
	&[G_R (U)(t) -G_R(\widetilde{U})(t) - (G_{R}(U)(s) -G_R(\widetilde{U})(s)) ]'\\
	& =[\txtD G(\chi_{R}(U)_t)(\chi_{R}(U)_t)' - \txtD G(\chi_R(\widetilde{U})_t)(\chi_R(\widetilde{U})_t)' - (\txtD G(\chi_R(U)_s ) (\chi_R(U)_s)' - \txtD G(\chi_R(\widetilde{U})_s)(\chi_R(\widetilde{U})_s)' )] \\
	& = \txtD G(U_t f(\|U,U'\|/R) )U'_t f(\|U,U'\|/R) - \txtD G(\widetilde{U}_t f (\|\widetilde{U},\widetilde{U}'\|/R) ) \widetilde{U}'_t  f (\|\widetilde{U},\widetilde{U}'\|/R) \\
	&   - (\txtD G(U_s f(\|U,U'\|/R) )U'_s f(\|U,U'\|/R) - \txtD G(\widetilde{U}_s f (\|U,U'\|/R) ) \widetilde{U}'_s  f (\|\widetilde{U},\widetilde{U}'\|/R)).
	\end{align*}
	Therefore, we have to estimate in $\cB_{\alpha-2\gamma-\sigma}$ the last expression. This further results in 
	\begin{align*}
	&\|[\txtD G(\chi_{R}(U)_t)(\chi_{R}(U)_t)' - \txtD G(\chi_R(\widetilde{U})_t)(\chi_R(\widetilde{U})_t)' - (\txtD G(\chi_R(U)_s ) (\chi_R(U)_s)' - \txtD G(\chi_R(\widetilde{U})_s)(\chi_R(\widetilde{U})_s)' )]\|\\
	& \leq \| \txtD G(\chi_R(U)_t) - \txtD G(\chi_R(U)_s) - \txtD G(\chi_R(\widetilde{U})_t) + \txtD G (\chi_R(\widetilde{U})_s) \| \\ &\hspace*{40 mm}\cdot\|(\chi_R(U)_t)' + (\chi_R(U)_s)' + (\chi_R(\widetilde{U}_t))' + (\chi_R(\widetilde{U})_s)' \|\\
	& ~~+ \| \txtD G(\chi_R(U)_t) - \txtD G(\chi_R(U)_s) + \txtD G(\chi_R(\widetilde{U})_t) - \txtD G (\chi_R(\widetilde{U})_s) \| \\& \hspace*{40 mm}\cdot\|(\chi_R(U)_t)' + (\chi_R(U)_s)' - (\chi_R(\widetilde{U}_t))' - (\chi_R(\widetilde{U})_s)' \|\\
	&~~ + \| \txtD G(\chi_R(U)_t) + \txtD G(\chi_R(U)_s) -\txtD G(\chi_R(\widetilde{U})_t) - \txtD G (\chi_R(\widetilde{U})_s) \| \\ &\hspace*{40 mm}\cdot\|(\chi_R(U)_t)' - (\chi_R(U)_s)' + (\chi_R(\widetilde{U}_t))' - (\chi_R(\widetilde{U})_s)' \|\\
	& ~~ + \| \txtD G(\chi_R(U)_t) + \txtD G(\chi_R(U)_s) + \txtD G(\chi_R(\widetilde{U})_t) + \txtD G (\chi_R(\widetilde{U})_s) \| \\&\hspace*{40 mm} \cdot\|(\chi_R(U)_t)' - (\chi_R(U)_s)' - (\chi_R(\widetilde{U}_t))' + (\chi_R(\widetilde{U})_s)' \|\\
	& := I + II + III +IV.
	\end{align*}
	
	We analyze each of the four terms above separately. For the first one we obtain 
	\begin{align*}
	\|I\|_{\cB_{\alpha-2\gamma-\sigma}}&\leq \|\txtD^2 G \|_{\cL(\cB^{\otimes 2}_{\cB_{\alpha-2\gamma},\cB_{\alpha-2\gamma-\sigma}})}R~ \|U-\widetilde{U}, U'-\widetilde{U}'\|_{\cD}~ [\|\chi_R(U)\|_{\infty,\alpha-2\gamma} +  \|\chi_R(\tilde{U})\|_{\infty,\alpha-2\gamma}]\\
	& \leq  C R^2 \|U-\widetilde{U}, U'-\widetilde{U}'\|_{\cD}.
	\end{align*}
	This estimate relies on applying~\eqref{g1} replacing $G$ by $\txtD G$ and regarding that $\txtD^2G(0)=0$. More precisely, we have
	\begin{align}
	\begin{split}\label{subg1}
	&| \txtD G(\chi_{R}(U)_t)  - \txtD G(\chi_{R}(\widetilde{U})_t) - [\txtD G(\chi_{R}(U)_s)  - \txtD G(\chi_{R}(\widetilde{U})_s)] |\\
	& \leq \|\txtD G\|_{\cL(\cB^{\otimes 2}_{\alpha-2\gamma}, \cB_{\alpha-2\gamma-\sigma} )} \max\{ \|\chi_{R}(U)_t\|, \|\chi_{R}(\widetilde{U})_t\|_{\cB_{\alpha-2\gamma}} \}~ \|\chi_{R}(U)_t - \chi_{R}(\widetilde{U})_t -  [ \chi_{R}(U)_s -\chi_{R}(\widetilde{U})_s ]\|_{\cB_{\alpha-2\gamma}}  \\
	&\hspace*{10 mm}+ C ~\|\chi_{R}(U)_t -\chi_{R}(\widetilde{U}_t)\|_{\alpha-2\gamma}~ [\|\chi_{R}(U)_t -\chi_{R}(U)_s\|_{\alpha-2\gamma} + \|\chi_{R}(\widetilde{U})_t -\chi_{R}(\widetilde{U})_s\|_{\alpha-2\gamma} ]\\
	\end{split}\\
	&\leq C R \|\chi_R (U) -\chi_{R}(\widetilde{U})\|_{\gamma,\alpha-2\gamma} + C \|\chi_{R}(U) - \chi_{U}(\widetilde{U})\|_{\infty,\alpha-2\gamma} ( \|\chi_{R}(U))\|_{\gamma,\alpha-2\gamma} +\|\chi_{R}(\widetilde{U})\|_{\gamma,\alpha-2\gamma} )\nonumber\\
	& \leq C R \|U-\widetilde{U}, U'-\widetilde{U}'\|_{\cD}. \nonumber
	\end{align}
Here we use	\eqref{est:hoelder:y} for $\theta=2\gamma$ to  control $\|\chi_R(U)\|_{\gamma,\alpha-2\gamma}$. Regarding these deliberations, we completed the estimate of $I$.\\
We now focus on $\|II\|_{\cB_{\alpha-2\gamma-\sigma}}$ which analogously results in 
	\begin{align*}
	\|II\|_{\cB_{\alpha-2\gamma-\sigma}}&\leq  C \|\txtD G(\chi_R(U)) -\txtD G(\chi_R(\widetilde{U}))\|_{\gamma, \alpha-2\gamma-\sigma} [\|\chi_R(U)\|_{\infty,\alpha-2\gamma} +\|\chi_R(\widetilde{U})\|_{\infty,\alpha-2\gamma} ]\\
	& \leq C R \|U - \widetilde{U}, U'-\widetilde{U}'\|_{\cD}.
	\end{align*} 
	Analogously, for the third one we have
	\begin{align*}
	\|III\|_{\cB_{\alpha-2\gamma-\sigma}}& \leq C \|\txtD G(\chi_R(U)) -\txtD G(\chi_R(\widetilde{U})) \|_{\infty,\alpha-2\gamma-\sigma} [\|\chi_R(U)\|_{\gamma,\alpha-2\gamma} +\|\chi_R(\widetilde{U})\|_{\gamma,\alpha-2\gamma}]\\
	& \leq C R \|U - \widetilde{U}, U'-\widetilde{U}'\|_{\cD}. 
	\end{align*}
	Finally, we easily estimate the fourth term regarding that $\txtD^2 G(0)=0$ as
	\begin{align*}
	\|IV\|_{\cB_{\alpha-2\gamma-\sigma}} &\leq C \sup\limits_{t\in[0,1]}\max\{\|\txtD G (\chi_R(U)_t)\|_{\alpha-2\gamma-\sigma},\|\txtD G(\chi_R(\widetilde{U})_t)\|_{\alpha-2\gamma-\sigma} \} \\ & \hspace*{80 mm}\cdot\|(\chi_R(U))' - (\chi_R(\widetilde{U}))'\|_{\gamma,\alpha-2\gamma}\\
	& \leq C R \|U - \widetilde{U}, U'-\widetilde{U}'\|_{\cD}.
	\end{align*}

	We now focus on the remainder. Recalling that $(U,U')\in \cD$, we know that
	$$U_t = U_s + U'_s W_{s,t} + R^{U}_{s,t}.$$ Therefore,
	$$U_t f(\|U,U'\|/R) = U_s f(\|U,U'\|/R) + U'_s f(\|U,U'\|/R) W_{s,t} + R^{U}_{s,t}f(\|U,U'\|/R),$$
	which gives us $R^{\chi_{R}(U)} = R^{U} f(\|Y,Y'\|/R)$.
	Consequently,
	\begin{align}\label{rem:cutoff}
	(R^{G_R(U)})_{s,t}= G(\chi_{R}(U)_t) - G(\chi_R(U)_s) - 
	\txtD G(\chi_{R}(U)_s)(\chi_{R}(U))_{s,t} + \txtD G (\chi_{R}(U)_s)R^{U}_{s,t} f(\|U,U'\|/R) .
	\end{align}
	This means that we have to estimate 
	\begin{align}
	R ^{G_{R}(U)}_{s,t} - R^{G_{R}(\widetilde{U})}_{s,t} &= G(\chi_{R}(U)_t) - G(\chi_R(U)_s) - 
	\txtD G(\chi_{R}(U)_s)(\chi_{R}(U))'_{s,t}\nonumber \\&- [G(\chi_{R}(\widetilde{U})_t) - G(\chi_R(\widetilde{U})_s) - 
	\txtD G(\chi_{R}(\widetilde{U})_s)(\chi_{R}(\widetilde{U}))'_{s,t}]\nonumber\\
	& + \txtD G (\chi_{R}(U)_s)R^{U}_{s,t} f(\|U,U'\|/R)  - \txtD G (\chi_{R}(\widetilde{U})_s)R^{\widetilde{U}}_{s,t} f(\|\widetilde{U},\widetilde{U}'\|/R). \label{last:rem}
	\end{align}
	The terms in~\eqref{last:rem} easily result in 
	\begin{align*}
	&\|\txtD G (\chi_R(U)_s) R^{\chi_R(U)}_{s,t} -\txtD G (\chi_R(\widetilde{U})_s) R^{\chi_R(\widetilde{U})}_{s,t}\|_{\cB_{\alpha-2\gamma-\sigma}} \\
	& \leq \|\txtD G (\chi_R(U)_s) [R^{\chi_R}(U)_{s,t} - R^{\chi_R(\widetilde{U})}_{s,t} ] \|_{\cB_{\alpha-2\gamma-\sigma}}+ \|[\txtD G (\chi_R(U)_s) -\txtD G(\chi_R(\widetilde{U})_s)] R^{\chi_R(\widetilde{U})}_{s,t}\|_{\cB_{\alpha-2\gamma-\sigma}}\\
	& \leq C \| \txtD G\|_{\cL(\cB_{\alpha-2\gamma},\cB_{\alpha-2\gamma-\sigma})}  \|\chi_R(U)_s\|_{\cB_{\alpha-2\gamma}} \| R^{\chi_R}(U) - R^{\chi_R(\widetilde{U})}\|_{2\gamma,\alpha-2\gamma}\\& + \|\txtD^2 G \|_{\cL(\cB^{\otimes 2}_{\alpha-2\gamma},\cB_{\alpha-2\gamma-\sigma})}  \|R^{\chi_R(\widetilde{U})}\|_{2\gamma,\alpha-2\gamma} \|\chi_R(U)_s -\chi_R(\widetilde{U})_s\|_{\alpha-2\gamma}\\
	& \leq C R \|R^{U} -R^{\widetilde{U}}\|_{2\gamma,\alpha-2\gamma} + C R \|U - \widetilde{U}\|_{\infty,\alpha-2\gamma}\\
	& \leq C R \|U-\tilde{U},U'-\tilde{U}'\|_{\cD}.
	\end{align*}
	To estimate the $2\gamma$-H\"older norm in $\cB_{\alpha-2\gamma}$ of the expressions~\eqref{last:rem} appearing in the difference of two remainders, we firstly recall the identity
	\begin{align}
&	G(U_t) - G(U_s) - \txtD G(U_s)U_{s,t} =\int\limits_{0}^{1} \txtD G (r U_t +(1-r) U_s )~\txtd r~ U_{s,t} - \txtD G(U_s)U_{s,t} \label{dg} \\
	& = \int\limits_{0}^{1} [\txtD G (r U_t +(1-r) U_s ) - \txtD G(U_s)] U_{s,t}~\txtd r\nonumber\\
	& = \int\limits_{0}^{1} \int\limits_{0}^1 \txtD^2 U [\tau (r U_t +(1-r) U_s)+(1-\tau)U_s ] (rU_t+(1-r)U_s-U_s) U_{s,t}~\txtd\tau~ \txtd r\nonumber\\
	&=\int\limits_{0}^1\int\limits_{0}^1 r \txtD^2 G [\tau (r U_t +(1-r) U_s)+(1-\tau)U_s ]~\txtd\tau~\txtd r ~(U_{s,t}\otimes U_{s,t}).\nonumber
	\end{align}
	Applying~\eqref{dg} twice, one obtains the following inequality (see p.~2716 in~\cite{HuNualart})
	\begin{align}
	&\|G(\chi_{R}(U)_t) - G(\chi_R(U)_s) - \txtD G(\chi_{R}(U)_s)(\chi_{R}(U))_{s,t} \\
	&- (G(\chi_{R}(\widetilde{U})_t) - G(\chi_R(\widetilde{U})_s) - DG(\chi_{R}(\widetilde{U})_s)(\chi_{R}(\widetilde{U}))_{s,t}) \|_{\cB_{\alpha-2\gamma-\sigma}} \nonumber\\
	\begin{split}
	& \leq \|\txtD^2 G\|_{\cL(\cB^{\otimes 2}_{\alpha-2\gamma,\cB_{\alpha-2\gamma-\sigma})}}\|\chi_{R}(U)_t\|_{\cB_{\alpha-2\gamma}} ~[\|\chi_{R}(U)_t -\chi_{R}(U)_s\|_{\cB_{\alpha-2\gamma}} +\|\chi_{R}(\widetilde{U})_t -\chi_{R}(\widetilde{U})_s \|_{\cB_{\alpha-2\gamma}} ]\\ & \hspace*{70 mm}\cdot\|\chi_{R}(Y)_t -\chi_{R}(Y)_s - (\chi_{R}(\widetilde{Y})_t -\chi_{R}(\widetilde{Y})_s)\|_{\cB_{\alpha-2\gamma}}\label{2}\\
	&+ \|\txtD^3 G\|_{\cL(\cB^{\otimes 3}_{\alpha-2\gamma,\cB_{\alpha-2\gamma-\sigma})}} \|\chi_{R}(\widetilde{U})_t -\chi_{R}(\widetilde{U})_s\|^2_{\cB_{\alpha-2\gamma}} ~[\|\chi_{R}(U)_t -\chi_{R}(\widetilde{U})_t \|_{\cB_{\alpha-2\gamma}} + \|\chi_{R}(U)_s -\chi_{R}(\widetilde{U})_s \|_{\cB_{\alpha-2\gamma}}].
	\end{split}
	\end{align}
	Using that $\| \cdot\|_{\alpha-2\gamma} \leq C \|\cdot\|_{\alpha-\gamma}\leq C \|\cdot\|_\alpha$, the previous inequality further leads to 
	\begin{align*}
	&\|G(\chi_{R}(U)_t) - G(\chi_R(U)_s) - \txtD G(\chi_{R}(U)_s)(\chi_{R}(U))_{s,t}\\& - (G(\chi_{R}(\widetilde{U})_t) - G(\chi_R(\widetilde{U})_s) - \txtD G(\chi_{R}(\widetilde{U})_s)(\chi_{R}(\widetilde{U}))_{s,t}) \|_{\cB_{\alpha-2\gamma-\sigma}}\\
	& \leq C R ~ [\|U\|_{\gamma,\alpha-\gamma} + \|\widetilde{U}\|_{\gamma,\alpha-\gamma}]~ \|U-\widetilde{U}\|_{\gamma,\alpha-\gamma} + C\|\chi_R(\widetilde{U})\|^{2}_{\gamma,\alpha-2\gamma} \|U-\widetilde{U}\|_{\infty,\alpha} \\
	&\leq  C R^2 \|U-\widetilde{U}, U'- \widetilde{U}'\|_{\cD}.
	\end{align*} 
	Here we used that $(\chi_R(\tilde{U}), \chi_R(\tilde{U})')\in\cD$ and~\eqref{est:hoelder:y} for $\theta=2\gamma$
	 in order to estimate the second term of the previous inequality as follows
	\begin{align*}
	\|\chi_R(\tilde{U})\|_{\gamma,\alpha-2\gamma}& \leq \|\chi_R(\tilde{U})'\|_{\infty,\alpha-2\gamma} \|W\|_{\gamma}+ \|R^{\chi_R(\tilde{U})}\|_{\gamma,\alpha-2\gamma} \\
	& \leq \|\chi_R(\tilde{U})'\|_{\infty,\alpha-\gamma} \|W\|_{\gamma}+ \|R^{\chi_R(\tilde{U})}\|_{\gamma,\alpha-2\gamma} \leq C (1+\|W\|_{\gamma}) \|\chi_R(\tilde{U}),\chi_R(\tilde{U})'\|_{\cD} \leq C R.
	\end{align*}	
	Putting all these estimates together proves the statement.
	\end{proof}\\

 We now replace $F$ and $G$ in~\eqref{rpde} by $F_R$ and $G_{R}$ and show that the modified rough PDE obtained by this procedure has a unique solution in $\cD$. For notational simplicity we introduce
 for $(Y,Y')\in \cD$ and $t\in[0,1]$
 \begin{align}\label{tr}
 T_{R}({W}, Y,Y')[t]:&=  \int\limits_{0}^{t} S_{t-r} 
 \overline{F}(\chi_R(Y))(r) ~\txtd r + \int\limits_{0}^{t} S_{t-r} \overline{G}(\chi_R(Y))(r) 
 ~\txtd \bm{{W}}_{r}\\
 & = \int\limits_{0}^{t} S_{t-r} 
 F_R(Y)(r) ~\txtd r + \int\limits_{0}^{t} S_{t-r} G_R(Y)(r) 
 ~\txtd \bm{{W}}_{r} \nonumber,
 \end{align}
 with Gubinelli derivative $(T_{R}(W,Y,Y'))'=G_{R}(Y)$.
 Collecting the estimates derived in Lemmas~\ref{drift:cutoff} and~\ref{diffusion:cutoff} leads to the following result.
 
\begin{theorem}\label{fpr}
	There exists a unique solution $(Y,Y')\in \cD$ of~\eqref{rpde} satisfying $Y'=G_R(Y)$ such that for $t\in[0,1]$:
	\begin{align*}
	Y_t = S_t \xi + \int\limits_{0}^{t} S_{t-r} 
	F_R(Y)(r) ~\txtd r + \int\limits_{0}^{t} S_{t-r} G_R(Y)(r) 
	~\txtd \bm{{W}}_{r}.
	\end{align*} 
\end{theorem}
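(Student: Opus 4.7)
The plan is to realise $(Y,Y')$ as the unique fixed point of the solution map
\begin{equation*}
\cM_\xi(Y,Y') := \Big( S_\cdot \xi + T_R(W,Y,Y'),\; G_R(Y) \Big)
\end{equation*}
on a sufficiently short time interval $[0,\tau]\subset[0,1]$, and then to concatenate finitely many local solutions to cover $[0,1]$. The key observation making the plan work is that, thanks to the cut-off, the Lipschitz constants $L_F(R)$ and $L_G(R)$ in Lemmas~\ref{drift:cutoff} and~\ref{diffusion:cutoff} depend only on $R$, $F$, $G$, $\chi$ and $\rho_\gamma(\W)$, so the local existence time $\tau$ can be chosen independently of the initial datum.

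First I would verify that $\cM_\xi$ maps $\cD([0,\tau])$ into itself with output Gubinelli derivative $G_R(Y)$. The affine contribution $S_\cdot \xi$ is a controlled rough path with vanishing Gubinelli derivative by~\eqref{hg:1}-\eqref{hg:2}. Lemma~\ref{diffusion:cutoff} applied with $(\tilde U,\tilde U')=(0,0)$, together with $G(0)=\txtD G(0)=0$, gives $(G_R(Y),(G_R(Y))')\in D^{2\gamma}_{W,\alpha-\sigma}([0,\tau])$. Corollary~\ref{cor:higherreg} then promotes the rough convolution $\int_0^\cdot S_{\cdot-r}G_R(Y)(r)\,\txtd\W_r$ back to $\cD([0,\tau])$ with Gubinelli derivative $G_R(Y)$, while the bounds used in the proof of Lemma~\ref{drift:cutoff} place the deterministic convolution in $\cD([0,\tau])$ with vanishing Gubinelli derivative.

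Next I would extract a contraction. Because the initial datum cancels in $\cM_\xi(Y,Y')-\cM_\xi(\tilde Y,\tilde Y')$, combining Lemma~\ref{drift:cutoff} with Lemma~\ref{diffusion:cutoff} inserted into Corollary~\ref{cor:higherreg} yields a bound of the form
\begin{equation*}
\big\|\cM_\xi(Y,Y')-\cM_\xi(\tilde Y,\tilde Y')\big\|_{\cD([0,\tau])}
\leq C_R\, \tau^{\kappa}\,\big(1+\rho_\gamma(\W)\big)\,
\big\|(Y,Y')-(\tilde Y,\tilde Y')\big\|_{\cD([0,\tau])},
\end{equation*}
where $\kappa>0$ arises jointly from the factor $\tau^{\gamma-\sigma}$ in Corollary~\ref{cor:higherreg} and from the positive powers of $\tau$ built into~\eqref{f1}-\eqref{f2}, and $C_R$ depends only on $R$, $F$, $G$ and $\chi$. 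Choosing $\tau=\tau(R,F,G,\rho_\gamma(\W))>0$ small enough makes the prefactor strictly less than $1/2$. The same estimate, applied with $(\tilde Y,\tilde Y')=(S_\cdot\xi, 0)$, provides the invariance of a closed ball in $\cD([0,\tau])$ of radius of order $\|\xi\|_{\cB_\alpha}+R$, and Banach's fixed-point theorem produces a unique $(Y,Y')\in\cD([0,\tau])$ satisfying the mild equation on $[0,\tau]$ and $Y'=G_R(Y)$.

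Finally, since $\tau$ does not depend on $\xi$, one can restart the construction with initial condition $Y_\tau\in\cB_\alpha$ on $[\tau,2\tau]$ and iterate finitely many times to extend the solution uniquely to all of $[0,1]$. The main technical obstacle is the careful accounting needed to extract a positive power $\tau^\kappa$ simultaneously from all four summands of the norm~\eqref{g:norm}: one must combine the time-regularisation provided by the analytic semigroup, the rough-integral gain of Corollary~\ref{cor:higherreg}, and the interpolation inequality~\eqref{interpolation:ineq} in order to reconcile the loss of spatial regularity caused by $G$ (which takes values in $\cB_{\alpha-\sigma}$) with the target regularity $\cB_\alpha$.
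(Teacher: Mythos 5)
Your overall strategy differs from the paper's in a way that, as written, does not close. The paper performs the fixed-point argument directly on the whole interval $[0,1]$ and obtains the contraction \emph{not} from a small time horizon but from the smallness of the truncation radius: by Lemmas~\ref{drift:cutoff} and~\ref{diffusion:cutoff} the map $(Y,Y')\mapsto (T_R(W,Y,Y'),G_R(Y))$ is Lipschitz with constant of order $\rho_\gamma(\W)\,(L_F(R)+L_G(R))$, and $L_F(R),L_G(R)\to0$ as $R\to0$, so choosing $R$ small gives the contraction; this is exactly the point of the remark following the theorem, which contrasts the argument with the small-time-horizon proof of~\cite[Theorem~5.1]{GHN}. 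Your proposal instead relies on extracting an overall factor $\tau^{\kappa}$ in the space $\cD=D^{2\gamma}_{W,\alpha}([0,\tau])$, and this estimate is not available: the $\cD$-norm~\eqref{g:norm} of the difference of images contains the Gubinelli-derivative component $G_R(Y)-G_R(\widetilde Y)$ (in sup- and $\gamma$-H\"older norms) and, via Corollary~\ref{cor:higherreg}, the initial-value terms $|U_0|+|U_0'|$ with $U=G_R(Y)-G_R(\widetilde Y)$, none of which carries a positive power of $\tau$. Because the cut-off is path dependent (it involves $\|Y,Y'\|_{\cD}$ over the whole interval), these terms do not even vanish at $t=0$ when $Y_0=\widetilde Y_0$, $Y_0'=\widetilde Y_0'$; and the usual device of trading a sup-norm for a H\"older seminorm times $\tau^{\gamma}$ costs two levels of spatial regularity ($\alpha\mapsto\alpha-2\gamma$) in this parabolic scale, which is precisely why~\cite{GHN} must run the fixed point in $\cD^{2\gamma'}_{W,\alpha}$ with $\gamma<\gamma'<\sigma$ rather than in $\cD^{2\gamma}_{W,\alpha}$. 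So either you adopt that modification, or you must let the contraction come from $R$, as the paper does; a bound of the form $C_R\,\tau^{\kappa}(1+\rho_\gamma(\W))\|\cdot\|_{\cD}$ with no $R$-small constant in front of the non-time-regularized terms is not provable.

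The concatenation step has a second, independent problem: the statement concerns the \emph{truncated} equation, whose coefficients $F_R(Y)(t)=F\bigl(Y_t\,f(\|Y,Y'\|_{\cD}/R)\bigr)$ and $G_R(Y)(t)$ are non-local in time through the norm $\|Y,Y'\|_{\cD}$ over the full interval on which the problem is posed. A solution constructed on $[\tau,2\tau]$ with the cut-off evaluated on $[\tau,2\tau]$ does not solve the equation whose cut-off is evaluated on $[0,1]$, so gluing local solutions does not produce the fixed point claimed in Theorem~\ref{fpr}, nor does local uniqueness transfer to uniqueness for the path-dependent problem on $[0,1]$. (Restarting would be legitimate for the original equation with globally Lipschitz $F,G$, but not for $F_R,G_R$.) This path dependence is exactly the ``main novelty'' the paper's proof is designed to handle, and it is the reason the paper sets up a single global fixed point on $[0,1]$ with smallness coming from~\eqref{k}.
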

\begin{proof}
	The proof relies on Banach's fixed-point theorem and the main novelty here is to incorporate the path-dependence of $F_R$ and $G_R$ which is possible due to Lemma~\ref{drift:cutoff} and~\ref{diffusion:cutoff}.
	For the term containing the initial data we have that $(S_t\xi,0)\in\cD$ since
	\begin{align*}
	\|S_t\xi\|_{\cB_\alpha} \leq \|S_t\|_{\cL(\cB_\alpha,\cB_\alpha)}\|\xi\|_{\cB_\alpha} \leq C \|\xi\|_{\cB_\alpha}
	\end{align*}
	and  
	\begin{align*}
	\|R^{S_\cdot\xi}\|_{\cB_{\alpha-2\gamma}} &=\| S_t \xi - S_s \xi\|_{\cB_{\alpha-2\gamma}} = \|(S_{t-s}-\text{Id})S_s\xi\|_{\cB_{\alpha-2\gamma}} \\
	& \leq \|S_{t-s}-\text{Id}\|_{\cL(\cB_\alpha,\cB_{\alpha-2\gamma})} \|S_s\xi\|_{\cB_\alpha}\\
	& \leq C (t-s)^{2\gamma} \|\xi\|_{\cB_\alpha}.
	\end{align*}
		Let $(Y,Y'), (\widetilde{Y},\widetilde{Y}')\in \cD $ with $Y_0=\widetilde{Y}_0=\xi$ and $Y'_0=\tilde{Y}'_0$.
By Lemma~\ref{drift:cutoff} we have that
\begin{align*}
	\left\|\int\limits_{0}^{\cdot} S_{\cdot-r} (F_R(Y)(r) - F_R(\widetilde{Y})(r) )~\txtd r, 0 \right\|_{\cD} \leq L_F(R) \|Y-\tilde{Y}\|_{\cD}.
\end{align*}
Furthermore, Corollary~\ref{cor:higherreg} combined with Lemma~\ref{diffusion:cutoff} entails
\begin{align*}
	\left \| \int\limits_{0}^{\cdot} S_{\cdot - r} 
	(G_R(Y)(r) -G_R(\widetilde{Y})(r )) ~\txtd \W_{r}, G_R(Y)-G_R(\widetilde{Y})\right\|_{\cD} 
\lesssim {\rho_\gamma(\W)} L_G(R) \|Y-\widetilde{Y}, Y'-\widetilde{Y}'\|_{\cD}.
\end{align*}
In conclusion we obtain
\begin{align}
	&	\left\|\int\limits_{0}^{\cdot} S_{\cdot-r} (F_R(Y)(r) - F_R(\widetilde{Y})(r) )~\txtd r+ \int\limits_{0}^{\cdot} S_{\cdot - r} 
	(G_R(Y)(r) -G_R(\widetilde{Y})(r )) ~\txtd \W_{r}, 
	G_R(Y) - G_R(\widetilde{Y}) \right\|_{\cD}\nonumber\\ 
	&\lesssim {\rho_\gamma(\W)}  \Big(L_R(F) + L_G(R) \Big) \|Y-\widetilde{Y}, Y'-\widetilde{Y}'\|_{\cD}.
\end{align}
		
			Setting $\widetilde{Y}\equiv 0$ and using that $F_R(0)=G_R(0)=0$,  we see from the previous deliberations that $(Y,Y')\mapsto (T_R(W,Y,Y'), G_R(Y))$ maps $\cD$ into itself. Furthermore, by choosing $R$ small enough and regarding that $L_F(R)\to 0$ and $L_G(R)\to 0$ as $R\to 0$, we obtain that the mapping $(Y,Y')\mapsto (T_R(W,Y,Y'), G_R(Y))$ is a contraction. Therefore, Banach's fixed-point theorem proves the statement.
	\qed
	\end{proof}
\begin{remark}
	\begin{itemize}
		\item [1).] This argument is slightly different from the proof of~\cite[Theorem 5.1]{GHN} where one takes $\gamma'$ such that $0<\gamma<\gamma'<\sigma$ and performs the fixed-point argument in $\cD^{2\gamma'}_{W,\alpha}$ choosing the time horizon small enough. 
		\item [2).] Due to the assumptions ({\bf F}) and ({\bf G}) the solution of the rough PDE~\eqref{rpde} (and particularly of truncated one) exists globally in time~\cite{HN21}. 
	\end{itemize}

\end{remark}

Going back to our setting, the next aim is to characterize the parameter $R$ required in order to
decrease the Lipschitz constants of $\overline{F}$ and $\overline{G}$ using $\chi_R$. This fact is required for the Lyapunov-Perron method. As already seen we have to choose $R$ as small as possible. Since in our deliberations, it is always required that $R\leq 1$ and $L_F(R)\to 0$, $L_G(R)\to 0$ as $R\to 0$,
collecting the previous estimates and regarding the structure of the constants $L_F(\cdot)$ and $L_G(\cdot)$  entails
\begin{align}
&	\left\|\int\limits_{0}^{\cdot} S_{\cdot-r} (F_R(Y)(r) - F_R(\widetilde{Y})(r) )~\txtd r+ \int\limits_{0}^{\cdot} S_{\cdot - r} 
(G_R(Y)(r) -G_R(\widetilde{Y})(r )) ~\txtd \W_{r}, 
G_R(Y) - G_R(\widetilde{Y}) \right\|_{\cD}\nonumber\\ 
&\leq  \Big(C_F R + C_G \widetilde{C}[\rho_\gamma(\W)] R\Big) \|Y-\widetilde{Y}, Y'-\widetilde{Y}'\|_{\cD},\label{e:finall}
\end{align}
for constants $C_F>0$ and $C_G>0$ which depend on $F$, $G$, their derivatives and on the cuf-off function $\chi$. Furthermore the constant $\widetilde{C}>0$ incorporates the dependence of the random input contained by $\rho_\gamma(\W)$.
As commonly met in the theory of random dynamical systems~\cite{LuSchmalfuss, GarridoLuSchmalfuss}, since all the estimates depend on the random input, it is meaningful to employ a cut-off technique for a {\em random variable}, i.e.~$R=R(W)$. Such an argument will also be used here as follows.\\

We fix $K>0$ and regarding~\eqref{e:finall}, we let 
$\widetilde{R}({W})$ be the unique solution of 
\begin{equation}\label{k}
(C_F + C_G \widetilde{C}[\rho_\gamma(\W)])~\widetilde{R}({W}) =K
\end{equation}
and set 
\begin{align}\label{r}
R({W}):=\min\{\widetilde{R}({W}),1\}.
\end{align}
This means that if $R(W)=1$, we apply the cut-off procedure for $\|Y,Y'\|_{\cD}\leq 1/2$ or else for $\|Y,Y'\|_{\cD}\leq R(W)/2$. In conclusion, we work in the next sections with a modified version of the rough PDE~\eqref{rpde} (equivalently~\eqref{equiv:rde}), where the drift and diffusion coefficients $\overline{F}$ and $\overline{G}$ are replaced by $F_{R(W)}$ and $G_{R(W)}$. For notational simplicity, the $W$-dependence of $R$ will be dropped whenever there is no confusion.\\

Due to~\eqref{k} we conclude:
\begin{lemma}  
	Let $(Y,Y'), (\widetilde{Y},\widetilde{Y}')\in \cD$. We have
	\begin{equation}
	\label{wanttohave}
	\|T_{R}({W},Y,Y') - T_{R}({W},\widetilde{Y}, \widetilde{Y}'), (T_{R}({W},Y,Y') - T_{R}({W},\widetilde{Y}, \widetilde{Y}'))'
	\|_{\cD} \leq K \|  Y- \widetilde{Y}, 
	Y'-\widetilde{Y}'\|_{\cD}.
	\end{equation}
\end{lemma}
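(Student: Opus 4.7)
The plan is to obtain \eqref{wanttohave} by directly combining Lemma~\ref{drift:cutoff} and Lemma~\ref{diffusion:cutoff} with Corollary~\ref{cor:higherreg}, and then to absorb the resulting prefactor using the definition \eqref{k}--\eqref{r} of the random radius $R(W)$. First I would decompose
\[
T_{R}(W,Y,Y') - T_{R}(W,\widetilde{Y},\widetilde{Y}') = \int_{0}^{\cdot} S_{\cdot-r}\bigl(F_{R}(Y)(r)-F_{R}(\widetilde{Y})(r)\bigr)\,\txtd r + \int_{0}^{\cdot} S_{\cdot-r}\bigl(G_{R}(Y)(r)-G_{R}(\widetilde{Y})(r)\bigr)\,\txtd\mathbf{W}_{r},
\]
and note that the drift piece has Gubinelli derivative zero, while the rough integral piece has Gubinelli derivative $G_{R}(Y)-G_{R}(\widetilde{Y})$ by the definition of $T_R$. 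Hence the $\cD$-norm of the full difference is controlled by the sum of the $\cD$-norms of the two contributions.

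For the drift contribution, Lemma~\ref{drift:cutoff} directly delivers the bound by $L_{F}(R)\,\|Y-\widetilde{Y},Y'-\widetilde{Y}'\|_{\cD}$ with $L_{F}(R) = C_{F} R$. For the rough integral contribution, I would first apply Lemma~\ref{diffusion:cutoff} to see that $G_{R}$ sends $\cD$ Lipschitz continuously into $D^{2\gamma}_{W,\alpha-\sigma}$ with Lipschitz constant $L_{G}(R) = C_{G}\widetilde{C}[\rho_\gamma(\W)]\,R$, and then invoke Corollary~\ref{cor:higherreg} (applied with time horizon $T=1$ and regularity shift $\sigma$) to lift this estimate back to $D^{2\gamma}_{W,\alpha}$, picking up the factor $C(1+\rho_\gamma(\W))$. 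Since $(G_{R}(Y)-G_{R}(\widetilde{Y}))_{0}=0$ and $(G_{R}(Y)-G_{R}(\widetilde{Y}))'_{0}=0$ (because $Y_{0}=\widetilde{Y}_{0}=\xi$ and the cutoff is evaluated pointwise), the initial-value terms in Corollary~\ref{cor:higherreg} vanish and only the Lipschitz piece survives. Adding both contributions reproduces exactly \eqref{e:finall}:
\[
\bigl\| T_{R}(W,Y,Y') - T_{R}(W,\widetilde{Y},\widetilde{Y}'),\; G_{R}(Y) - G_{R}(\widetilde{Y}) \bigr\|_{\cD} \leq \bigl( C_{F}\,R + C_{G}\,\widetilde{C}[\rho_\gamma(\W)]\,R \bigr)\|Y-\widetilde{Y}, Y'-\widetilde{Y}'\|_{\cD}.
\]

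To conclude, I would invoke the definition of $R(W)$. If $\widetilde{R}(W)\leq 1$, then $R(W)=\widetilde{R}(W)$ and by \eqref{k} one has $(C_{F}+C_{G}\widetilde{C}[\rho_\gamma(\W)])R(W)=K$ exactly. If $\widetilde{R}(W)>1$, then $R(W)=1$ and $(C_{F}+C_{G}\widetilde{C}[\rho_\gamma(\W)])<K$, so still $(C_{F}+C_{G}\widetilde{C}[\rho_\gamma(\W)])R(W)<K$. In either case the prefactor in the estimate above is bounded by $K$, which is exactly \eqref{wanttohave}. The main obstacle in this proof is nothing new; it has already been paid for in Lemmas~\ref{drift:cutoff}--\ref{diffusion:cutoff}, where keeping track of the interpolation indices $\alpha$, $\alpha-\gamma$, $\alpha-2\gamma$, $\alpha-\sigma$ when composing cutoffs with Fréchet derivatives of $G$ is the delicate step; here one only needs to verify that the constants $C_F$ and $C_G\widetilde{C}[\rho_\gamma(\W)]$ produced there match those plugged into \eqref{k}.
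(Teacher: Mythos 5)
Your argument is exactly the paper's route: the paper obtains \eqref{wanttohave} by combining Lemma~\ref{drift:cutoff}, Lemma~\ref{diffusion:cutoff} and Corollary~\ref{cor:higherreg} into \eqref{e:finall} and then absorbing the prefactor via the definition \eqref{k}--\eqref{r} of $R(W)$, which is precisely what you do, including the case distinction $\widetilde{R}(W)\leq 1$ versus $\widetilde{R}(W)>1$. The only small caveat is that the lemma does not assume $Y_0=\widetilde{Y}_0$, but this is harmless since the initial-value terms from Corollary~\ref{cor:higherreg} are themselves bounded by $L_G(R)\|Y-\widetilde{Y},Y'-\widetilde{Y}'\|_{\cD}$ via Lemma~\ref{diffusion:cutoff}, so the same conclusion holds.
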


\begin{remark}
	\begin{itemize}
		\item [1)] We emphasize that one needs to control the derivatives of the diffusion coefficient $G$ in order to make the constants that depend on $G$ small after the cut-off procedure. Such restrictions are often met in the context of invariant manifolds for stochastic partial differential equations with nonlinear multiplicative noise, see e.g.~\cite{FuBloemker,GarridoLuSchmalfuss}.
		\item [2)]	If the random input is smoother, i.e.~$\gamma\in(1/2,1)$, it is enough to assume only $\txtD G(0)=0$. In this case, the stochastic convolution used in~\eqref{Gintegral} is defined as a Young integral.
	\end{itemize}
\end{remark}

\section{Random Dynamical Systems}
\label{sect:rds}

The main techniques and results established in the previous section using controlled rough paths are necessary in order to provide pathwise estimates for the solutions of~\eqref{rpde}. In this section, we provide some concepts from the random dynamical systems theory~\cite{Arnold}, which allow us to define a center manifold for~\eqref{rpde}.\medskip

The next concept is fundamental in the theory of random dynamical systems, since it describes a model of the driving noise.

\begin{definition} 
Let $(\Omega,\mathcal{F},\mathbb{P})$ be a probability space and $\theta:\mathbb{R}\times\Omega\rightarrow\Omega$ be a family of $\mathbb{P}$-preserving transformations (i.e.,~$\theta_{t}\mathbb{P}=\mathbb{P}$ for $t\in\mathbb{R}$) having the following properties:
\begin{itemize}
	\item[(i)] the mapping $(t,\omega)\mapsto\theta_{t}\omega$ is 
		$(\mathcal{B}(\mathbb{R})\otimes\mathcal{F},\mathcal{F})$-measurable, where 
		$\mathcal{B}(\cdot)$ denotes the Borel sigma-algebra;
		\item[(ii)] $\theta_{0}=\textnormal{Id}_{\Omega}$;
		\item[(iii)] $\theta_{t+s}=\theta_{t}\circ\theta_{s}$ for all 
		$t,s,\in\mathbb{R}$.
\end{itemize}
Then the quadrupel $(\Omega,\mathcal{F},\mathbb{P},(\theta_{t})_{t\in\mathbb{R}})$ is called a metric dynamical system.
\end{definition}

In this framework we recall that we consider one-dimensional noise, since the generalization to higher dimensions does not require any additional arguments. In our context, constructing a metric dynamical system is going to rely on constructing $\theta$ as a shift map on a canonical probability space $(\Omega,\cF,\P)$ as specified below.  Recalling that $\gamma\leq 1/2$ was fixed in Section~\ref{preliminaries}, we define for an $\gamma$-H\"older rough path $\textbf{W}=(W,\mathbb{W})$ and $\tau\in\mathbb{R}$ the time-shift $\Theta_{\tau}\textbf{W}:=(\Theta_{\tau}W,\Theta_{\tau}\mathbb{W})$ by
\begin{align*}
& \Theta_{\tau} W_t : =W_{t+\tau} - W_{\tau}\\
& \Theta_{\tau}\mathbb{W}_{s,t}:=\mathbb{W}_{s+\tau,t+\tau}.
\end{align*}
Note that the time shift naturally extends linearly to sums of rough paths, e.g., $\Theta_{\tau} W_{s,t}=W_{t+\tau} - W_{s+\tau}$. Furthermore, the shift leaves the path space invariant:

\begin{lemma}
\label{shift} 
Let $T_{1}, T_{2},\tau\in\mathbb{R}$, and $\bm{W}=(W,\mathbb{W})$ be an $\gamma$-H\"older rough path on $[T_{1},T_{2}]$ for $\gamma\in(1/3,1/2)$. Then the time-shift $\Theta_{\tau}\bm{W}= (\Theta_{\tau}W, \Theta_{\tau}\mathbb{W})$ is also an $\gamma$-H\"older rough path on $[T_{1}-\tau, T_{2}-\tau]$.
\end{lemma}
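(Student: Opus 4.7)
The proof is essentially a direct verification that each of the defining properties of a $\gamma$-H\"older rough path is preserved under the shift, so I would just check the three ingredients in turn and observe that the relevant norms are in fact unchanged.

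First, I would handle the first-level regularity. For $s,t\in[T_1-\tau,T_2-\tau]$ one has $s+\tau, t+\tau\in[T_1,T_2]$, and by definition
\begin{equation*}
(\Theta_\tau W)_{s,t} \;=\; (\Theta_\tau W)_t - (\Theta_\tau W)_s \;=\; W_{t+\tau}-W_{s+\tau} \;=\; W_{s+\tau,\,t+\tau}.
\end{equation*}
Since $|t-s| = |(t+\tau)-(s+\tau)|$, taking the supremum of $|W_{s+\tau,t+\tau}|/|t-s|^\gamma$ over $\Delta_{[T_1-\tau,T_2-\tau]}$ gives exactly the $\gamma$-H\"older seminorm of $W$ on $[T_1,T_2]$, which is finite by hypothesis. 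Hence $\Theta_\tau W\in C^{\gamma}([T_1-\tau,T_2-\tau];\R^d)$ with the same norm.

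Next, I would repeat the same reindexing for the second-level path: $(\Theta_\tau\WW)_{s,t} = \WW_{s+\tau,t+\tau}$, so
\begin{equation*}
\sup_{s\neq t}\frac{|(\Theta_\tau\WW)_{s,t}|}{|t-s|^{2\gamma}} \;=\; \sup_{s'\neq t'}\frac{|\WW_{s',t'}|}{|t'-s'|^{2\gamma}} \;<\;\infty,
\end{equation*}
so $\Theta_\tau\WW\in C^{2\gamma}$ on the shifted simplex with an identical $2\gamma$-H\"older norm. Finally, I would verify Chen's relation~\eqref{chen} for $\Theta_\tau\bm W$: for $s\le u\le t$ in $[T_1-\tau,T_2-\tau]$,
\begin{align*}
(\Theta_\tau\WW)_{s,t}-(\Theta_\tau\WW)_{s,u}-(\Theta_\tau\WW)_{u,t}
&= \WW_{s+\tau,t+\tau}-\WW_{s+\tau,u+\tau}-\WW_{u+\tau,t+\tau}\\
&= W_{s+\tau,u+\tau}\otimes W_{u+\tau,t+\tau}\\
&= (\Theta_\tau W)_{s,u}\otimes (\Theta_\tau W)_{u,t},
\end{align*}
where the middle equality is Chen's relation for $\bm W$ applied at the shifted points $s+\tau,u+\tau,t+\tau\in[T_1,T_2]$.

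There is no real obstacle here: the shift is an affine reparametrisation of the time axis, so all three properties (first-level H\"older bound, second-level H\"older bound, and Chen's relation) transport verbatim. The only thing to watch is that $s+\tau,u+\tau,t+\tau$ remain inside the original interval $[T_1,T_2]$, which is exactly what the choice of domain $[T_1-\tau,T_2-\tau]$ guarantees; everything else is relabelling.
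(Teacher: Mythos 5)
Your proof is correct and follows essentially the same route as the paper: the paper likewise treats the H\"older bounds for $\Theta_\tau W$ and $\Theta_\tau\mathbb{W}$ as immediate from the reindexing $s\mapsto s+\tau$ and devotes the argument to verifying Chen's relation at the shifted times, exactly as you do. Your version merely writes out the (unchanged) H\"older seminorms explicitly, which the paper declares obvious.
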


\begin{proof}
Let $s,u,t\in [T_{1}-\tau, T_{2}-\tau]$. The $\gamma$-H\"older-continuity of $\theta_{\tau}W$ and the $2\gamma$-H\"older continuity of $\theta_{\tau}\mathbb{W}$ are obvious. We only prove that Chen's relation~\eqref{chen} holds true. We have
\begin{align}
	\Theta_{\tau} \mathbb{W}_{s,t} - \Theta_{\tau} \mathbb{W}_{s,u} 
	-\Theta_{\tau}\mathbb{W}_{u,t} &= \mathbb{W}_{s+\tau,t+\tau} 
	- \mathbb{W}_{s+\tau,u+\tau} - \mathbb{W}_{u+\tau,t+\tau} \nonumber \\
	& = W_{s+\tau,u+\tau} \otimes W_{u+\tau,t+\tau} \label{chena},\\
	&=(W_{u+\tau} -W_{\tau} - W_{s+\tau} +W_{\tau} ) \otimes (W_{t+\tau} 
	- W_{\tau} -W_{u+\tau} + W_{\tau} ) \nonumber \\
	& = \Theta_{\tau} W_{s,u} \otimes \Theta_{\tau} W_{u,t}. \nonumber
\end{align}
where in~\eqref{chena} we use Chen's relation~\eqref{chen}.\qed
\end{proof}

Based upon~\cite{BailleulRiedelScheutzow} we consider the following concept:

\begin{definition}
	\label{rpc}
	Let $(\Omega,\mathcal{F},\mathbb{P},(\theta_{t})_{t\in\mathbb{R}})$ be a 
	metric dynamical system. We call $\textbf{W}=(W,\mathbb{W})$ a rough path 
	cocycle if the identity
	\begin{align*}
	\textbf{W}_{s,s+t}(\omega)=\textbf{W}_{0,t}(\theta_{s}\omega)
	\end{align*}
	holds true for every $\omega\in\Omega$, $s\in\mathbb{R}$ and $t\geq 0$.
\end{definition}

The previous definitions hint already at the fact that one may be able to just use as a probability space $\Omega$ a space of paths. A classical case, where we get via this construction a metric dynamical 
system and a rough cocycle is the fractional Brownian motion, see also~\cite[Section~6]{HesseNeamtu2}.

\begin{example}
\label{eq:Bmain}
As a concrete example for $W$ consider the fractional Brownian motion $B^{H}$ restricted to any compact interval $[-L,L]$ with $L\geq 1$ and for $H\in(1/3,1/2]$. This includes classical Brownian motion as the case when $H=1/2$. Then $B^H$ can be lifted to a $\gamma$-H\"older rough-path $\textbf{B}^{H}=(B^{H}, \mathbb{B}^{H})$ as discussed in \cite[Example.~10.11]{FritzHairer}, where
\begin{align*}
\mathbb{B}^{H}_{s,t}:= \int\limits_{s}^{t} B^{H}_{s,u} \otimes ~\txtd B^{H}_{u}.
\end{align*} 	
Gluing together lifts on compact time intervals, one may extend $\textbf{B}^{H}$ to the whole real line. Furthermore, we may consider the canonical probability space $(C_{0}(\mathbb{R}),\mathcal{B}(C_{0}(\mathbb{R})),\mathbb{P} )$, where $C_{0}(\mathbb{R})$ denotes the space of all real-valued continuous functions, which are $0$ in $0$, endowed with the compact open topology. The shift on the sample path space is given by
\begin{align}
	(\Theta_{\tau} f) (\cdot) := f(\tau+\cdot) - f(\tau),  
	~ \tau\in\mathbb{R}, \quad f\in C_{0}(\R).
\end{align}
Using Kolmogorov's Theorem or the Garsia-Rodemich-Rumsey inequality~\cite[A.2]{FritzVictoir} one can conclude that maps in $C^{\gamma}_{0}(\mathbb{R})$ have a finite $\gamma$-H\"older semi-norm on every compact interval $\mathbb{P}$-almost surely. Hence, we can restrict this metric dynamical system to the set $C^{\gamma}_{0}(\mathbb{R})$. For the metric dynamical system 
\benn
	(C_{0}^\gamma(\mathbb{R}),\mathcal{B}(C_{0}^\gamma(\mathbb{R})),\mathbb{P},(\Theta_{t})_{t\in\R} )=:(\Omega_B,\cF_B,\mathbb{P},
	(\Theta_{t})_{t\in\mathbb{R}})
\eenn
one may check that $\textbf{B}^{H}=(B^{H},\mathbb{B}^{H})$ represents a rough path cocycle as introduced in Definition~\ref{rpc}. 
\end{example}

Of course, virtually the identical construction of a path-space $(\Omega_W,\cF_W,\mathbb{P})$, also referred to as canonical probability space, can be carried out for more general $\gamma$-H\"older rough paths $\textbf{W}=(W,\mathbb{W})$ constructed from a (stochastic) process $W$, not just fractional 
Brownian motion, where the definition of a shift map is still as above, i.e., 
\benn
(\Theta_{\tau} W )(t) : =W_{t+\tau} - W_{\tau}.
\eenn
We now have the abstract definition of, as well as concrete examples for, metric dynamical systems for our problem modeling the underlying rough driving process. Now we have to also define the dynamical systems structure of the solution operator of our rough stochastic PDE. As a first step we recall the definition of a random dynamical system (RDS)~\cite{Arnold} and show that the solution operator of~\eqref{rpde} generates an RDS in $\cB_\alpha$.

\begin{definition}
\label{rds} 
A random dynamical system on a separable Banach space $\cX$ over a metric dynamical system $(\Omega,\mathcal{F},\mathbb{P},(\theta_{t})_{t\in\mathbb{R}})$ is a mapping $$\varphi:[0,\I)\times\Omega\times \cX\to \cX,
	\mbox{  } (t,\omega,x)\mapsto \varphi(t,\omega,x), $$
	which is $(\mathcal{B}([0,\I))\otimes\mathcal{F}\otimes
	\mathcal{B}(\cX),\mathcal{B}(\cX))$-measurable and satisfies:
	\begin{description}
		\item[(i)] $\varphi(0,\omega,\cdot{})=\textnormal{Id}_{\cX}$ 
		for all $\omega\in\Omega$;
		\item[(ii)]$ \varphi(t+\tau,\omega,x)=
		\varphi(t,\theta_{\tau}\omega,\varphi(\tau,\omega,x)), 
		\mbox{ for all } x\in \cX, ~t,\tau\in[0,\I),~\omega\in\Omega;$
		\item[(iii)] $\varphi(t,\omega,\cdot{}):\cX\to \cX$ is 
		continuous for all $t\in[0,\I)$ and all $\omega\in\Omega$.
	\end{description}
\end{definition}

The second property in Definition~\ref{rds} is referred to as the cocycle property.~In order to investigate random dynamical systems for~\eqref{rpde} we need the global-in-time well-posedness of~\eqref{rpde}, which is guaranteed by our assumptions ({\bf F}) and ({\bf G})~\cite{HesseNeamtu2,HN21}. Moreover working with a pathwise interpretation of the stochastic integral as given in~\eqref{Gintegral}, no exceptional sets can occur. Therefore one can immediately infer that the solution operator of~\eqref{rpde} generates a RDS. For completeness, we sketch a proof of this fact, see also~\cite[Theorem 6.5]{HesseNeamtu2}. 

\begin{lemma}
\label{cocycle} 
Let $\xi\in\cB_\alpha$ and $\textbf{W}=(W,\WW)$ be a rough path cocycle. Then the solution operator of the rough PDE~\eqref{rpde}
\begin{align*}
	t\mapsto \varphi(t,W,\xi)=Y_{t}= S_t\xi + \int\limits_{0}^{t}S_{t-r}F(Y_{r})~\txtd r 
	+ \int\limits_{0}^{t} S_{t-r}G(Y_{r})~\txtd \bm{{W}}_{r}, 
\end{align*}
generates a random dynamical system in $\cB_\alpha$ over the metric dynamical system $(\Omega_W,\cF_W,\mathbb{P},(\Theta_t)_{t\in\R})$.
\end{lemma}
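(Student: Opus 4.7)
\proof The plan is to combine the well-posedness theory from Theorem~\ref{fpr} (together with the global-in-time extension from~\cite{HN21} guaranteed by assumptions (\textbf{F}) and (\textbf{G})) with the pathwise nature of the rough integral~\eqref{Gintegral}, so that no exceptional $\mathbb{P}$-null sets enter the construction. By the fixed-point argument of Theorem~\ref{fpr} applied on successive unit intervals together with the global existence result in~\cite{HN21}, for every $\omega\in\Omega_W$ and every $\xi\in\cB_\alpha$ there is a unique controlled rough path solution $(Y,Y')\in\cD^{2\gamma}_{W,\alpha}([0,T])$ of~\eqref{mild} on each $[0,T]$, with $Y'=G(Y)$. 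Setting $\varphi(t,\omega,\xi):=Y_t$, property (i) is immediate since $\varphi(0,\omega,\xi)=S_0\xi=\xi$. Property (iii), the continuity of $\xi\mapsto\varphi(t,\omega,\xi)$, follows from the contraction estimate produced by the Banach fixed-point argument on sufficiently small time intervals (depending on $\rho_\gamma(\mathbf{W})$) and then iterated, which in particular yields local Lipschitz continuity of the solution map in $\cB_\alpha$.

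For property (ii), the cocycle identity, fix $\tau\geq 0$, $\omega\in\Omega_W$, $\xi\in\cB_\alpha$, set $Y_\cdot:=\varphi(\cdot,\omega,\xi)$ and $Z_t:=Y_{t+\tau}$ for $t\geq 0$. The key step is to verify that $(Z,G(Z))$ solves the mild equation driven by $\Theta_\tau\mathbf{W}$ with initial datum $Y_\tau$. The drift part transforms by the straightforward substitution $r\mapsto r-\tau$, while for the rough convolution I will use the rough-path-cocycle identity $\mathbf{W}_{s+\tau,t+\tau}(\omega)=\mathbf{W}_{s,t}(\Theta_\tau\omega)$ and its analogue for $\mathbb{W}$ (Lemma~\ref{shift}) to identify the compensated Riemann sums~\eqref{Gintegral} defining $\int_\tau^{t+\tau} S_{t+\tau-r}G(Y_r)\,\txtd\mathbf{W}_r$ with those of $\int_0^t S_{t-r}G(Z_r)\,\txtd(\Theta_\tau\mathbf{W})_r$. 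By uniqueness of the controlled rough path solution on $[0,t]$ with shifted noise and initial datum $Y_\tau$, this gives $Z_t=\varphi(t,\Theta_\tau\omega,Y_\tau)=\varphi(t,\Theta_\tau\omega,\varphi(\tau,\omega,\xi))$, which is precisely (ii).

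For the joint measurability of $\varphi$ I will use a standard Picard-iteration argument: the fixed-point map producing $Y$ depends only on continuous operations on the controlled rough path $\mathbf{W}$, so each iterate is jointly measurable in $(t,\omega,\xi)$, and the uniform-in-$t$ convergence on compacts together with continuity in $(t,\xi)$ yields the required $(\mathcal{B}([0,\infty))\otimes\mathcal{F}_W\otimes\mathcal{B}(\cB_\alpha),\mathcal{B}(\cB_\alpha))$-measurability.

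The main obstacle is the cocycle property~(ii): one has to verify that the pathwise construction of the rough convolution is compatible with the time shift. This is a bookkeeping argument once Lemma~\ref{shift} and Definition~\ref{rpc} are in hand, but it requires some care to check that the Gubinelli derivative also transforms correctly, so that $(Z,G(Z))$ genuinely belongs to $\cD^{2\gamma}_{\Theta_\tau W,\alpha}$ with remainder $R^Z_{s,t}=R^Y_{s+\tau,t+\tau}$, and hence that the two compensated Riemann sums indeed have the same limit before uniqueness can be invoked.
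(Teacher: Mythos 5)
Your proposal is correct and follows essentially the same route as the paper: property (ii) is obtained by rewriting $Y_{t+\tau}$ as a mild solution started at $Y_\tau$ and driven by $\Theta_\tau\mathbf{W}$, using the shift-invariance of the compensated Riemann sums (Lemma~\ref{shift}, Definition~\ref{rpc}) together with uniqueness of the controlled rough path solution. The only minor deviation is the measurability step, where you rely on joint measurability of Picard iterates, while the paper approximates the noise by smooth rough paths and uses continuous dependence of $Y$ on the rough input before applying a Castaing--Valadier argument; both are standard and equally valid.
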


\begin{proof}
	The relevant properties to define the metric dynamical system we need
	have been discussed in Example~\ref{eq:Bmain}. The cocycle property can be immediately verified, since
	\begin{align*}
	Y_{t+\tau} &=S_{t+\tau}\xi + \int\limits_{0}^{t+\tau} S_{t+\tau-r} 
	F(Y_{r}) ~\txtd r + \int\limits_{0}^{t+\tau} S_{t+\tau-r} 
	G(Y_{r}) ~\txtd \bm{{W}}_{r}\\
	& = S_t S_\tau \xi + \int\limits_{0}^{\tau}  S_{t+\tau- r} 
	F(Y_{r}) ~\txtd r + \int\limits_{\tau}^{t+\tau} S_{t+\tau-r} 
	F(Y_{r}) ~\txtd r\\
	& + \int\limits_{0}^{\tau}S_{t+\tau-r} G(Y_{r}) ~\txtd 
	\bm{{W}}_{r} + \int\limits_{\tau}^{t+\tau} S_{t+\tau-r} G(Y_{r}) 
	~\txtd \bm{{W}}_{r}\\
	&=S_t \left( S_{\tau}\xi + \int\limits_{0}^{\tau} S_{\tau-r} 
	F(Y_{r}) ~\txtd r + \int\limits_{0}^{\tau} S_{\tau-r} ~\txtd 
	\bm{{W}}_{r}  \right)\\
	& + \int\limits_{0}^{t}  S_{t-r} F(Y_{r+\tau}) ~\txtd r + 
	\int\limits_{0}^{t} S_{t-r} G(Y_{r+\tau}) ~\txtd \Theta_{\tau}
	\bm{{W}}_{r}\\
	& = S_{t} Y_{\tau} + \int\limits_{0}^{t} S_{t-r} F(Y_{r+\tau}) 
	~\txtd r + \int\limits_{0}^{t}S_{t-r}G(Y_{r+\tau}) ~\txtd 
	\Theta_{\tau}\bm{{W}}_{r}.
	\end{align*}
The above computations are rigorously justified, since one can immediately check the shift property of the rough integral~\eqref{Gintegral}. For a complete proof of this statement, see \cite[Corollary 4.5]{HesseNeamtu1}. The $(\mathcal{B}([0,\infty))\otimes\mathcal{F}_{W}\otimes\mathcal{B}(\cB_\alpha), \mathcal{B}(\cB_\alpha))$-measurability of $\varphi$ follows by well-known arguments. One considers a sequence of (classical) solutions $(Y^{n},(Y^{n})')_{n\in\mathbb{N}}$ of~\eqref{rpde} corresponding to smooth approximations $(W^{n},\mathbb{W}^{n})_{n\in\mathbb{N}}$ of $(W,\mathbb{W})$. Obviously, the mapping $(t,W,\xi)\mapsto Y^{n}_{t}$ is $(\mathcal{B}([0,T])\otimes\mathcal{F}_{W}\otimes\mathcal{B}(\cB_\alpha), \mathcal{B}(\cB_\alpha))$-measurable for any $T>0$. Since $Y$ continuously depends on the rough input $W$, according to \cite[Lemma 3.12]{GHairer}, one immediately concludes that $\lim\limits_{n\to\infty}Y^{n}_t=Y_{t}$. This gives the measurability of $Y$ with respect to $\mathcal{F}_{W}\otimes \mathcal{B}(\cB_\alpha)$. Due to the time-continuity of $Y$, we obtain by~\cite[Chapter~3]{CastaingValadier} the $(\mathcal{B}([0,T])\otimes\mathcal{F}_{W}\otimes\mathcal{B}(\cB_\alpha), \mathcal{B}(\cB_\alpha))$-measurability of the mapping $(t,\omega,\xi)\mapsto Y_{t}$ for any $t\geq 0$.\qed
\end{proof}

\textbf{Notation:}~The role of the random elements in $\Omega_W$ is played by the paths $W$ as one uses the canonical probability space of paths. So we directly denote these elements by $W$ and do not write the identification $W_t(\omega):=\omega(t)$. However, this should be kept in mind. The random dynamical system $\varphi:\mathbb{R}^{+}\times \Omega\times \cB_\alpha \to \cB_\alpha$ obviously depends upon the $t,\xi,W$, and $\mathbb{W}$ although we do not directly display the dependence upon $\mathbb{W}$ in the notation.\medskip

To construct local random invariant manifolds, which can be characterized by the graph of a smooth function in a ball with a random radius~\cite{DuanLuSchmalfuss,GarridoLuSchmalfuss} one requires the concept of tempered random variables~\cite[Chapter 4]{Arnold}, which we recall next: 

\begin{definition}\label{t}
A random variable $R:\Omega\to (0,\infty)$ is called tempered from above, with respect to a metric dynamical system $(\Omega,\mathcal{F},\mathbb{P}, (\theta_{t})_{t\in\mathbb{R}})$, if
\begin{equation}
\label{tempered}
\limsup\limits_{t\to\pm\infty}\frac{\ln^{+} R (\theta_{t}\omega)}{t}=0, \quad \mbox{ for all } \omega\in\Omega,
\end{equation}	
where $\ln^+a:=\max\left\{\ln a,0\right\}$. A random variable is called tempered from below if $1/R$ is tempered from above. A random variable is tempered if and only if it is tempered from above and from below.
\end{definition}

Note that the set of all $\omega\in\Omega$ satisfying (\ref{tempered}) 
is invariant with respect to any shift map $(\theta_{t})_{t\in\mathbb{R}}$,
which is an observation applicable to our case when $\theta_t=\Theta_t$. A 
sufficient condition for temperedness from above is according to~\cite[Prop.~4.1.3]{Arnold} 
that
\begin{equation}
\mathbb{E} \sup\limits_{t\in[0,1]} R(\theta_{t}\omega)<\infty.
\end{equation}
Moreover, if the random variable $R$ is tempered from below with 
$t\mapsto R(\theta_{t}\omega)$ continuous for all $\omega\in\Omega$, 
then for every $\tilde\delta>0$ there exists a constant $C[\tilde\delta,\omega]>0$ 
such that
\begin{equation}
\label{temperedmanradius}
R(\theta_{t}\omega) \geq C[\tilde\delta,\omega] \txte^{-\tilde\delta|t|},
\end{equation}
for any $\omega\in\Omega$. Again, for our concrete example when $\Omega=\Omega_B$
one can easily check that norms are tempered.

\begin{lemma}
\label{ltempered} 
	Let $\textbf{B}^H=(B^H,\mathbb{B}^H)$ be the rough path cocycle associated to a fractional 
	Brownian motion $B^H$ with Hurst parameter $H\in(1/3,1/2]$. Then the random variables 
	\benn
	R_1(B^H)=\|B^H\|_{\gamma}\quad \text{ and }\quad R_2(\mathbb{B}^H)=\|\mathbb{B}^H \|_{2\gamma} 
	\eenn
	are tempered from above.
\end{lemma}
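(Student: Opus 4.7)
The plan is to invoke the sufficient criterion for temperedness from above recalled just above the lemma statement, namely that $R$ is tempered from above provided
\benn
\mathbb{E}\sup_{t\in[0,1]} R(\Theta_t \textbf{B}^H)<\infty.
\eenn
So I would reduce the problem to showing that on any fixed compact interval $[a,b]$ the expected supremum over shifts $t\in[0,1]$ of $\|\Theta_t B^H\|_{\gamma,[a,b]}$ and $\|\Theta_t\mathbb{B}^H\|_{2\gamma,[a,b]^2}$ is finite.

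The key reduction is that the H\"older norms are increment-based, so the shift map translates the time interval: for $\tau\in[0,1]$ one has
\benn
\|\Theta_\tau B^H\|_{\gamma,[a,b]}
= \sup_{a\leq s<t\leq b}\frac{|B^H_{t+\tau}-B^H_{s+\tau}|}{|t-s|^\gamma}
\leq \|B^H\|_{\gamma,[a,b+1]},
\eenn
and analogously, using Chen's relation together with the fact that $\Theta_\tau \mathbb{B}^H_{s,t}=\mathbb{B}^H_{s+\tau,t+\tau}$,
\benn
\sup_{\tau\in[0,1]}\|\Theta_\tau \mathbb{B}^H\|_{2\gamma,[a,b]^2}
\leq \|\mathbb{B}^H\|_{2\gamma,[a,b+1]^2}.
\eenn
Hence the suprema are pathwise dominated by the H\"older norms of the original lift on the slightly enlarged interval $[a,b+1]$, and both right-hand sides are independent of $\tau$.

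It then suffices to prove that $\mathbb{E}\|B^H\|_{\gamma,[a,b+1]}$ and $\mathbb{E}\|\mathbb{B}^H\|_{2\gamma,[a,b+1]^2}$ are finite for $\gamma<H$. For the first level, this is classical: $B^H$ is a Gaussian process belonging almost surely to $C^\gamma$ for any $\gamma<H$, and the Garsia--Rodemich--Rumsey inequality (see~\cite[A.2]{FritzVictoir}) combined with Fernique's theorem yields finiteness of all moments of $\|B^H\|_{\gamma,[a,b+1]}$. For the second level, one uses the standard moment estimates for the rough path lift of fractional Brownian motion with $H\in(1/3,1/2]$, e.g.\ the bounds in~\cite[Chapter~10 and Chapter~15]{FritzHairer}, which give
\benn
\mathbb{E}|\mathbb{B}^H_{s,t}|^p \lesssim |t-s|^{2Hp}
\eenn
for all $p\geq 1$; a second application of Garsia--Rodemich--Rumsey to the two-parameter object $\mathbb{B}^H$ (using Chen's relation to control the off-diagonal behaviour) then produces finiteness of $\mathbb{E}\|\mathbb{B}^H\|_{2\gamma,[a,b+1]^2}^p$ for every $\gamma<H$ and every $p\geq 1$, in particular for $p=1$.

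Combining the two steps gives $\mathbb{E}\sup_{t\in[0,1]}R_i(\Theta_t\textbf{B}^H)<\infty$ for $i=1,2$, and the temperedness from above follows from the quoted sufficient condition. The only non-routine point is the joint measurability and the clean control of $\mathbb{B}^H$ in terms of Gaussian chaos of second order, but for $H\in(1/3,1/2]$ this is precisely where the known rough-path lift results apply; the rest is increment-based manipulation plus Fernique.
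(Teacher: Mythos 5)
Your argument is correct and is essentially the paper's own proof: both reduce temperedness from above to the sufficient criterion $\mathbb{E}\sup_{t\in[0,1]}R(\Theta_t\cdot)<\infty$ and then invoke the finiteness of moments of $\|B^H\|_{\gamma}$ and $\|\mathbb{B}^H\|_{2\gamma}$ for the rough path lift of fBm (the paper simply cites \cite[Theorem~10.4]{FritzHairer} for these bounds, while you spell out the shift-domination on an enlarged interval and the Garsia--Rodemich--Rumsey/Fernique background). The only cosmetic remark is that Chen's relation is not actually needed for the bound $\sup_{\tau\in[0,1]}\|\Theta_\tau\mathbb{B}^H\|_{2\gamma,[a,b]^2}\leq\|\mathbb{B}^H\|_{2\gamma,[a,b+1]^2}$, since $\Theta_\tau\mathbb{B}^H_{s,t}=\mathbb{B}^H_{s+\tau,t+\tau}$ is just a restriction of the two-parameter domain.
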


\begin{proof}
	The first assertion is valid due to the fact that 
	$\mathbb{E}\|B^{H}\|^{n}_{\gamma}<\infty$ and the second one follows regarding that
	$\mathbb{E}\|\mathbb{B}^{H}\|^{m}_{2\gamma}<\infty$, for $m\in\mathbb{N}$ 
	as contained in~\cite[Theorem~10.4]{FritzHairer}. This shows the temperedness from above
	of both random variables. \qed
\end{proof}

As before, the last result holds more generally for broader classes of Gaussian rough
paths, see~\cite[Section~10]{FritzHairer}. From now, we shall simply assume that 
$\textbf{W}=(W,\mathbb{W})$ is a  rough path cocycle such that the 
random variables
\benn
R_1(W)=\|W\|_{\gamma}\quad \text{ and }\quad R_2(\mathbb{W})=\|\mathbb{W} \|_{2\gamma}  
\eenn
are tempered from above. This concept is essential, since one wants to ensure that for initial conditions 
belonging to a ball with a sufficiently small tempered from below radius, the corresponding trajectories remain within such a ball, see~\cite{LuSchmalfuss,LianLu,CaraballoDuanLuSchmalfuss}. We state an easy fact explicitly, which is crucial in this context:

\begin{lemma}
\label{localcman}
The random variable $R$ in~\eqref{k} is tempered from below.
\end{lemma}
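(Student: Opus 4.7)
The plan is to invoke Definition~\ref{t}: $R$ is tempered from below if and only if $1/R$ is tempered from above. Because $R(W) = \min\{\widetilde R(W),1\}$ by~\eqref{r}, we have $1/R(W) = \max\{1/\widetilde R(W),1\}\ge 1$, so $\ln^+(1/R(W)) = \ln^+(1/\widetilde R(W))$ and it is enough to verify that $1/\widetilde R$ is tempered from above. From~\eqref{k} we read off the explicit formula
\[
\frac{1}{\widetilde R(W)} \;=\; \frac{C_F + C_G\, \widetilde C[\rho_\gamma(\W)]}{K},
\]
so the task becomes a question about the growth in $\rho_\gamma(\W)$ of the right-hand side under the time shift $\Theta_t$.

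The next step is to trace the origin of the constant $\widetilde C[\,\cdot\,]$. It was produced by collecting, through Theorem~\ref{integral} and Lemmas~\ref{drift:cutoff}--\ref{diffusion:cutoff} together with Corollary~\ref{cor:higherreg}, the way the contraction constant in the fixed-point argument of Theorem~\ref{fpr} depends on $\rho_\gamma(\W) = \|W\|_\gamma + \|\WW\|_{2\gamma}$. Inspecting those estimates one sees that each of them is either linear or a fixed polynomial in $\rho_\gamma(\W)$, so $\widetilde C[\,\cdot\,]$ itself is of at most polynomial growth. Consequently there exist constants $a,b,p>0$, independent of $W$, such that
\[
\ln^+\!\Big(\tfrac{1}{\widetilde R(W)}\Big) \;\le\; a \;+\; p\, \ln^+ \rho_\gamma(\W).
\]

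Replacing $W$ by $\Theta_t W$, dividing by $|t|$ and letting $|t|\to\infty$, the claim reduces to
\[
\limsup_{|t|\to\infty}\frac{\ln^+ \rho_\gamma(\Theta_t \W)}{|t|} \;=\; 0.
\]
By the standing assumption recorded just after Lemma~\ref{ltempered}, the random variables $R_1(W)=\|W\|_\gamma$ and $R_2(\WW)=\|\WW\|_{2\gamma}$ are tempered from above, and the elementary inequality $\ln^+(x+y) \le \ln 2 + \max(\ln^+ x,\ln^+ y)$ shows that a finite sum of random variables tempered from above is itself tempered from above. Applying this to $\rho_\gamma(\Theta_t \W) = R_1(\Theta_t W) + R_2(\Theta_t \WW)$ gives the required limit.

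The only point demanding real care is the bookkeeping step of the previous paragraph, namely verifying that no constant implicit in Sections~\ref{preliminaries}--\ref{sect:est:sol} introduces a super-polynomial dependence on $\rho_\gamma(\W)$; once this is checked, the rest is a straightforward application of the temperedness of the underlying rough-path norms and the shift-invariance built into Lemma~\ref{shift} and the cocycle property of $\W$.
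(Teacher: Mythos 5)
Your argument is correct and follows essentially the same route as the paper's (one-line) proof: temperedness from above of $\|W\|_\gamma$ and $\|\WW\|_{2\gamma}$ is transferred through the at most polynomial dependence of $\widetilde{C}[\rho_\gamma(\W)]$ coming from the estimates behind~\eqref{wanttohave}, which makes $\widetilde{R}$ tempered from below and hence $R=\min\{\widetilde{R},1\}$ as well. Your version merely makes explicit the bookkeeping (the $\ln^+$ manipulations and the polynomial-growth check) that the paper treats as immediate.
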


\begin{proof}
Using Lemma~\ref{ltempered} and~\ref{wanttohave} we immediately obtain that $\widetilde{C}[\rho_\gamma(\W)]$ is tempered from above. Recalling Definition~\ref{t}, we conclude that $\tilde{R}$ is tempered from below and therefore $R$ is also tempered from below.  \qed
\end{proof}

\section{Local Center Manifolds for Rough PDEs}
\label{lcm}

In this section we prove the existence of a local center manifold for~\eqref{rpde}. The technique is similar to the one employed in~\cite{KN}. The major technical difficulty is that we have to consider the fixed-point problem for the Lyapunov-Perron map in different function spaces. We state now the precise dynamical assumptions near the steady state at the origin, as shortly indicated in Section~\ref{heuristics}.
\begin{assumptions}\label{ass:linearpart} 
The spectrum of the linear operator $A$ is supposed to contain eigenvalues with zero and strictly negative real parts, i.e.~$\sigma(A)=\sigma^{\txtc}(A)\cup \sigma^{\txts}(A)$, where $\sigma^{\txtc}(A)=\{\lambda \in \sigma(A)\mbox{ : } \mbox{Re}(\lambda)=0\}$ and $\sigma^{\txts}(A)=\{\lambda\in \sigma(A) \mbox{ : } \mbox{Re}(\lambda)<0 \}$. The subspaces generated by the eigenvectors corresponding to these eigenvalues are denoted by $\cB^{\txtc}$ respectively $\cB^{\txts}$ and are referred to as {\em center} and {\em stable} subspace. These subspaces provide an invariant splitting of $\cB=\cB^{\txtc}\oplus \cB^{\txts}$.
We denote the restrictions of $A$ on $\cB^{\txtc}$ and $\cB^{\txts}$ by $A_{\txtc}:=A|_{\cB^{\txtc}}$ and $A_{\txts}:=A|_{\cB^{\txts}}$. Since $\cB^\txtc$ is finite-dimensional we obtain that $S^{\txtc}(t):=\txte^{tA_{\txtc}}$ is a {\em group} of linear operators on $\cB^{\txtc}$. Moreover, there exist projections $P^{\txtc}$ and $P^{\txts}$ such that $P^{\txtc} + P^{\txts} = \mbox{Id}_{\cB}$ and $A_{\txtc} =A|_{\cR(P^{\txtc})}$ and $A_{\txts}=A|_{\cR(P^{\txts})}$, where $\cR$ denotes the range of the corresponding projection.  In this case one can show that there is also a decomposition of the corresponding interpolation spaces $\cB_\alpha=\cB^\txtc \oplus \cB^\txts_\alpha$, where $\cB^\txts_\alpha =\cB_\alpha\cap \cB^\txts $~\cite{Simonett}. Additionally, we impose the following exponential dichotomy condition on the semigroup.
We assume that there exist two exponents $\gamma^*$ and $\beta^*$ with $-\beta^*<0\leq \gamma^*<\beta^*$ and constants $M_{\txtc},M_{\txts}\geq 1$, such that the following dichotomy condition is satisfied 
\begin{align*}
&\|S^{\txtc}(t)  x\|_{\cB} \leq M_{\txtc} \txte^{\gamma^* t} \|x\|_{\cB}, 
~~~\mbox{  for } t\leq 0 \mbox{ and } x\in \cB;\\
& \|S^{\txts}(t) x\|_{\cB} \leq M_{\txts} \txte^{-\beta^* t} \|x\|_{\cB}, 
~~\mbox{for } t\geq 0 \mbox{ and } x\in \cB.
\end{align*}
This yields according to~\cite[Theorem 2.1.3, p. 289]{Amann} a dichotomy condition also on the interpolation spaces $\cB_\alpha$ for $\alpha>0$, i.e.
\begin{align}
&\|S^{\txtc}(t)x\|_{\cB_\alpha} \leq M_{\txtc} \txte^{\gamma^* t} \|x\|_{\cB_\alpha}, 
~~~\mbox{  for } t\leq 0 \mbox{ and } x\in \cB_\alpha;\label{da}\\
& \|S^{\txts}(t) x\|_{\cB_\alpha} \leq M_{\txts} \txte^{-\beta^* t} \|x\|_{\cB_\alpha}, 
~~\mbox{for } t\geq 0 \mbox{ and } x\in \cB_\alpha. \label{sa}
\end{align}
For further details and similar assumptions ~\cite[Section~7.1, p.~460]{SellYou}. 
\end{assumptions}

\begin{remark} 
One can extend the techniques and results presented below easily if one additionally has an unstable subspace, namely if there exist eigenvalues of $A$ with real part greater than zero. In this case the classical exponential trichotomy condition is satisfied, see for instance~\cite[Section 7.1]{SellYou}. 
\end{remark}
\begin{definition}
We call a random set $\cM^{\txtc}({W})$, which is invariant 
with respect to $\varphi$ (i.e. $\varphi(t, {W},\cM^{\txtc}({W}))\subset 
\cM^{\txtc}(\Theta_{t}{W})$ for $t\in\mathbb{R}$ and ${W}\in\Omega_{W}$), a 
center manifold if this can be represented as 
\begin{align}\label{graph}
\cM^{\txtc}({W})=\{\xi + h^{\txtc}(\xi,{W})\mbox{ : }\xi\in \cB^{\txtc} \},
\end{align}
where $h^{\txtc}(\cdot,{W}):\cB^{\txtc}\to \cB^{\txts}_\alpha$ is Lipschitz continuous and differentiable in zero. Moreover, 
$h^{\txtc}(0,{W})=0$ and $\cM^{\txtc}({W})$ is tangent to $\cB^{\txtc}$ at 
the origin, meaning that the tangency condition $\txtD h^{\txtc}(0,{W})=0$ 
is satisfied.
\end{definition}
We show that~\eqref{rpde} has a local center manifold $\cM^\txtc (W)\subset \cB_\alpha$ for small initial data belonging to $\cB_\alpha$. Before constructing this local center manifold using the Lyapunov-Perron transform we further introduce the following notation. For $(U,U')\in \mathcal{D}$ we write:
\begin{align}\label{tt}
T^{\txts/\txtc}({W},U,U')[\cdot]:=  \left(\int\limits_{0}^{\cdot} S^{\txts/\txtc}_{\cdot-r} F(U) ~\txtd r + \int\limits_{0}^{\cdot} S^{\txts/\txtc}_{\cdot-r}G(U_{r}) ~\txtd \bm{{W}}_{r}  , G(U_{\cdot}) \right),
\end{align}
and 
\begin{align}
\label{hattt}
\hat{T}^{\txtc}({W}, U, U')[\cdot] := \left(\int\limits_{\cdot}^{1} 
S^{\txtc}_{\cdot- r } F(U_{r}) ~\txtd r + \int\limits_{\cdot} ^{1} 
S^{\txtc}_{\cdot -r } G(U_{r}) ~\txtd \bm{{W}}_{r}, G(U_{\cdot}) \right).
\end{align}
Given the spectral decomposition of $A$, the Lyapunov-Perron map for~\eqref{rpde} should be defined by, as discussed in Section~\ref{heuristics} and suppressing the dependence of $Y'$, as follows:
\begin{align}
\label{lp}
J({W}, Y)[\tau] & := S^{\txtc}_{\tau} \xi^{\txtc} + \int\limits_{0}^{\tau} 
S^{\txtc}_{\tau-r} F(Y_{r}) ~\txtd r + \int\limits_{0}^{\tau} S^{\txtc}_{\tau-r} G(Y_{r}) ~\txtd \textbf{W}_r\\
& +\int\limits_{-\infty}^{\tau} S^{\txts}_{\tau-r} F(Y_{r}) ~\txtd r 
+ \int\limits_{-\infty}^{\tau} S^{\txts}_{\tau-r} G (Y_{r}) 
~\txtd \textbf{W}_{r}\mbox{,  ~for } \tau\in\mathbb{R}_{-}. \nonumber
\end{align}
Since we are dealing with rough integrals and we have to control the H\"older norm of the noise on each time-interval, we have to appropriately discretize~\eqref{lp} as justified already for the rough ODE case in~\cite{KN}. Hence, we introduce a discrete version of the Lyapunov-Perron transform $J_{d}({W}, \mathbb{Y},\xi)$ for a sequence of controlled rough paths $\mathbb{Y}\in BC^{\eta}(\mathcal{D}) $ 
and $\xi\in \cB_\alpha$ as the pair $J_{d}({W},\mathbb{Y},\xi):=(J^{1}_{d}({W},\mathbb{Y},\xi), J^{2}_{d}({W},\mathbb{Y},\xi))$, where the precise structure is given below. For $t\in[0,1], ~{W}\in\Omega_{W} \mbox{ and } i\in\mathbb{Z}^{-}$ we define
\begin{align}
\label{j}
&J^{1}_{d}({W}, \mathbb{Y},\xi)[i-1,t] : = S^{\txtc}_{t+i-1} \xi^{\txtc} \\
&  -\sum\limits_{k=0}^{i+1} S^{\txtc}_{t+i-1-k} 
\left(\int\limits_{0}^{1} S^{\txtc}_{1-r} F_{R}(Y^{k-1}_{r}) ~\txtd r
+ \int\limits_{0}^{1} S^{\txtc}_{1-r}G_{R}(Y^{k-1}_{r}) ~\txtd \Theta_{k-1}
\bm{{W}}_{r}  \right)\nonumber\\
& - \int\limits_{t}^{1} S^{\txtc}_{t-r}F_{R}(Y^{i-1}_{r}) ~\txtd r 
- \int\limits_{t}^{1} S^{\txtc}_{t-r} G_{R}(Y^{i-1}_{r}) ~\txtd 
\Theta_{i-1} \bm{{W}}_{r}\nonumber\\
& + \sum\limits_{k=-\infty}^{i-1} S^{\txts}_{t+i-1-k} \left(
\int\limits_{0}^{1} S^{\txts}_{1-r} F_{R}(Y^{k-1}_{r}) ~\txtd r 
+ \int\limits_{0}^{1} S^{\txts}_{1-r}G_{R}(Y^{k-1}_{r}) ~\txtd \Theta_{k-1} 
\bm{{W}}_{r}  \right)\nonumber\\
& + \int\limits_{0}^{t} S^{\txts}_{t-r}F_{R}(Y^{i-1}_{r}) ~\txtd r
+ \int\limits_{0}^{t} S^{\txts}_{t-r} G_{R}(Y^{i-1}_{r}) ~\txtd 
\Theta_{i-1} \bm{{W}}_{r}. ~~ \nonumber
\end{align}
Furthermore, $J^{2}_{d}({W},\mathbb{Y},\xi)$ stands for the Gubinelli 
derivative of $J^{1}_{d}({W},\mathbb{Y},\xi)$, i.e.~$J^{2}_{d}({W},
\mathbb{Y},\xi)[i-1,\cdot]:=(J^{1}_{d}({W},\mathbb{Y},\xi)[i-1,\cdot] )'$. 
Note that $\xi^{\txtc}$ can be recovered setting $i=0$ and $t=1$ in the 
definition of $J^{1}_{d}({W},\mathbb{Y},\xi)$, i.e., $J^{1}_{d}({W},
\mathbb{Y},\xi)[-1,1]=\xi^{\txtc}$. The discretization of the Lyapunov-Perron map can be immediately derived using the substitution $\tau\mapsto t+i-1$ in~\eqref{lp} as computed in~\cite[Section~4.1]{KN}.
\medskip

We emphasize that for a sequence $\mathbb{Y}\in BC^{\eta}(\mathcal{D})$ 
the first index $i\in\mathbb{Z}_{-}$ in the definition of $J_{d}({W},
\mathbb{Y},\xi)[\cdot,\cdot]$ gives the position within the sequence and 
the second one refers to the time variable $t\in[0,1]$. Not to overburden 
the notation in~\eqref{j} for the elements of $\mathbb{Y}$ we simply 
write $Y^{i}_{t}$ instead of $Y[i,t]$ for $i\in\mathbb{Z}_{-}$ and $t\in[0,1]$.\medskip

\begin{remark}
	\begin{itemize}
		\item [1)] We are going to show that~\eqref{j} maps $BC^{\eta}(\mathcal{D})$ 
		into itself and is a contraction if the constant $K$ specified in~\eqref{k} is 
		chosen small enough.
		\item [2)] Compared to~\cite{KN}, several technical difficulties arise due to the fact that the controlled rough paths now incorporate different space and time regularity, recall Definition~\ref{def:crp}. Moreover, the dichotomy condition (\eqref{da} and \eqref{sa}) in the corresponding interpolation spaces is a crucial step for the following computation.
	\end{itemize}
\end{remark} 

We let $C_{S}$ stand for a 
constant which exclusively depends on the semigroup $S$ and derive:
\begin{theorem}
\label{thm:fp} Let Assumptions~\ref{ass:linearpart},
{\em ({\bf F}),~({\bf G})} hold and let $K$ satisfy the gap condition
\begin{align}
\label{gap:k}
	K C_S \left( \frac{ \txte^{\beta^*+\eta}(M_{\txts}
		\txte^{-\eta}+1)}{1-\txte^{-(\beta^*+\eta)}} + 
	\frac{\txte^{\gamma^*-\eta} (M_{\txtc}\txte^{-\eta}+1)}{1-
		\txte^{-(\gamma^*-\eta)}} \right) <\frac{1}{4}.
	\end{align}
	Then, the map $J_{d}:\Omega\times BC^{\eta}(\mathcal{D})\to 
	BC^{\eta}(\mathcal{D}) $ possesses a unique fixed-point 
	$\Gamma\in BC^{\eta}(\mathcal{D})$.
\end{theorem}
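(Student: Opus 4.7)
The plan is to apply Banach's fixed-point theorem on the complete space $BC^\eta(\cD)$, whose norm we may take as $\|\mathbb{Y}\|_{BC^\eta(\cD)} := \sup_{i\in\Z^-} \txte^{-\eta(i-1)} \|(Y^{i-1},(Y^{i-1})')\|_{\cD}$. I verify two things: (i) $J_d(W,\cdot,\xi)$ maps $BC^\eta(\cD)$ into itself; (ii) it is a strict contraction whose Lipschitz constant is bounded by the left-hand side of~\eqref{gap:k}, so that the gap condition forces the contraction coefficient to lie in $(0,1/4)$ and Banach's theorem delivers the unique fixed point $\Gamma$.

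For each fixed $i\in\Z^-$ I would estimate the $\cD$-norm of $J_d^{1}(W,\mathbb{Y},\xi)[i-1,\cdot]$ on $[0,1]$ by handling the five terms in~\eqref{j} individually. The linear term $S^\txtc_{t+i-1}\xi^\txtc$, viewed as a controlled rough path with vanishing Gubinelli derivative, has $\cD$-norm at most $M_\txtc \txte^{\gamma^*(i-1)}\|\xi^\txtc\|_{\cB_\alpha}$ using~\eqref{da} and the fact that $S^\txtc$ is a group on the finite-dimensional centre subspace (where space-Hölder norms are equivalent, so the remainder bound is automatic); this is harmless after multiplication by $\txte^{-\eta(i-1)}$ since $\eta<\gamma^*$. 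The two single-interval pieces on $[0,1]$ coincide with $T_R(\Theta_{i-1}W,Y^{i-1},(Y^{i-1})')$; applying~\eqref{wanttohave} with $\widetilde{Y}\equiv 0$ and using $F_R(0)=G_R(0)=0$ bounds them by $K\|(Y^{i-1},(Y^{i-1})')\|_{\cD}$. Because $\rho_\gamma$ is invariant under the one-unit shift by the cocycle property of Section~\ref{sect:rds}, the constant $K$ in~\eqref{wanttohave} (which depends on $\rho_\gamma(\bm{W})$ through~\eqref{k}) is genuinely uniform in $i$.

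The two tail sums over $k$ are treated identically. Each summand is $S^{\txts/\txtc}_{t+i-1-k}$ applied to a single-interval convolution whose $\cD$-norm is again controlled by $K\|(Y^{k-1},(Y^{k-1})')\|_{\cD}$ via~\eqref{wanttohave} and Corollary~\ref{cor:higherreg}. Invoking the dichotomy~\eqref{da}--\eqref{sa} on the interpolation scale, each summand gains a factor $M_\txts \txte^{-\beta^*(i-1-k)}$ for $k\le i-1$ or $M_\txtc \txte^{\gamma^*(i-1-k)}$ for $k\ge i+1$. Using the a priori bound $\|(Y^{k-1},(Y^{k-1})')\|_{\cD}\le \txte^{\eta(k-1)}\|\mathbb{Y}\|_{BC^\eta(\cD)}$, multiplying through by the weight $\txte^{-\eta(i-1)}$ and reindexing $j=|i-1-k|$ converts the two tails into geometric series with ratios $\txte^{-(\beta^*+\eta)}$ and $\txte^{-(\gamma^*-\eta)}$, both in $(0,1)$ thanks to the standing choice $-\beta^*<\eta<0\le\gamma^*$. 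Collecting constants reproduces precisely the bracketed expression in~\eqref{gap:k}, multiplied by $KC_S$. The contraction step is structurally identical: with the same $\xi$, the linear term drops out of $J_d(W,\mathbb{Y},\xi)-J_d(W,\widetilde{\mathbb{Y}},\xi)$, each convolution difference is controlled by $K\|(Y^{k-1}-\widetilde{Y}^{k-1},(Y^{k-1})'-(\widetilde{Y}^{k-1})')\|_{\cD}$ via~\eqref{wanttohave}, and the same geometric manipulation produces a Lipschitz constant that is $<1$ by~\eqref{gap:k}.

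The principal technical obstacle will be verifying that all four components of the $\cD$-norm --- the supremum of $U$ in $\cB_\alpha$, the supremum and $\gamma$-Hölder norm of $U'$ in the lower spaces, and the $2\gamma$-Hölder norm of the remainder $R^U$ in $\cB_{\alpha-2\gamma}$ --- enjoy uniform-in-$k$ geometric decay after post-composition with $S^{\txts/\txtc}_{t+i-1-k}$. For the tail terms the time-argument of this semigroup factor is bounded away from zero by at least $1$, so the smoothing bound~\eqref{hg:2} together with the interpolation inequality~\eqref{interpolation:ineq} lets the spatial loss in~\eqref{estimate:integral} be absorbed without deteriorating the exponential rates $\beta^*$ and $\gamma^*$. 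A parallel verification is needed for the Gubinelli-derivative and remainder pieces, for which the identities $(T_R(\cdot))'=G_R(Y)$ and the representation~\eqref{rem:cutoff} of the remainder are the essential inputs.
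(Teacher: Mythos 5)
Your proposal is correct and follows essentially the same route as the paper's proof: the same decomposition of $J_{d}$ into the initial-data term, the current-interval convolutions and the two tail sums, the unit-interval truncation estimate \eqref{wanttohave}, the dichotomy \eqref{da}--\eqref{sa} on the interpolation scale, and the geometric series reproducing exactly the constant in \eqref{gap:k}, with the self-mapping property obtained by setting $\widetilde{\mathbb{Y}}\equiv 0$ and using $F_R(0)=G_R(0)=0$. One small correction: the uniformity of $K$ over the intervals is not a consequence of shift-invariance of $\rho_\gamma$ (which does not hold), but of the pathwise choice of the truncation radius $R(\Theta_{k-1}W)$ through \eqref{k}, which is also how the paper implicitly argues; likewise, the tail terms need no time-argument bounded away from zero, since the value $I_1$ of each unit-interval convolution already lies in $\cB_\alpha$ by Corollary~\ref{cor:higherreg}.
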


\begin{remark}
	Note that~\eqref{gap:k} can be obtained for instance by choosing 
	the constant appearing in~\eqref{k} as 
	\begin{align}\label{gk}
	K^{-1}:= 4 C_S \txte ^{(\beta^*+\gamma^*)/2}\left(  
	\frac{\txte^{(\beta^*-\gamma^*)/2} (M_{\txts}+M_{\txtc}) 
		+1} {1-\txte^{-(\beta^*+\gamma^*)/2}} \right),
	\end{align}
	which follows by setting $\eta:=\frac{-\beta^*+\gamma^*}{2}<0$.
\end{remark}

\begin{proof}
 Let two sequences $\mathbb{Y}=((Y^{i-1}, (Y^{i-1})'))_{i\in\mathbb{Z}^{-}}$ 
	and $\mathbb{\widetilde{Y}}=((\widetilde{Y}^{i-1}, 
	(\widetilde{Y}^{i-1})'))_{i\in\mathbb{Z}^{-}}$ belong to 
	$BC^{\eta}(\mathcal{D}) $ and satisfy $P^{\txtc}Y^{-1}_{1}=
	P^{\txtc}\widetilde{Y}^{-1}_{1}=\xi^{\txtc}$. We want to verify the contraction 
	property. The fact that $J_{d}(\cdot)$ maps $BC^{\eta}(\mathcal{D})$ 
	into itself can be derived by setting $\widetilde{\mathbb{Y}}=0$ in the 
	next computation and using that $F_{R}(0)=G_{R}(0)=0$. Keeping~\eqref{g:norm} in mind we compute as in the proof of Theorem~\ref{fpr} using~\eqref{da}
	\begin{align}
	\|S^{\txtc}_{t+i-1}\xi^{\txtc},0 \|_{BC^{\eta}(\mathcal{D})}& 
	= (\|S^\txtc_{\cdot+i+1}\xi^\txtc\|_{\infty,\alpha} + \|R^{S^\txtc_{\cdot+i+1}\xi^\txtc}\|_{2\gamma,\alpha-2\gamma} ) \txte^{-\eta (i-1)}\nonumber\\
	&\leq C_S\|S^\txtc_{i+1}\xi^\txtc\|_{\cB_\alpha} \txte^{-\eta(i-1)}\nonumber \\
	&\leq C_{S}M_{\txtc} \txte^{(\gamma^*-\eta)(i-1)}\|\xi^{\txtc}\|_{\cB_\alpha}.\label{last1}
	\end{align}
More precisely, the previous computation uses for $0\leq s\leq t\leq 1$ that
	\begin{align*}
	\|S^\txtc_{t+i-1}\xi^\txtc\|_{\cB_\alpha} \leq \|S^\txtc_t\|_{\cL(\cB_\alpha,\cB_\alpha)}\|S^{\txtc }_{i+1}\xi\|_{\cB_\alpha} \leq C_S e^{\gamma^*(i-1)} \|\xi^\txtc\|_{\cB_\alpha}
\end{align*}
and  
\begin{align*}
	\|R^{S^\txtc_{\cdot+i-1}\xi^\txtc}\|_{\cB_{\alpha-2\gamma}} &=\| S^\txtc_{t+i-1} \xi - S^\txtc_{s+i-1} \xi^\txtc\|_{\cB_{\alpha-2\gamma}} = \|(S^\txtc_{t-s}-\text{Id})S^\txtc_{s+i-1}\xi^\txtc\|_{\cB_{\alpha-2\gamma}} \\
	& \leq \|S^\txtc_{t-s}-\text{Id}\|_{\cL(\cB_\alpha,\cB_{\alpha-2\gamma})} \|S^\txtc_{s+i-1}\xi^\txtc\|_{\cB_\alpha}\\
	& \leq C_S e^{\gamma^*(i-1)} (t-s)^{2\gamma} \|\xi^\txtc\|_{\cB_\alpha}.
\end{align*}
 The expression~\eqref{last1} remains bounded for $i\in\mathbb{Z}^{-}$ since we assumed that $-\beta^*<\eta<0\leq\gamma^*<\beta^*$. Next, we are going to estimate the difference
	\begin{align*}
	||J_{d}({W},\mathbb{Y},\xi) - J_{d}({W},\widetilde{\mathbb{Y}},\xi)||_{BC^{\eta}(\mathcal{D})}
	\end{align*}
	in several intermediate steps. Verifying the contraction property on the 
	stable part of~\eqref{j}, one has to compute two terms. First of all, 
	due to~\eqref{wanttohave} we get
	\begin{align*}
	& \sum\limits_{k=-\infty}^{i-1} \txte^{-\eta(i-1)}
	\Big\|\Big(S^{\txts}_{\cdot+i-1-k} \Big(T^{\txts}_{R}(\Theta_{k-1}{W}, 
	Y^{k-1}, (Y^{k-1})') [1]- T^{\txts}_{R}(\Theta_{k-1}{W}, 
	\widetilde{Y}^{k-1}, (\widetilde{Y}^{k-1})')[1]\Big),0\Big)\Big\|_{\mathcal{D}}\\
	& \leq \sum\limits_{k=-\infty}^{i-1} C_{S} M_{\txts}\txte^{-\eta(i-1)} 
	\txte^{-\beta^*(i-1-k)}  K \|Y^{k-1}-\widetilde{Y}^{k-1}, 
	(Y^{k-1}-\widetilde{Y}^{k-1})'  \|_{\mathcal{D}}\\
	& = \sum\limits_{k=-\infty}^{i-1}  C_{S}M_{\txts} \txte^{-\eta(i-1)} 
	\txte^{-\beta^*(i-1-k)} \txte^{\eta(k-1)} K \txte^{-\eta(k-1)} 
	\| Y^{k-1}-\widetilde{Y}^{k-1},(Y^{k-1} -\widetilde{Y}^{k-1})'
	\|_{\mathcal{D}}\\
	& = \sum\limits_{k=-\infty}^{i-1}   \txte^{-(\eta+\beta^*) 
		(i-1-k) } C_{S} M_{\txts} \txte^{-\eta} K \txte^{-\eta(k-1)} \| 
	Y^{k-1}-\widetilde{Y}^{k-1},(Y^{k-1} 
	-\widetilde{Y}^{k-1})'\|_{\mathcal{D}}.
	\end{align*}
	For the first part of the computation, the only time-dependence is incorporated in $S^\txts_{\cdot+i-1-k}$ and the rough integrals appearing in $T^{\txts}_{R}$ are taken from zero to one and can be estimated using Lemma~\ref{diffusion:cutoff}. This entails
	\begin{align*}
		\|I_1\|_{\cB_\alpha}:& = \|T^{\txts}_{R}(\Theta_{k-1}{W}, 
		Y^{k-1}, (Y^{k-1})') [1]- T^{\txts}_{R}(\Theta_{k-1}{W}, 
		\widetilde{Y}^{k-1}, (\widetilde{Y}^{k-1})')[1]\|_{\cB_\alpha}\\ &\lesssim_{\rho_\gamma(\W)}  \| 
		Y^{k-1}-\widetilde{Y}^{k-1},(Y^{k-1} 
		-\widetilde{Y}^{k-1})'\|_{\mathcal{D}}.
	\end{align*}
	 Regarding the structure of the controlled rough path norm given by~\eqref{g:norm} we have to estimate the $\infty$-norm of $S^{\txts}_{\cdot+i-1-k} I_1$ in $\cB_\alpha$ and the $2\gamma$-H\"older norm of the remainder of this expression in $\cB_{\alpha-2\gamma}$.
	This gives us regarding~\eqref{sa}
	\begin{align*}
	&	\Big\|\Big(S^{\txts}_{\cdot+i-1-k} \Big(T^{\txts}_{R}(\Theta_{k-1}{W}, 
		Y^{k-1}, (Y^{k-1})') [1]- T^{\txts}_{R}(\Theta_{k-1}{W}, 
		\widetilde{Y}^{k-1}, (\widetilde{Y}^{k-1})')[1]\Big),0\Big)\Big\|_{\mathcal{D}}\\
		&  \leq (\sup\limits_{t\in[0,1]}\| S^\txts_{t+i-1-k}\|_{\cL(\cB_\alpha)} +  \sup\limits_{s\in[0,1]} \|S^\txts_{t-s}-\text{Id}\|_{\cL(\cB_{\alpha},\cB_{\alpha-2\gamma})} \| S^\txts_{s+i-1-k}\|_{\cL(\cB_\alpha)}  ) \|I_1\|_{\cB_\alpha}\\
		& \leq C_S M_\txts e^{-\beta^*(i-k-1)} \| 
		Y^{k-1}-\widetilde{Y}^{k-1},(Y^{k-1} 
		-\widetilde{Y}^{k-1})'\|_{\mathcal{D}}.
	\end{align*}
	Combining the previous computation with the last term of~\eqref{j}  entails the final 
	estimate on the stable part
	\begin{align*}
	& \sum\limits_{k=-\infty}^{i-1} \txte^{-\eta(i-1)}\Big\|\Big(S^{\txts}_{\cdot+i-1-k} \Big(T^{\txts}_{R}(\Theta_{k-1}{W}, 
	Y^{k-1}, (Y^{k-1})') [1]- T^{\txts}_{R}(\Theta_{k-1}{W}, 
	\widetilde{Y}^{k-1}, (\widetilde{Y}^{k-1})')[1]\Big),0\Big)\Big\|_{\mathcal{D}}\\
	& + \txte^{-\eta(i-1)} \|T^{\txts}_{R}(\Theta_{i-1}{W}, Y^{i-1}, 
	(Y^{i-1})') [\cdot]- T^{\txts}_{R}(\Theta_{i-1}{W}, \widetilde{Y}^{i-1}, 
	(\widetilde{Y}^{i-1})')[\cdot] \|_{\mathcal{D}}\\
	& \leq \sum\limits_{k=-\infty}^{i} \txte^{-(\eta+\beta^*)(i-k-1)} 
	K \widetilde{C}(M_{\txts}\txte^{-\eta} + 1)  e ^{-\eta(k-1)}  \| Y^{k-1}-
	\widetilde{Y}^{k-1},(Y^{k-1} -\widetilde{Y}^{k-1})'\|_{\mathcal{D}}\\
	& \leq 	K C_S  \frac{ \txte^{\beta^*+\eta}(M_{\txts}\txte^{-\eta}+1)}{1
		-\txte^{-(\beta^*+\eta)}}  \|\mathbb{Y}-\widetilde{\mathbb{Y}}, \mathbb{Y}'-\widetilde{\mathbb{Y}}'
	\|_{BC^{\eta}(\mathcal{D})}.
	\end{align*}
	We focus now on the center part. Here we obtain by the same arguments as above
	\begin{align*}
	& \sum\limits_{k=0}^{i+1} \txte^{-\eta(i-1)}\Big\|\Big(S^{\txtc}_{\cdot+i-1-k} \Big(T^{\txts}_{R}(\Theta_{k-1}{W}, 
	Y^{k-1}, (Y^{k-1})') [1]- T^{\txts}_{R}(\Theta_{k-1}{W}, 
	\widetilde{Y}^{k-1}, (\widetilde{Y}^{k-1})')[1]\Big),0\Big)\Big\|_{\mathcal{D}}\\
	& \leq \sum\limits_{k=0}^{i+1}  C_{S} M_{\txtc} \txte^{-\eta (i-1)} 
	\txte^{\gamma^*(i-1-k)} K \| Y^{k-1}-\widetilde{Y}^{k-1},(Y^{k-1} 
	-\widetilde{Y}^{k-1})'\|_{\mathcal{D}}\\
	& = \sum\limits_{k=0}^{i+1}  C_{S}M_{\txtc} \txte^{-\eta(i-1)} 
	\txte^{\gamma^*(i-1-k)} \txte^{\eta(k-1)} \txte^{-\eta(k-1)} 
	K \| Y^{k-1}-\widetilde{Y}^{k-1},(Y^{k-1} -\widetilde{Y}^{k-1})'
	\|_{\mathcal{D}}\\
	& = \sum\limits_{k=0}^{i+1}  C_{S}M_{\txtc} \txte^{(\gamma^*-\eta)(i-1-k)} 
	\txte^{-\eta}  K \txte^{-\eta(k-1)}  \| Y^{k-1}-\widetilde{Y}^{k-1},
	(Y^{k-1} -\widetilde{Y}^{k-1})'\|_{\mathcal{D}}.
	\end{align*}
	Again, for the first step of the estimate we make the same deliberations as in the stable case above. 
Furthermore, combining the previous computation and estimating the third summand in~\eqref{j} yields on the center part
	\begin{align*}
	& \sum\limits_{k=0}^{i+1} \txte^{-\eta(i-1)} \Big\|\Big(S^{\txtc}_{\cdot+i-1-k} \Big(T^{\txts}_{R}(\Theta_{k-1}{W}, 
	Y^{k-1}, (Y^{k-1})') [1]- T^{\txts}_{R}(\Theta_{k-1}{W}, 
	\widetilde{Y}^{k-1}, (\widetilde{Y}^{k-1})')[1]\Big),0\Big)\Big\|_{\mathcal{D}}\\
	& + \txte^{-\eta(i-1)} ||\hat{T}^{\txtc}_{R}(\Theta_{i-1}{W}, Y^{i-1}, 
	(Y^{i-1})') [\cdot]- \hat{T}^{\txtc}_{R}(\Theta_{i-1}{W}, 
	\widetilde{Y}^{i-1}, (\widetilde{Y}^{i-1})')[\cdot] ||_{\mathcal{D}}\\
	& \leq \sum\limits_{k=0}^{i} \txte^{(\gamma^*-\eta)(i-1-k)} K C_S(M_{\txtc} 
	\txte^{-\eta} +1) \txte^{-\eta(k-1)} \| Y^{k-1}-\widetilde{Y}^{k-1},(Y^{k-1} 
	-\widetilde{Y}^{k-1})'\|_{\mathcal{D}}\\
	& \leq K C_S	 \frac{\txte^{\gamma^*-\eta} (M_{\txtc}\txte^{-\eta}+1)}{1
		-\txte^{-(\gamma^*-\eta)}} \|\mathbb{Y}-\widetilde{\mathbb{Y}}, \mathbb{Y}'-\widetilde{\mathbb{Y}}'
	\|_{BC^{\eta}(\mathcal{D})}.
	\end{align*}
	Due to~\eqref{gap:k} we have that 
	\begin{align*}
	\|J_{d}({W},\mathbb{Y},\xi) - J_{d}({W},\mathbb{\widetilde{Y}},
	\xi)\|_{BC^{\eta}(\mathcal{D})} \leq \frac{1}{4} \|\mathbb{Y} - 
	\widetilde{\mathbb{Y}}, \mathbb{Y}'-\widetilde{\mathbb{Y}}'\|_{BC^{\eta}(\mathcal{D})}.
	\end{align*}
	Applying Banach's fixed-point theorem, we infer that $J_{d}({W},
	\mathbb{Y},\xi^{\txtc})$ possesses a unique fixed-point $\Gamma(\xi^{\txtc},
	{W})\in BC^{\eta}(\mathcal{D})$ for each fixed $\xi^{\txtc}\in \cB^{\txtc}$. 
	\qed
\end{proof}\\

Theorem~\ref{thm:fp} entails the existence of  $\Gamma(\xi^{\txtc},W)\in BC^{\eta}(\mathcal{D})$ for each fixed $\xi^{\txtc}\in \cB^{\txtc}$.  We denote by $\B_{\cB^{\txtc}}(0,r({W}))$ a ball of $\cB^{\txtc}$, which is centered in $0$ and has a random  radius $r({W})$ and emphasize that the fixed point obtained in Theorem~\ref{thm:fp} characterizes the local center manifold of~\eqref{rpde}. The proof of the next statement is analogue to~\cite[Lemma~4.13]{KN}.

\begin{lemma}
\label{local:cman} 
Under the same assumptions as in Theorem~\ref{thm:fp}, there exists a tempered from below random variable $\widetilde{r}({W})$ such that the local center manifold of~\eqref{rpde} can be represented by
	\begin{equation}
	\cM^{\txtc}_{loc} ({W})=\{ \xi + h^{\txtc}(\xi,{W}) 
	: \xi\in \B_{\cB^{\txtc}}(0,r({W})) \},
	\end{equation}
	where we define
	\begin{align*}
	&	h^{\txtc}(\xi,{W}):=P^{\txts}\Gamma(\xi,{W})[-1,1]|_{\B_{\cB^{\txtc}}(0,r({W}))},
	\end{align*}
	and consequently
	\begin{align*}
	h^{\txtc}(\xi,{W}) &= \sum\limits_{k=-\infty}^{0}S^{\txts}_{-k} 
	\int\limits_{0}^{1} S^{\txts}_{1-r} P^{\txts}F(\Gamma(\xi,{W})[k-1,r]) ~\txtd r\\
	& +\sum\limits_{k=-\infty}^{0}S^{\txts}_{-k} \int\limits_{0}^{1} 
	S^{\txts}_{1-r} P^{\txts}G(\Gamma(\xi,{W})[k-1,r]) ~\txtd \Theta_{k-1}\bm{{W}}_{r}.
	\end{align*}
\end{lemma}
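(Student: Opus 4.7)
The plan is to extract $h^{\txtc}(\xi,W)$ directly from the unique fixed point $\Gamma(\xi,W)\in BC^{\eta}(\mathcal{D})$ produced by Theorem~\ref{thm:fp}. First I would set $i=0$ and $t=1$ in the discrete Lyapunov--Perron formula~\eqref{j}. By construction $J^{1}_{d}(W,\Gamma,\xi)[-1,1]=\xi^{\txtc}=\xi$, so applying the stable projection $P^{\txts}$ annihilates the center piece and all $S^{\txtc}$-summands; after reindexing the boundary block $[-1,1]$ as the $k=0$ contribution, the fourth and fifth lines of~\eqref{j} collapse to exactly the series
\begin{align*}
h^{\txtc}(\xi,W) &= \sum_{k=-\infty}^{0} S^{\txts}_{-k}\int_{0}^{1} S^{\txts}_{1-r} P^{\txts} F(\Gamma(\xi,W)[k-1,r])~\txtd r\\
&\quad + \sum_{k=-\infty}^{0} S^{\txts}_{-k}\int_{0}^{1} S^{\txts}_{1-r} P^{\txts} G(\Gamma(\xi,W)[k-1,r])~\txtd \Theta_{k-1}\W_{r},
\end{align*}
as required. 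Lipschitz continuity of $h^{\txtc}(\cdot,W):\cB^{\txtc}\to\cB^{\txts}_{\alpha}$ and Fréchet differentiability at the origin will follow from the uniformly-contractive-maps principle applied to the parameter-dependent map $J_{d}$, exploiting the contraction estimate~\eqref{wanttohave}.

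Next I would address the main obstacle: the fixed point $\Gamma$ solves the \emph{truncated} problem, and one must ensure that for every $\xi$ in the putative ball the truncation is never active, i.e.
\[\bigl\|\Gamma(\xi,W)[k-1,\cdot],\,\Gamma(\xi,W)[k-1,\cdot]'\bigr\|_{\cD}\leq R(\Theta_{k-1}W)/2\qquad \text{for all } k\leq 0,\]
so that $F_{R}$ and $G_{R}$ coincide with $F$ and $G$ along the trajectory and the formula above involves the original nonlinearities. The plan is to combine two ingredients. On one hand, the contraction estimate in Theorem~\ref{thm:fp} with $\widetilde{\mathbb{Y}}=0$, together with $F_{R}(0)=G_{R}(0)=0$ and~\eqref{last1}, will yield a bound of the form $\|\Gamma(\xi,W)[k-1,\cdot]\|_{\cD}\leq C\,\txte^{\eta(k-1)}\|\xi\|_{\cB_{\alpha}}$ with $-\beta^{*}<\eta<0$, i.e.\ exponential decay as $k\to-\infty$. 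On the other hand, Lemma~\ref{localcman} together with~\eqref{temperedmanradius} supplies, for any $\tilde\delta>0$, a random constant $C[\tilde\delta,W]>0$ such that $R(\Theta_{k-1}W)\geq C[\tilde\delta,W]\,\txte^{-\tilde\delta|k-1|}$. Fixing any $\tilde\delta\in(0,|\eta|)$ and setting $r(W):=\min\bigl\{1,\,\tfrac{1}{2C}\,C[\tilde\delta,W]\bigr\}$, which inherits temperedness from below from $R$, makes the above uniform bound hold simultaneously on every block for every $\xi\in\B_{\cB^{\txtc}}(0,r(W))$. This matching — playing the fixed Lyapunov--Perron decay rate $|\eta|$ against the (arbitrarily slow but strictly sub-exponential) temperedness rate — is the crux of the argument.

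Finally I would verify the remaining graph, invariance and tangency properties. The graph structure is immediate from $P^{\txtc}\Gamma(\xi,W)[-1,1]=\xi$ and the splitting $\cB_{\alpha}=\cB^{\txtc}\oplus\cB^{\txts}_{\alpha}$ from Assumption~\ref{ass:linearpart}. Invariance $\varphi(t,W,\cM^{\txtc}_{loc}(W))\subset\cM^{\txtc}_{loc}(\Theta_{t}W)$ will be deduced from the shift identity $\Gamma\bigl(P^{\txtc}\varphi(\tau,W,\xi_{0}),\Theta_{\tau}W\bigr)[i-1,t]=\Gamma(\xi,W)[i-1+\tau,t]$, which is a direct consequence of the rough-path cocycle property (Lemma~\ref{cocycle}) and the uniqueness of the Lyapunov--Perron fixed point; this argument transcribes almost verbatim from~\cite[Lemma~4.13]{KN}, replacing the finite-dimensional norms by $\|\cdot\|_{\cD}$. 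Tangency is easy: $\Gamma(0,W)\equiv 0$ by uniqueness (since $F_{R}(0)=G_{R}(0)=0$), hence $h^{\txtc}(0,W)=0$; moreover $\txtD F(0)=\txtD G(0)=0$ forces the linearisation of $J_{d}$ around $\xi=0$ to reduce to a pure semigroup action that preserves $\cB^{\txtc}$ and $\cB^{\txts}_{\alpha}$ separately, so differentiating the series for $h^{\txtc}$ at zero yields $\txtD h^{\txtc}(0,W)=0$.
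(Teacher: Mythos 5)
Your first step (evaluating the fixed-point identity at $[i-1,t]=[-1,1]$, applying $P^{\txts}$, reindexing the boundary block as $k=0$, and obtaining Lipschitz continuity, $h^{\txtc}(0,W)=0$, $\txtD h^{\txtc}(0,W)=0$ and invariance from uniqueness of the fixed point plus the cocycle/shift property) is exactly the route of the proof the paper refers to, namely \cite[Lemma~4.13]{KN}; note only that $J^{1}_{d}(W,\Gamma,\xi)[-1,1]$ is not equal to $\xi^{\txtc}$ — rather $P^{\txtc}\Gamma(\xi,W)[-1,1]=\xi^{\txtc}$, while its stable projection is the asserted series. The genuine gap is in what you call the crux. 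In $BC^{\eta}(\cD)$ the weight is $\txte^{-\eta(i-1)}$ with $-\beta^{*}<\eta<0$, so the fixed-point bound you invoke reads $\|\Gamma(\xi,W)[k-1,\cdot],\Gamma(\xi,W)[k-1,\cdot]'\|_{\cD}\leq C\,\txte^{\eta(k-1)}\|\xi\|_{\cB_{\alpha}}$, and since $\eta<0$ and $k-1<0$ the factor $\txte^{\eta(k-1)}=\txte^{|\eta|\,|k-1|}$ \emph{grows} exponentially as $k\to-\infty$; it is not a decay estimate. Consequently the inequality you need, $C\,\txte^{|\eta||k-1|}\|\xi\|_{\cB_\alpha}\leq \tfrac12 R(\Theta_{k-1}W)$ for all $k\leq 0$, cannot be produced from temperedness: \eqref{temperedmanradius} only yields the decaying lower bound $R(\Theta_{k-1}W)\geq C[\tilde\delta,W]\,\txte^{-\tilde\delta|k-1|}$, so for any fixed $\xi\neq 0$ the inequality fails for $|k|$ large, no matter how you choose $\tilde\delta\in(0,|\eta|)$ or shrink $r(W)$. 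Your strategy of making the cut-off inactive along the entire backward fiber therefore breaks down, and with it your justification for replacing $F_{R},G_{R}$ by $F,G$ in the series.

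The resolution in the cited proof is different: $h^{\txtc}$ is defined directly from the fixed point of the \emph{truncated} Lyapunov--Perron map (so the series is, strictly speaking, the one with the cut-off coefficients, which coincide with $F,G$ only where $\chi_{R}$ acts as the identity), and the tempered-from-below radius $r(W)$ enters solely to obtain \emph{local} invariance: using Lemma~\ref{localcman}, \eqref{temperedmanradius} and continuity of the cocycle, one chooses $r(W)$ so small that for $\xi\in\B_{\cB^{\txtc}}(0,r(W))$ the forward solution of the original equation agrees with that of the truncated equation as long as it stays in the region $\|Y,Y'\|_{\cD}\leq R(\Theta_{t}W)/2$, which gives $\varphi(t,W,\cM^{\txtc}_{loc}(W))\subset\cM^{\txtc}_{loc}(\Theta_{t}W)$ in the local sense; no statement about the backward trajectory avoiding the cut-off region is made, nor is one available. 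To repair your argument you should drop the uniform-in-$k$ de-truncation claim and instead transcribe this local-invariance step from \cite[Lemma~4.13]{KN}, keeping the truncated coefficients in the representation of $h^{\txtc}$.
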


Extending these results to continuous-time dynamical systems as discussed in~\cite{KN} one obtains:
 
\begin{theorem}
\label{manifold} 
Under the assumptions of Theorem~\ref{thm:fp}, there exists a local center manifold for~\eqref{rpde} given by the graph of the function
\begin{align*}
	h^{\txtc}(\xi,{W}) =\int\limits_{-\infty}^{0} S^{\txts}_{-r} 
	P^{\txts}F(U_{r}(\xi)) ~\txtd r + \int\limits_{-\infty}^{0} S^{\txts}_{-r} 
P^{\txts}	G(U_{r}(\xi))~\txtd \bm{{W}}_{r}.
\end{align*}
\end{theorem}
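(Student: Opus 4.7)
The plan is to bridge the discrete-time fixed point $\Gamma(\xi,{W})$ constructed in Theorem~\ref{thm:fp} and Lemma~\ref{local:cman} to a continuous-time trajectory $U_\cdot(\xi):(-\infty,0]\to \cB_\alpha$, and then recognize the telescoping series representation of $h^{\txtc}$ as a single rough integral from $-\infty$ to $0$. First I would define, for $r\in[k-1,k]$ with $k\leq 0$, the function $U_r(\xi) := \Gamma(\xi,{W})[k-1,\,r-k+1]$, viewed locally as a controlled rough path with respect to $\Theta_{k-1}{W}$ on $[0,1]$. Using the rough path cocycle identity $\mathbf{W}_{s,s+t}({W})=\mathbf{W}_{0,t}(\Theta_s{W})$ from Definition~\ref{rpc}, together with the shift-invariance of the rough integral~\eqref{Gintegral}, one verifies that these pieces glue together consistently across the integer grid points into a single controlled rough path on $(-\infty,0]$ which, by construction, satisfies the mild formulation of~\eqref{rpde} on each interval $[k-1,k]$.

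Next I would transform the series in Lemma~\ref{local:cman} into the claimed integral. Applying the substitution $r\mapsto r+k-1$ in each summand and using the semigroup composition $S^{\txts}_{-k}\,S^{\txts}_{1-r}=S^{\txts}_{-(r+k-1)}$, the drift series becomes
\begin{align*}
\sum_{k=-\infty}^{0} S^{\txts}_{-k}\int_0^1 S^{\txts}_{1-r}P^{\txts}F(\Gamma(\xi,{W})[k-1,r])\,\txtd r = \int_{-\infty}^{0} S^{\txts}_{-r}P^{\txts}F(U_r(\xi))\,\txtd r.
\end{align*}
The stochastic part is handled analogously: the additivity of the rough integral across adjacent intervals $[k-1,k]$, combined with the shift identity $\int_0^1 S^{\txts}_{1-r}G(\cdot)\,\txtd \Theta_{k-1}\mathbf{W}_r = \int_{k-1}^{k}S^{\txts}_{k-r}G(\cdot)\,\txtd \mathbf{W}_r$, allows one to assemble the series into $\int_{-\infty}^{0}S^{\txts}_{-r}P^{\txts}G(U_r(\xi))\,\txtd \mathbf{W}_r$. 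The convergence of the improper rough integral follows since $(U,U')\in BC^{\eta}(\cD)$ decays exponentially at rate $\eta\in(-\beta^*,0)$, while the stable semigroup decays at rate $-\beta^*$, together satisfying the gap condition~\eqref{gap:k}.

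Finally I would verify the qualitative properties. The identities $h^{\txtc}(0,{W})=0$ and $\txtD h^{\txtc}(0,{W})=0$ follow from uniqueness of the fixed point $\Gamma$ combined with $F(0)=\txtD F(0)=0$ and $G(0)=\txtD G(0)=0$; Lipschitz continuity in $\xi$ is inherited from the contraction estimate in Theorem~\ref{thm:fp}, while invariance of the graph $\cM^{\txtc}({W})$ under $\varphi$ is a consequence of the shift-covariance $\Gamma(\xi,\Theta_\tau{W})=\Gamma(\xi,{W})(\cdot+\tau)$, which itself stems from uniqueness of the Lyapunov-Perron fixed point and the cocycle property of $\mathbf{W}$. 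The local character of the manifold is preserved via the random radius $r({W})$ from Lemma~\ref{local:cman}, which is tempered from below by Lemma~\ref{localcman}, ensuring that the cut-off is inactive on $\cM^{\txtc}_{loc}({W})$.

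The main obstacle will be the passage from the discrete sum to the continuous rough integral: one has to check that the concatenated pair $(U,U')$ defines a bona fide element of a suitable controlled rough path space on $(-\infty,0]$, with uniform remainder and Gubinelli derivative bounds across the integer grid. This requires reconciling the $BC^{\eta}(\cD)$-norm (which is discrete in the first index) with the natural continuous $\gamma$-H\"older control on $[k-1,k]$, and propagating the dichotomy estimates~\eqref{da}–\eqref{sa} in the interpolation spaces $\cB_{\alpha}$, $\cB_{\alpha-\gamma}$, $\cB_{\alpha-2\gamma}$ simultaneously; the temperedness of $\rho_\gamma(\mathbf{W})$ under the shift $\Theta_t$ is what ultimately makes the tail integral converge in $\cB_\alpha$.
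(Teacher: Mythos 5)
Your proposal is correct and follows essentially the same route as the paper, which obtains Theorem~\ref{manifold} by reassembling the discrete Lyapunov--Perron fixed point of Theorem~\ref{thm:fp} into a continuous trajectory and rewriting the series representation of Lemma~\ref{local:cman} as improper integrals over $(-\infty,0]$ --- a passage the paper itself only sketches by referring to~\cite{KN}. One minor wording caveat: an element of $BC^{\eta}(\mathcal{D})$ with $\eta\in(-\beta^*,0)$ may \emph{grow} at rate $|\eta|$ as time tends to $-\infty$ rather than decay; the tail converges because $\beta^*+\eta>0$, which is precisely how you invoke the gap condition, so your argument stands as written.
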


\section{Examples}
\label{appl}

In this section we discuss the applicability of Theorem~\ref{manifold}. This theorem yields the existence of local center manifolds for semilinear rough parabolic PDEs once coefficients satisfy:

\begin{itemize}
\item [1)] the linear part generates an analytic $C_0$-semigroup and its spectrum satisfies~\eqref{ass:linearpart}; 
\item [2)] the drift and the diffusion coefficients $F$ and $G$ satisfy assumptions ({\bf F}) and ({\bf G}) and can be truncated in a neighborhood of the origin such that the gap condition~\eqref{gap:k} holds true, see e.g.~\cite{Caraballo}. 
\end{itemize}
It would be interesting to investigate if the techniques developed in this work can be generalized to rough quasilinear parabolic equations (see~\cite{Simonett} for a deterministic theory) based on the results established in~\cite{HocquetN}.

\begin{example}(Reaction-diffusion type equations with Dirichlet boundary conditions)\label{example1}
	We consider on a monotone scale of interpolation spaces $(\cB_\alpha)_{\alpha\in[0,1]}$, as specified below, the parabolic PDE with zero Dirichlet boundary conditions on the bounded one-dimensional domain $\cO:=[0,\pi]$
	\begin{align}\label{eq:ex:1}
	\begin{cases}
	\txtd u = (\Delta u + u +F(u))~\txtd t + G(u)~\txtd \textbf{W}_{t},\\
	u(0,t)=u(\pi,t)=0, ~\mbox{for } t \geq 0,\\
	u(x,0)=u_{0}(x)\in \cB_\alpha,~\mbox{for } x\in \cO.
	\end{cases}
	\end{align}
In contrast to Example~\ref{ex:cm:strat}, the random input  $\textbf{W}:=(W,\mathbb{W})$ is a $\gamma$-H\"older rough path, for $\gamma\in(1/3,1/2]$. In this case we can construct a Banach scale starting from the operator $Au:=\Delta_D u+u$, where $\Delta_D$ denotes the Dirichlet-Laplacian, as follows. We set $\cB:=L^p(\cO)$, for $1<p<\infty$, $\cB_1:=D(A)=W^{2,p}(\cO)\cap W^{1,p}_0(\cO)$ and $\cB_\alpha=[\cB,\cB_1]_{\alpha}=W^{2\alpha,p}_0(\cO)$, for $\alpha\in[0,1]$. Furthermore, the spectrum of $A$  is constituted by $\{1-n^{2}\mbox{ : } n\geq 1\}$  with corresponding eigenvectors $\{\sin(nx)\mbox{ : } n\geq 1\}$. These give us the
center subspace $\cB^{\txtc}:=\mbox{span}\{\sin x\}$ and the stable one  $\cB^{\txts}:=\mbox{span}\{\sin(nx)\mbox{ : } n\geq 2\}$. \\
The drift term $F:\cB_\alpha\to \cB_{\alpha-\delta}$ is supposed to be locally Lipschitz with linear growth and $G:\cB_{\alpha}\to \cB_{\alpha-\sigma}$ satisfies assumption ({\bf G}). 
Possible choices of $G$ are integral operators obtained as a convolution with a smooth kernel as considered in~\cite[Section 7]{HesseNeamtu1}. Naturally, a linear operator of the form $G(u):=g(x)(-\Delta)^{\sigma}u$ for a smooth function $g$ satisfies assumption ({\bf G}). Here $(-\Delta)^\sigma:\cB_\alpha\to \cB_{\alpha-\sigma}$ for all $\alpha\in\R$ and the multiplication with a smooth function $g$ is a smooth operation from $\cB_{\alpha-\sigma}$ into itself. In this case, we know according to~\cite[Theorem 3.9]{HN21} that~\eqref{eq:ex:1} has a global-in-time solution, therefore the center manifold theory developed in this paper covers this example.
\end{example}

\begin{remark}
Regarding~\ref{ex:cm:strat} it would be desirable to choose a dissipative cubic term for the drift, i.e. $F(u):=-au^3$, for $a>0$.
In order to ensure global-in-time existence of solutions for~\eqref{eq:ex:1}, the drift term $F$ must compensate the stochastic terms. For many classes of stochastic reaction-diffusion equations, this has been proven for additive Brownian noise~\cite{Chow}. Results regarding global-in-time existence for rough differential equations with a dissipative drift term have been obtained in~\cite{Weber}. It should be possible to extend these results to rough PDEs using energy estimates and the equivalence between weak and mild solutions~\cite[Theorem 2.18]{GHN}.	
\end{remark}


 \begin{remark}\label{ho}
 We can easily generalize the previous example to higher-order uniformly elliptic differential operators. Let $m,n\in\N$ and $\cO=[0,\pi]$ and consider
 \begin{align*}
& A u =\sum\limits_{|k|\leq 2m} a_k(x)\txtD ^k u, ~~x\in\cO\\
& \txtD^k u=0, ~~~\mbox{ on } \partial G, ~|k|<m.
 \end{align*}
 The coefficients $a_k\in C^\infty(\overline{\cO})$ and satisfy a uniform ellipticity condition, i.e.~there exists a constant $\overline{c}>0$ such that
 \begin{align*}
 (-1)^m \sum\limits_{|k|=2m} a_k(x)\xi_k\geq \overline{c} |\xi|^{2m}, ~~x\in\cO, \xi\in\R^n.
 \end{align*}
 In this case we choose the spaces $\cB=L^p(\cO)$ for $1<p<\infty$, $\cB_1=D(A)=W^{2mp}(\cO)\cap W^{m,p}_0(\cO)$ and $\cB_\alpha=[\cB,\cB_1]_\alpha=W^{2\alpha m , p }_0(\cO)$.
 	It is known that $A$ has a compact resolvent and therefore countably many eigenvalues $\{\lambda_j\}$ which have finite multiplicities and $\lambda_j\to -\infty$ as $j\to\infty$. 
 	Let $\overline{\lambda}$ be the largest negative eigenvalue of $A$. Therefore the linear operator $L:=A-\overline{\lambda}~\text{Id}$ on $\cB$ with $D(L)=D(A)$ satisfies the assumption~\ref{ass:linearpart}.
 \end{remark}

\begin{example}(Reaction-diffusion type equations on the torus)
We consider the rough PDE with periodic boundary conditions on the one dimensional torus $\mathbb{T}$
\begin{align*}
\begin{cases}
	\txtd u = (\Delta u +F(u))~\txtd t + G(u)~\txtd \textbf{W}_{t}\\
	u(0)=u_0 \in \cB.
\end{cases}
\end{align*}
 Here we work on the scale of Bessel potential spaces  $\cB=H^{k,p}(\mathbb{T})$ for $1<p<\infty$, $k>\frac{1}{p}$ and define $Au:=\Delta u $ with $\cB_1=D(A)=H^{k+2,p}(\mathbb{T})$. Therefore we obtain the scale $\cB_\alpha=H^{k+2\alpha,p}(\mathbb{T})$.
  Note that we consider here only one spatial dimension in order to ensure the gap condition.
 The one-dimensional torus is simply a circle of some given length $\mathbb{T} = R/l\mathbb{Z}$ for $l\in\R$. In this case, the spectrum of $A$ is given by $\Big\{-\Big(\frac{2\pi k}{l}\Big)^2 : k\in\mathbb{Z}\Big\}$. The eigenfunctions corresponding to $0\in\sigma(A)$ are the constant functions which build the center space $\cB^\txtc$.
 Furthermore, we consider $G(u)=p(u)$, where $p$ is a polynomial with smooth coefficients. If the degree of $p$ is greater than one, in order for $G$ to be a smooth operator acting from $\cB_\alpha\to \cB_{\alpha-\sigma}$ for $0\leq\sigma<\gamma$ and for all $\alpha\geq -2\gamma$, we need that $\cB_{-2\gamma}$ is an algebra. Here we recall that $\gamma\in(\frac{1}{3},\frac{1}{2})$ stands for the time-regularity of the rough path. In conclusion we need that $\cB_{-2\gamma}=H^{k-4\gamma,p}(\mathbb{T})$ is an algebra, which is true for $k>\frac{1}{p}+4\gamma$.
 This means that it useful to take as low as possible rough path regularity $\gamma$, as seen in~\cite{GHairer}. Again we assume that we have a dissipative drift which compensates the stochastic terms, in order to guarantee the global-in-time existence of solutions.

\end{example}

Due to its importance to bifurcation theory, see for e.g.~\cite{BloemkerHairer}, we particularly point out the following example which fits into the framework of this work, recall Remark~\ref{ho}.
\begin{example}(Swift-Hohenberg equation with periodic boundary conditions)
We consider
	\begin{align*}
	\begin{cases}
	\txtd u = [ A u + F(u) ]~\txtd t + G(u)~\txtd \textbf{W}_{t}\\
	u(0)=u_0 \in \cB,
	\end{cases}
	\end{align*}
	subject to periodic boundary conditions on the interval $[0,2\pi]$. Here $Au:= -(1+\Delta)^2 u $ and $F(u)=-u^3$.
	 We choose the function spaces $\cB=L^2_{\text{per}}((0,2\pi))$ and $D(A)=H^{4}_{\text{per}}((0,2\pi))$ where per refers to periodic functions. It is well known that $A$ generates an analytic semigroup on $\cB$ and we introduce for simplicity the interpolation spaces $\cB_\alpha=H^{4\alpha}_{\text{per}}((0,2\pi))$. The spectrum of $A$ consists of isolated eigenavlues with finite multiplicities, i.e.~$\sigma(A)=\{ -(1-n^2)^2 : n\in\N\}$.  The eigenvalue $0\in\sigma(A)$ has multiplicity two and $\{e^{ix}\}$ are the corresponding eigenfunctions. Therefore $\cB^\txtc =\text{span}\{\sin x, \cos x \}$ forms the center space.
\end{example}


\end{document}